\newtheorem{theorem}{Theorem}
\newtheorem{corollary}[theorem]{Corollary}
\newtheorem{lemma}[theorem]{Lemma}
\newtheorem{proposition}[theorem]{Proposition}
\newtheorem{remark}[theorem]{Remark}
\numberwithin{equation}{section}
\newcommand{\Z}{\mathbb{Z}}
\renewcommand{\P}{\mathbb{P}}
\newcommand{\E}{\mathbb{E}}
\newcommand{\C}{\mathcal{C}}
\newcommand{\be}{\beta}
\newcommand{\g}{\gamma}
\newcommand{\D}{\mathcal{D}}
\renewcommand{\H}{\mathbb{H}}
\newcommand{\e}{\epsilon}
\newcommand{\B}{\mathcal{B}}
\newcommand{\A}{\mathcal{A}}
\newcommand{\leqst}{\leq_{\textrm{st}}}
\newcommand{\Hw}{\mathbf{HM}_{\circ}}
\newcommand{\Hb}{\mathbf{HM}_{\bullet}}
\title{Connection probabilities and RSW-type bounds for the FK Ising model}
\author{Hugo Duminil-Copin, Cl\'ement Hongler, Pierre Nolin}
\date{}
\begin{document}

\maketitle

\begin{abstract} We prove Russo-Seymour-Welsh-type uniform bounds on crossing probabilities for the FK Ising model at criticality, independent of the boundary conditions. Our proof relies mainly on Smirnov's fermionic observable for the FK Ising model, which allows us to get precise estimates on boundary connection probabilities. It remains purely discrete, in particular we do not make use of any continuum limit, and it can be used to derive directly several noteworthy results -- some new and some not -- among which the fact that there is no spontaneous magnetization at criticality, tightness properties for the interfaces, and the existence of several critical exponents, in particular the half-plane one-arm exponent.
\end{abstract}

\section{Introduction}

It is fair to say that the two-dimensional Ising model has a very particular historical importance in statistical mechanics. This model of ferromagnetism has been the first natural model where the existence of a phase transition, a property common to many statistical mechanics models, has been proved, in Peierls' 1936 work \cite{Peierls}. In a series of seminal papers (particularly \cite{Onsager}), Onsager computed several macroscopic quantities associated with this model. Since then, the Ising model has attracted a lot of attention, and it has probably been one of the most studied models, giving birth to an extensive literature, both mathematical and physical.

A few decades later, in 1969, Fortuin and Kasteleyn introduced a dependent percolation model, for which the probability of a configuration is weighted by the number of clusters (connected components) that it contains. This percolation representation turned out to be extremely powerful to study the Ising model, and by now it has become known as the \emph{random-cluster} model, or the \emph{Fortuin-Kasteleyn percolation} -- \emph{FK percolation} for short. Recall that on a finite graph $G$, the FK percolation process with parameters $p,q$ is obtained by assigning to each configuration $\omega$ a probability proportional to
$$p^{o(\omega)} (1-p)^{c(\omega)} q^{k(\omega)},$$
where $o(\omega)$, $c(\omega)$, and $k(\omega)$ denote respectively the number of open edges, closed edges, and connected components in $\omega$. The definition of the model also involves the use of \emph{boundary conditions}, encoding connections taking place outside $G$. The boundary conditions can be seen as a set of additionnal edges between sites on the outer boundary, and they will play a central role in this article. The precise setup that we consider in this paper is presented in Section 2.

For the specific value $q=2$, the FK percolation provides a \emph{geometric representation} of the Ising model \emph{via} the \emph{Edwards-Sokal coupling} \cite{edwards-sokal}. For this reason, we restrict ourselves here to this value $q=2$, and we call this model the \emph{FK Ising model}. We also stick to the square lattice $\mathbb{Z}^2$ -- or subgraphs of it -- though our arguments could possibly be carried out in the more general context of isoradial graphs, as in \cite{CS2}. Note that our results are stated for the FK representation, but that the Edwards-Sokal coupling then allows one to translate them into results for the Ising model itself. For instance, 2-point connection probabilities for the FK Ising model correspond via this coupling to 2-spin correlation functions for the Ising model.

For the value $q=2$ and $\mathbb{Z}^2$ as an underlying graph, the model features a phase transition -- in the infinite-volume limit -- at the critical and self-dual point $p_c =  p_{\mathrm{sd}} = \frac{\sqrt{2}}{1+\sqrt{2}}$: for $p < p_c$, there is a.s. no infinite open cluster, while for $p>p_c$, there is a.s. a unique one. These two regimes, known as sub-critical and super-critical, have totally different macroscopic behaviors. Between them lies a very interesting and rich regime, the critical regime, corresponding to the value $p=p_c$. Its behavior is intimately related to the behavior of the model through its phase transition, as indicated in particular by the scaling theory.

In this paper, we prove lower and upper bounds for crossing probabilities in rectangles of bounded aspect ratio. These bounds are uniform in the size of the rectangles and in the boundary conditions, and they are analogues for the FK Ising model to the celebrated Russo-Seymour-Welsh bounds for percolation \cite{RSW1, RSW2}. Formally, we consider \emph{rectangles} $R$ on the square lattice, $\llbracket0,n\rrbracket\times\llbracket0,m\rrbracket$ for $n, m >0$, and translations of it -- here and in the following, $\llbracket\cdot,\cdot\rrbracket$ denotes the integer interval between the two (real) end-points, \emph{i.e.} the interval $[\cdot,\cdot] \cap \mathbb{Z}^2$. We denote by $\C_v(R)$ the event that there exists a \emph{vertical crossing} in $R$, a path from the bottom side $\llbracket0,n\rrbracket \times \{0\}$ to the top side $\llbracket0,n\rrbracket \times \{m\}$ which consists only of open edges. Our main result is the following:

\begin{theorem}[RSW-type crossing bounds] \label{RSW}
Let $0 < \be_1 < \be_2$. There exist two constants $0 <c_- \leq c_+<1$ (depending only on $\be_1$ and $\be_2$) such that for any rectangle $R$ with side lengths $n$ and $m \in \llbracket \be_1 n, \be_2 n\rrbracket$ (\emph{i.e.} with aspect ratio bounded away from $0$ and $\infty$ by $\be_1$ and $\be_2$), one has
$$c_- \leq \P_{ p_{\mathrm{sd}},2,R}^{\xi}(\C_v(R))\leq c_+$$
for \emph{any} boundary conditions $\xi$, where $\P_{ p_{\mathrm{sd}},2,R}^{\xi}$ denotes the FK measure on $R$ with parameters $(p,q)=( p_{\mathrm{sd}},2)$ and boundary conditions $\xi$.
\end{theorem}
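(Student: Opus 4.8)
The plan is to reduce the two-sided, boundary-condition-uniform bound to a single lower bound for free boundary conditions, and then to prove that lower bound by feeding estimates extracted from Smirnov's fermionic observable into a Russo--Seymour--Welsh gluing scheme that carefully tracks boundary conditions. \textbf{Reductions.} Since $\C_v(R)$ is an increasing event and the FK Ising model ($q=2\geq1$) enjoys the FKG inequality and monotonicity in boundary conditions, one has $\P^{\mathrm{free}}_{p_{\mathrm{sd}},2,R}(\C_v(R))\leq\P^{\xi}_{p_{\mathrm{sd}},2,R}(\C_v(R))\leq\P^{\mathrm{wired}}_{p_{\mathrm{sd}},2,R}(\C_v(R))$ for every $\xi$, so it suffices to bound the free probability from below and the wired probability from above. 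At $p_{\mathrm{sd}}$ the model is self-dual, and the event $\C_v(R)^{c}$, viewed under the wired measure on $R$, corresponds via planar duality to the existence of a horizontal dual-open crossing of the dual rectangle $R^{*}$, whose law is the FK Ising measure on $R^{*}$ with free boundary conditions; since $R^{*}$ has an aspect ratio comparable to that of $R$ (the case of bounded $n$ being handled by adjusting the constants), the wired upper bound follows from a free lower bound on crossings of rectangles of bounded aspect ratio, in both directions, and by the symmetry of $\Z^{2}$ a lower bound on vertical crossings over all aspect ratios in $[\be_1,\be_2]$ is equivalent to one on horizontal crossings. Hence the whole theorem reduces to finding $c_{-}=c_{-}(\be_1,\be_2)>0$ with $\P^{\mathrm{free}}_{p_{\mathrm{sd}},2,R}(\C_v(R))\geq c_{-}$.

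\textbf{Observable input.} The engine is Smirnov's fermionic observable $F$, which in a Dobrushin domain --- a domain with two marked boundary points, wired along one boundary arc and free along the complementary one --- is s-holomorphic, has a prescribed phase along the free arc, and is normalized at one marked point. Evaluating and summing $F$ in judiciously chosen domains (the discrete half-plane, strips, and boxes carrying a Dobrushin decoration) produces two-sided polynomial control on boundary connection probabilities: in particular a lower bound, uniform in the size, on the probability that a box equipped with such a mixed boundary condition is crossed between two prescribed boundary arcs, together with matching two-sided bounds on boundary two-point functions (of the order $|x-y|^{-1}$) and enough multiplicativity to run a second-moment (Paley--Zygmund) argument. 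These estimates hold for mixed and, via comparison between boundary conditions, for free boundary conditions.

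\textbf{From boundary estimates to RSW.} Starting from a positive lower bound on the crossing probability of a fixed-shape domain --- one robust enough to persist when the domain is bounded by an explored open path, which after conditioning plays the role of a wired boundary arc --- the remainder is the classical RSW scheme adapted to boundary conditions: condition on the lowest open crossing of a rectangle, observe that the conditional law in the region above it is an FK Ising measure with a Dobrushin-type (mixed) boundary condition, apply the observable estimates there, and thereby upgrade a crossing of a square to a crossing of a $2\times1$ rectangle in the hard direction; iterating and using FKG yields crossings of rectangles of arbitrary bounded aspect ratio with free boundary conditions, which by the reductions above completes the proof. Self-duality is used throughout to turn ``a primal crossing exists'' into ``no dual crossing exists'', which is what makes the final estimates two-sided.

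\textbf{Main obstacle.} The crux is the observable step. The fermionic observable naturally carries two-point-type information and lives on Dobrushin domains, whereas the RSW machine requires an order-one crossing probability that is robust under changes of boundary conditions; bridging this gap --- deriving the crossing lower bound for the mixed boundary condition, supplying the second-moment and multiplicativity inputs, and transferring from mixed to free boundary conditions --- is where the genuine work is concentrated. The subsequent RSW gluing, though it must be carried out attentively because of the boundary-condition bookkeeping, is by comparison routine.
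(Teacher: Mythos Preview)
Your reductions are exactly the paper's: monotonicity in boundary conditions pushes the lower bound to free and the upper bound to wired, and self-duality turns the wired upper bound into a free lower bound in the rotated rectangle. You also correctly single out Smirnov's observable and a second-moment argument as the engine.

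Where your plan diverges is in its architecture. You propose to use the observable to obtain a crossing bound for some fixed-shape Dobrushin domain, and then to run a classical RSW gluing scheme (condition on the lowest crossing, apply observable estimates in the region above, extend a square crossing to a $2\times1$ crossing, iterate). The paper does \emph{not} do this. It runs the second-moment argument \emph{directly} on the rectangle $R_n^{\beta}$ of arbitrary aspect ratio with free boundary conditions and obtains the lower bound in one shot, with no gluing at all. Concretely, let $N$ be the number of pairs $(x,u)$, $x$ on the bottom side and $u$ on the top side, with $x\leadsto u$. The observable, via a comparison of $|F|^2$ with discrete harmonic measures in the Dobrushin domain $(R_n^{\beta},u,u)$, gives $\P^0(x\leadsto u)\geq c(\beta)/n$, hence $\E^0[N]\geq c(\beta)\,n$. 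For the second moment one needs $\P(x,y\leadsto\text{top})$ for $x,y$ on the \emph{same} (bottom) side; the paper bounds this in the Dobrushin domain with wired top side by $c/\sqrt{|x-y|\,n}$, via an exploration-path decomposition by dyadic scales together with a half-plane one-arm estimate $\P(\mathcal B_k(x)\leadsto\text{wired arc})\leq c\sqrt{k/n}$, itself proved from the observable. Summing gives $\E^0[N^2]\leq c\,n^2$, and Cauchy--Schwarz yields $\P^0(\C_v)\geq c(\beta)$ for every $\beta$.

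Two remarks on your version. First, your gluing step is redundant: once the second-moment machinery is in place for one aspect ratio, nothing changes when $\beta$ varies, so running it again costs nothing and iteration is unneeded. Second, your gluing step is also where your sketch is thinnest: after conditioning on the lowest crossing, the domain above is bounded by a random curve, and ``apply the observable estimates there'' hides exactly the work that the paper's direct approach avoids. Your ``two-point function of order $|x-y|^{-1}$'' is close in spirit but not the precise input; the crucial upper bound is the same-side estimate $c/\sqrt{|x-y|\,n}$, and obtaining it (via the scale decomposition and the one-arm lemma) is where the real content of the proof lies.
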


These bounds are in some sense a first glimpse of scale invariance. It was widely believed in the physics literature that the FK Ising model at criticality, \emph{i.e.} for $p=p_c$, should possess a strong property of conformal invariance in the scaling limit \cite{P, BPZ1, BPZ2}. A precise mathematical meaning was recently established by Smirnov in a groundbreaking paper \cite{Sm3}. One of the main tools there is the so-called \emph{preholomorphic fermionic observable}, a complex observable that allows one to make appear harmonicity on the discrete level. This property can then be used to take continuum limits and describe the scaling limits so-obtained.

Our proof mostly relies on Smirnov's observable. More precisely, it is based on precise estimates on connection probabilities for boundary vertices, that allow us to use a second-moment method on the number of pairs of connected sites. For that, we use Smirnov's observable to reveal some harmonicity on the discrete level, which enables us to express macroscopic quantities such as connection probabilities in terms of discrete harmonic measures. Note in addition that other recent works \cite{BeffaraDuminilCopinSmirnov, DuminilExponentialDecay} also suggest that this complex observable is a relevant way to look at FK percolation, both for $q=2$ and for other values of $q$. We would like to stress that our argument is intended to be self-contained and that it stays completely in a discrete setting, using essentially elementary combinatorial tools: in particular, we do not make use of any continuum limits \cite{Sm4}.

\medbreak

Crossing bounds turned out to be instrumental to study the percolation model at and near its phase transition -- for instance to derive the \emph{scaling relations} \cite{Ke4}, that link the main macroscopic observables, such as the density of the infinite cluster and the characteristic length. These bounds are also useful to study variations of percolation, in particular for models exhibiting a self-organized critical behavior. We thus expect Theorem \ref{RSW} to be of particular interest to study the FK Ising model at and near criticality.

This theorem allows us to derive easily several noteworthy results. Among the consequences that we state, let us mention the celebrated fact that there is no magnetization at criticality for the Ising model, first established by Onsager in \cite{Onsager}, tightness results for the interfaces coming from the Aizenman-Burchard technology, and the value $1/2$ of the one-arm half-plane exponent -- that describes the asymptotic probability of large-distance connections starting from a boundary point, and also the decay of boundary magnetization in the Ising model. It should also be instrumental to prove the existence of critical exponents, in particular of the \emph{arm exponents}.

Theorem \ref{RSW} appears to be a very useful tool, enabling to transfer properties of the scaling limit objects back to the discrete models. Connections between discrete models and their continuum counterparts usually involve decorrelation of different scales, and thus use spatial independence between regions which are far enough from each other. In the random cluster model, one usually addresses the lack of spatial independence by successive conditionings, using repeatedly the spatial (or domain) Markov property of FK percolation, by which what happens outside a given domain can be encoded by appropriate boundary conditions. For this reason, proving bounds that are \emph{uniform in the boundary conditions} seems to be very important.

\medbreak
We would also like to mention that other proofs of Russo-Seymour-Welsh-type bounds have already been proposed. In \cite{CS2}, Chelkak and Smirnov give a direct and elegant argument to explicitly compute the crossing probabilities in the scaling limit, but their argument only applies for some specific boundary conditions (alternatively wired and free on the four sides). In \cite{CN3}, Camia and Newman also propose to obtain RSW as a corollary of a recently announced result: the convergence of the full collection of interfaces for the Ising model \cite{CS2} to the conformal loop ensemble CLE(3). The interpretation of CLE(3) in terms of the Brownian loop soup \cite{W3} is also used. However, to the author's knowledge, the proofs of these two results are quite involved, and moreover, the reasoning proposed only applies for boundary conditions ``in the bulk'', that correspond to the infinite-volume limit. In these two cases, uniformity with respect to the boundary conditions is not addressed, and there does not seem to be an easy argument to avoid this difficulty. While weaker forms might be sufficient for some applications, it seems however that this stronger form is needed in many important cases, and that it considerably shortens several existing arguments. 

\medbreak
The paper is organized as follows. In Section 2, we first remind the reader of the basic features of the FK percolation, as well as properties of Smirnov's fermionic observable. In Section 3, we compare the observable to harmonic measures, and we establish some estimates on these harmonic measures. These estimates are instrumental in the proof of Theorem \ref{RSW}, which we perform in Section 4. Finally, Section 5 is devoted to presenting the consequences that we mentioned.

\section{FK percolation background}

\subsection{Basic features of the model}

In order to remain as self-contained as possible, we recall some basic features of the random-cluster models. Some of these properties, like the Fortuin-Kasteleyn-Ginibre (FKG) inequality, are common to many statistical mechanics models. The reader can consult the reference book \cite{G_book_FK} for more details, and proofs of the results stated.

\subsubsection*{Definition of random-cluster measures}

The random-cluster (or \emph{FK percolation}) measure can be defined on any finite graph, but here we only consider finite subgraphs $G$ of the square lattice $(\mathbb{Z}^2,\mathbb{E}^2)$. We denote by $\partial G$ the boundary of such a subgraph $G$, that is, the vertices having \emph{less than four adjacent edges} -- notice that this definition is non standard. A \emph{configuration} $\omega$ is a random subgraph given by the vertices of $G$, together with some subset of edges between them. An edge of $G$ is called \emph{open} if it belongs to $\omega$, and \emph{closed} otherwise. Two sites $x$ and $y$ are said to be \emph{connected} if there is an \emph{open path} -- a path composed of open edges only -- connecting them, which is denoted by $x\leadsto y$. Similarly, two sets of vertices $X$ and $Y$ are said to be connected if there exist two sites $x \in X$ and $y \in Y$ such that $x \leadsto y$, and we use the notation $X \leadsto Y$. We also abbreviate $\{x\} \leadsto Y$ as $x \leadsto Y$. Sites can be grouped into (maximal) connected components, usually called \emph{clusters}.

Contrary to usual independent percolation, the edges in the FK percolation model are dependent of each other, a fact which makes the notion of \emph{boundary conditions} important. Formally, a set $\xi$ of boundary conditions is a set of ``abstract'' edges, each connecting two boundary vertices, that encodes how these vertices are connected outside $G$. We denote by $\omega \cup \xi$ the graph obtained by adding the new edges in $\xi$ to the configuration $\omega$.

We are now in a position to define the FK percolation measure itself, for any parameters $p \in [0,1]$ and $q\geq 1$. Denoting by $o(\omega)$ (resp. $c(\omega)$) the number of open (resp. closed) edges of $\omega$, and by $k(\omega,\xi)$ the number of connected components in $\omega \cup \xi$, the FK percolation process on $G$ with parameters $p$, $q$ and boundary conditions $\xi$ is obtained by taking 
\begin{equation} \label{def_FK}
\P_{p,q,G}^{\xi}(\{\omega\})=\frac{p^{o(\omega)}(1-p)^{c(\omega)}q^{k(\omega,\xi)}}{Z_{p,q,G}^{\xi}}
\end{equation}
as a probability for any configuration $\omega$ on $G$, where $Z_{p,q,G}^{\xi}$ is an appropriate normalizing constant, called the \emph{partition function}.

Among all the possible boundary conditions, two of them play a particular role. On the one hand, the \emph{free} boundary conditions correspond to the case when there are no extra edges connecting boundary vertices, we denote by $\P_{p,q,G}^0$ the corresponding measure. On the other hand, the \emph{wired} boundary conditions correspond to the case when all the boundary vertices are pair-wise connected, and the corresponding measure is denoted by $\P_{p,q,G}^1$.

\subsubsection*{Domain Markov property}

The different edges of an FK percolation model being highly dependent, what happens in a given domain depends on the configuration outside the domain. However, the FK percolation model possesses a very convenient property known as the \emph{Domain Markov property}, which usually makes it possible to obtain some spatial independence. This property is really instrumental in all our proofs.

Consider a graph $G$, with $E$ its set of vertices. For a subset $F \subseteq E$, consider the graph $G'$ having $F$ as a set of vertices, and the edges of $G$ connecting sites of $F$ as a set of edges. Then for any boundary conditions $\phi$, $\mathbb{P}^{\phi}_{p,q,G}$ conditioned to match some configuration $\omega$ outside $G$ is equal to $\mathbb{P}_{p,q,G'}^{\xi}$, where $\xi$ is the set of connections inherited from $\omega$. In other words, one can encode, using appropriate boundary conditions $\xi$, the influence of the configuration outside $G$.

\subsubsection*{Strong positive association and infinite-volume measures}

The random-cluster model with parameters $p \in [0,1]$ and $q \geq 1$ on a finite graph $G$ has the \emph{strong positive association property}. More precisely, it satisfies the so-called Holley criterion, a fact which has two important consequences. A first consequence is the well-known \emph{FKG inequality}
\begin{equation}
\P^{\xi}_{p,q,G}(A\cap B)\geq \P^{\xi}_{p,q,G}(A) \: \P^{\xi}_{p,q,G}(B)
\end{equation}
for any pair of \emph{increasing} events $A$, $B$ (increasing events are defined in the usual way \cite{G_book_FK}) and any boundary conditions $\xi$. This correlation inequality is fundamental to study FK percolation, for instance to combine several increasing events such as the existence of crossings in various rectangles. 

A second property implied by the strong positive association is the following monotonicity between boundary conditions, which is particularly useful when combined with the Domain Markov property. For any boundary conditions $\phi\leq \xi$ (all the connections present in $\phi$ belong to $\xi$ as well), we have
\begin{equation} \label{comparison between boundary conditions}
\mathbb{P}^{\phi}_{p,q,G}(A)\leq \mathbb{P}^{\xi}_{p,q,G}(A)
\end{equation}
for any increasing event $A$ that depends only on $G$. We say that $\mathbb{P}^{\phi}_{p,q,G}$ is stochastically dominated by $ \mathbb{P}^{\xi}_{p,q,G}$ (denoted by $\mathbb{P}^{\phi}_{p,q,G} \leqst \mathbb{P}^{\xi}_{p,q,G}$).

In particular, this property directly implies that the free and wired boundary conditions are extremal in the sense of stochastic ordering: for any set of boundary conditions $\xi$, one has
\begin{equation} \label{extremality}
\P_{p,q,G}^0 \leqst \P_{p,q,G}^{\xi} \leqst \P_{p,q,G}^1.
\end{equation}

An infinite-volume measure can be constructed as the increasing limit of FK percolation measures on the nested sequence of graphs $(\llbracket-n,n\rrbracket^2)_{n \geq 1}$ with free boundary conditions. For any fixed $q\geq 1$, classical arguments then show that there must exist a critical point $p_c=p_c(q)$ such that for any $p<p_c$, there is almost surely no infinite cluster of sites, while for $p > p_c$, there is almost surely one (see \cite{G_book_FK} for example).

\subsubsection*{Planar duality}

In two dimensions, an FK measure on a subgraph $G$ of $\mathbb{Z}^2$ with free boundary conditions can be associated with a dual measure in a natural way, as we explain now. The dual graph $G^*$ is obtained by putting a vertex at the center of each face of $\mathbb{Z}^2$ having an edge in $G$. The edges are connecting any two adjacent vertices for which the corresponding faces are separated by an edge of $G$. The FK percolation model $\P_{p,q,G}^{0}$ is then dual to the measure $\P_{p^*,q,G^*}^{1}$, where $p^*$ satisfies
\begin{equation}
\frac{p p^*}{(1-p) (1-p^*)} = q .
\end{equation}
One then expects the critical point $p_c(q)$ to be the self-dual point $p_{\textrm{sd}}(q)$ for which $p = p^*$, whose value can easily be derived:
\begin{equation}
p_{\textrm{sd}}(q)=\frac{\sqrt{q}}{1+\sqrt{q}}.
\end{equation}

\subsubsection*{FK percolation with parameter $q=2$: FK Ising model}

For the value $q=2$ of the parameter, the FK percolation model is related to the Ising model. More precisely, if starting from an FK percolation sample, one assigns uniformly at random a spin $+1$ or $-1$ to each cluster as a whole (sites in the same cluster get the same spin), independently, we get simply a sample of the Ising model. This coupling is called the \emph{Edwards-Sokal coupling} \cite{edwards-sokal}.

In this case, the FK percolation model is now well-understood. The value $p_c= p_{\mathrm{sd}}$ is implied by the computation by Kaufman and Onsager \cite{KaufmanOnsager} of the partition function of the Ising model, and an alternative proof has been proposed recently by Beffara, Duminil-Copin and Smirnov \cite{BeffaraDuminilCopinSmirnov}. Moreover, in \cite{Sm3}, Smirnov proved conformal invariance of this model at the self-dual point $ p_{\mathrm{sd}}$.

In the following, we restrict ourselves to the FK percolation model with parameters $q=2$ and $p= p_{\mathrm{sd}}(2)=\sqrt{2}/(1+\sqrt{2})$ (so that we forget the dependence on $p$ and $q$), which is also known as the \emph{critical FK Ising model} -- we often call it the FK Ising model for short. 

\subsection{Smirnov's fermionic observable}

In this part, we recall discrete analyticity and discrete harmonicity results for the FK Ising model, established by Smirnov in \cite{Sm3}. These results are crucial in our proofs since they will allow us to compare connection probabilities to harmonic measures. \emph{Recall that from now, $q=2$ and $p= p_{\mathrm{sd}}(2)$}.

\subsubsection*{FK Ising model in Dobrushin domains}

Let $\mathcal{D}$ be a \emph{finite} subgraph of the primal lattice $\mathbb{Z}^2$ such that $\partial_e\mathcal{D}$ is a self-avoiding polygon-- where $\partial_e \mathcal{D}$ is the set of edges between boundary sites. Hence, $Int(\mathcal{D})$, the connected component of $\mathbb{R}^2\setminus \partial_e\mathcal{D}$ containing the graph, is a bounded and simply connected domain. Consider two sites $a$ and $b$ of the boundary of $\mathcal{D}$. They determine two arcs of $\partial _e\mathcal{D}$, $(ab)$ and $(ba)$, obtained by following $\partial_e \mathcal{D}$ in the counterclockwise direction from $a$ to $b$, and conversely.

We consider a random cluster measure with free boundary conditions on $(ab)$, and wired boundary conditions on $(ba)$. These boundary conditions are called the \emph{Dobrushin boundary conditions} on $(\mathcal{D},a,b)$, $(ab)$ is called the \emph{free} arc and $(ba)$ the \emph{wired arc}. We denote by $\mathbb{P}_{\mathcal{D},a,b}$ the associated random cluster measure with parameters $q=2$ and $p= p_{\mathrm{sd}}(2)$. This measure has a very nice representation on the so-called medial graph of $\mathcal{D}$, which we define now.

\subsubsection*{Medial lattice and loop representation for the FK Ising model}

%We introduce different geometric objects needed to define the fermionic observable, and we hope that Figures \ref{fig:medial_lattice} and \ref{fig:loop_configuration} help to understand the definitions.
We first define the \emph{medial lattice} associated with the square lattice $\mathbb{Z}^2$. In order to do that, consider together $\mathbb{Z}^2$ with its dual $(\mathbb{Z}^2)^*$, and declare \emph{black} the sites of the primal lattice $\mathbb{Z}^2$, and \emph{white} the sites of the dual lattice $(\mathbb{Z}^2)^*$. We then introduce the graph obtained by replacing every site by a colored diamond, as on Figure \ref{fig:medial_lattice}. We obtain in this way a rotated copy of the square lattice (scaled by a factor $1/\sqrt{2}$), denoted by $(\mathbb{Z}^2)_{\diamond}$. The sites of the primal (resp. dual) lattice are thus associated with the black (resp. white) faces: we use extensively in the proof this correspondence between sites of the primal and of the dual lattices, and faces of the medial lattice. For instance, we say that two black diamonds are connected if the corresponding sites of the primal lattice are connected.

\begin{figure}
\begin{center}
\includegraphics[width=8.5cm]{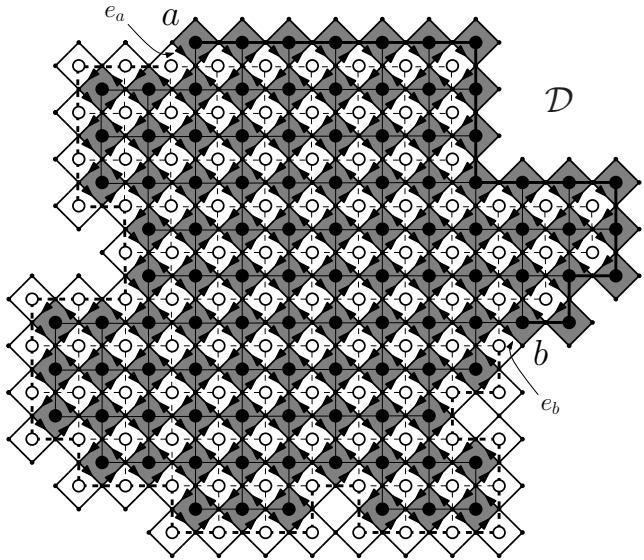}
\caption{\label{fig:medial_lattice}A domain $\mathcal{D}$ with Dobrushin boundary conditions: the vertices of the primal graph are black, the vertices of the dual graph $\mathcal{D}^*$ are white, and between them lies the medial lattice $\mathcal{D}_{\diamond}$.}
\end{center}
\end{figure}

We construct the \emph{medial graph} $\mathcal{D}_{\diamond}$ of $(\mathcal{D},a,b)$ by considering the diamonds intersecting $Int(\mathcal{D})$, together with the white diamonds touching the free arc $(ba)$ (see Figure \ref{fig:medial_lattice}). These white diamonds form the \emph{free arc} of $\mathcal{D}_{\diamond}$, the black diamonds corresponding to sites of the wired arc of $\mathcal{D}$ form the \emph{wired arc} of $\mathcal{D}_{\diamond}$. The corners of diamonds \emph{not belonging} to the two arcs of $\mathcal{D}_{\diamond}$ define the \emph{vertices} of $\mathcal{D}_{\diamond}$. The \emph{edges} are the edges of $(\mathbb{Z}^2)_{\diamond}$ between two vertices.
We adopt the following convenient convention: the two edges $e_a$ and $e_b$ of $(\mathbb{Z}^2)_{\diamond}$ (resp. adjacent to $a$ and $b$) that ``separate'' the free and the wired arcs of $\mathcal{D}_{\diamond}$ are considered as edges of $\mathcal{D}_{\diamond}$.

\begin{remark}The two definitions of arcs (for $\mathcal{D}$ and $\mathcal{D}_{\diamond}$) are quite similar. Nevertheless, the free arc of $\mathcal{D}$ is composed of sites of $\mathbb{Z}^2$ while the free arc of $\mathcal{D}_{\diamond}$ is composed of white diamonds of $\mathcal{D}_{\diamond}$. Moreover, vertices of $\mathcal{D}_{\diamond}$ possess two adjacent edges if they are ``on the boundary'' (except next to $a$ and $b$), and four otherwise.
\end{remark}

For any FK percolation configuration in $\mathcal{D}$, the interfaces between the primal clusters and the dual clusters (if we follow the edges of the medial lattice) form a family of loops, together with one path from $e_a$ to $e_b$, called the \emph{exploration path}, as shown on Figure \ref{fig:loop_configuration}. A simple rearrangement of (\ref{def_FK}) shows that the probability of such a configuration is proportional to $(\sqrt{2})^{\#\text{loops}}$ -- taking into account the fact that $q=2$ and $p=p_{\mathrm{sd}}(2)$.

In addition to this, we put an orientation on the medial graph: we orient the edges of each black face in such a way that the arrows are in counter-clockwise order. It naturally gives an orientation to the loops, so that we are now working with a model of oriented curves on the medial lattice.

\begin{remark}
If we consider a Dobrushin domain $(\mathcal{D},a,b)$, the slit domain created by ``removing'' the $T$ first steps of the exploration path is again a Dobrushin domain. More precisely, consider the new arc $l$ composed of $\partial_e\mathcal{D}$, together with the sites of $\mathcal{D}$ adjacent to the exploration path. We can define a new domain by removing all the sites of $\mathcal{D}$ which are not in the same connected component of $\mathcal{D}\setminus l$ as $b$: we obtain a new Dobrushin domain $(\mathcal{D}\setminus \gamma[0,T],\gamma(T),b)$, where, with a slight abuse of notation, $\gamma(T)$ is used to denote the site of the primal lattice adjacent to the medial edge $\gamma(T)$. The exploration path $\gamma$ is the interface between the primal open cluster connected to the wired arc and the dual open cluster connected to the free arc, so that, conditionally on $\gamma$, the law of the FK Ising model in the new domain is exactly $\mathbb{P}_{\mathcal{D}\setminus \gamma[0,T],\gamma(T),b}$. This observation will be instrumental in our proof.
\end{remark}

\begin{figure}
\begin{center}
\includegraphics[width=8cm]{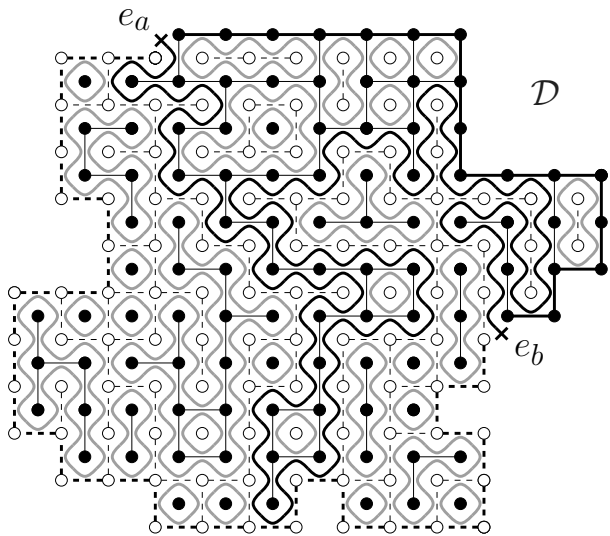}
\caption{\label{fig:loop_configuration}An FK percolation configuration in the Dobrushin domain $(\mathcal{D},a,b)$, together with the corresponding interfaces on the medial lattice: the loops in grey, and the exploration path $\gamma$ from $e_a$ to $e_b$ in black.}
\end{center}
\end{figure}

\subsubsection*{Fermionic observable and local relations}

Let $(\D,a,b)$ be a Dobrushin domain and $\g$ the exploration path from $e_a$ to $e_b$. The \emph{winding} $W_{\Gamma}(z,z')$ of a curve $\Gamma$ between two edges $z$ and $z'$ of the medial lattice is the total rotation (in radians) that the curve makes from the center of the edge $z$ to the center of the edge $z'$. The \textit{fermionic observable} $F$ can now be defined by the formula \cite{Sm3}
\begin{equation} \label{observable_F}
F(e)=\mathbb{E}_{\D,a,b}[{\rm e}^{-\frac{1}{2}\cdot{\rm i} W_{\gamma}(e_a,e)} \mathbb{I}_{e\in \gamma}],
\end{equation}
for any edge $e$ of the medial lattice $\mathcal{D}_{\diamond}$. The constant $\sigma=1/2$ appearing in front of the winding is called the \emph{spin} (see \cite{Sm3}). 

The quantity $F(e)$ is a complexified version of the probability that $e$ belongs to the exploration path (note that it is defined on the medial graph $\mathcal{D}_{\diamond}$). The complex weight makes the link between $F$ and probabilistic properties less explicit. Nevertheless, as we will see, the winding term can be controlled close to the boundary. The observable $F$ also satisfies the following local relation, from which Propositions \ref{definitionH} and \ref{subsupharm} follow. 

\begin{lemma}[\cite{Sm3}]\label{Cauchy-Riemann}
For any vertex $v$ of the medial lattice $\D_{\diamond}$ with \emph{four adjacent edges} in $\mathcal{D}_{\diamond}$, the relation
\begin{equation} \label{eq:cauchy_riemann}
F(e_1)+F(e_3) = F(e_2)+F(e_4)
\end{equation}
is satisfied, where $e_1$, $e_2$, $e_3$ and $e_4$ are the four edges at $v$ indexed in clockwise order, as on Figure \ref{fig:cauchy_riemann}.
\end{lemma}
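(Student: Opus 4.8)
The plan is to establish the relation \eqref{eq:cauchy_riemann} by a pairing argument on FK percolation configurations, exploiting the local structure of the loop representation at $v$. Recall that a vertex $v$ of $\D_{\diamond}$ with four incident edges sits on a single edge $e$ of $\Z^2$ (or of its dual), and that the state (open or closed) of $e$ determines which of the two \emph{planar} pairings of the four medial half-edges at $v$ is realised by the interfaces: either $\{e_1e_2,\,e_3e_4\}$ or $\{e_2e_3,\,e_4e_1\}$ -- the crossing pairing $\{e_1e_3,\,e_2e_4\}$ never occurs. Let $s$ be the involution on configurations that flips the state of $e$; it modifies the loops and the exploration path only inside a neighbourhood of $v$. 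Since $F(e_j)=\E_{\D,a,b}[{\rm e}^{-\frac12{\rm i}W_{\g}(e_a,e_j)}\mathbb{I}_{e_j\in\g}]$ is a sum over configurations and \eqref{eq:cauchy_riemann} is linear, it suffices to split that sum into orbits $\{\omega,s(\omega)\}$ of $s$ and show that each orbit contributes zero, i.e.
\begin{equation*}
\P_{\D,a,b}[\omega]\,\Delta(\omega)+\P_{\D,a,b}[s(\omega)]\,\Delta(s(\omega))=0,
\end{equation*}
where $\Delta(\omega):=\sum_{j\in\{1,3\}}{\rm e}^{-\frac12{\rm i}W_{\g}(e_a,e_j)}\mathbb{I}_{e_j\in\g(\omega)}-\sum_{j\in\{2,4\}}{\rm e}^{-\frac12{\rm i}W_{\g}(e_a,e_j)}\mathbb{I}_{e_j\in\g(\omega)}$.

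If $\g$ stays away from $v$ in $\omega$ (equivalently, in $s(\omega)$), then $\Delta(\omega)=\Delta(s(\omega))=0$, so I would reduce to the case where $\g$ passes through $v$. In that situation $\g$ uses, through $v$, one or both of the two strands prescribed by the current local pairing, hence uses a union of \emph{consecutive} pairs among $e_1,e_2,e_3,e_4$; the useful observation is that each consecutive pair $\{e_i,e_{i+1}\}$ contains exactly one of $e_1,e_3$ and exactly one of $e_2,e_4$, so it enters $\Delta$ with opposite signs. The remaining work is a finite case analysis on which strand(s) of $\g$ visit $v$ in $\omega$: passing to $s(\omega)$ switches to the complementary pairing at $v$, which -- depending on the global connections away from $v$ -- either absorbs an adjacent loop into $\g$, detaches a loop from $\g$, or simply exchanges the roles of the two strands of $\g$; in each case I would record precisely which of $e_1,\dots,e_4$ lie on $\g$ before and after.

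The two quantitative inputs are as follows. First, each time $\g$ turns at $v$ from a medial edge onto a consecutive one, its winding changes by exactly $\pm\pi/2$, so -- because the spin equals exactly $\sigma=\tfrac12$ -- the weight ${\rm e}^{-\frac12{\rm i}W_{\g}(e_a,\cdot)}$ is multiplied by the primitive eighth root of unity ${\rm e}^{\mp{\rm i}\pi/4}$; in addition, a loop that is absorbed into, or detached from, $\g$ when passing to $s(\omega)$ has total winding $\pm2\pi$, hence contributes an extra factor ${\rm e}^{\mp{\rm i}\pi}=-1$. Second, flipping the state of $e$ multiplies $p^{o(\omega)}(1-p)^{c(\omega)}$ by $\big(\tfrac{1-p}{p}\big)^{\pm1}$ and changes the number of loops by $\pm1$, so it multiplies $\P_{\D,a,b}[\omega]$ by $\big(\tfrac{1-p}{p}\big)^{\pm1}(\sqrt2)^{\pm1}$ -- and here the self-duality $p=p_{\mathrm{sd}}(2)$ enters, since then $\tfrac{p}{1-p}=\sqrt2$, making this ratio a power of $\sqrt2$. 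Combining these, in each orbit the eighth-roots of unity collected at the turns at $v$, the sign $-1$ from any loop transferred between $\g$ and the loop family, and the power of $\sqrt2$ relating $\P_{\D,a,b}[\omega]$ to $\P_{\D,a,b}[s(\omega)]$ conspire to make $\P_{\D,a,b}[\omega]\,\Delta(\omega)+\P_{\D,a,b}[s(\omega)]\,\Delta(s(\omega))$ vanish.

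The main obstacle is precisely this last bookkeeping of signs and phases through all the cases: one must fix orientation conventions once and for all (the counter-clockwise orientation of the medial edges around black faces, and the orientation of $\g$ from $e_a$ to $e_b$ that defines the windings), compute the four local winding increments at $v$, and verify that in every case the eighth-roots of unity, the loop sign, and the weight ratio combine to yield exactly $F(e_1)+F(e_3)=F(e_2)+F(e_4)$ rather than some other linear identity -- it is exactly the values $\sigma=\tfrac12$ and $p=p_{\mathrm{sd}}(2)$ that make this cancellation occur. Finally, the hypothesis that $v$ has four incident edges in $\D_{\diamond}$ excludes the boundary vertices and the distinguished edges $e_a,e_b$, so no separate boundary discussion is needed here.
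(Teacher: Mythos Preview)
Your approach is exactly the one the paper defers to Smirnov: the edge-flip involution $s$ at $v$, followed by the verification that each orbit $\{\omega,s(\omega)\}$ contributes zero to $F(e_1)+F(e_3)-F(e_2)-F(e_4)$, via the finite case analysis on how $\gamma$ passes through $v$.

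One correction to your second ``quantitative input'': the weight-ratio formula double-counts. At $p=p_{\mathrm{sd}}(2)$ the paper states that the configuration weight is simply proportional to $\sqrt{2}^{\,\#\text{loops}}$ --- the edge factors $p^{o}(1-p)^{c}$ have already been absorbed when passing from the FK weight $p^{o}(1-p)^{c}q^{k}$ to the loop weight. Hence flipping $e$ multiplies $\P_{\D,a,b}[\omega]$ by $\sqrt{2}^{\pm 1}$ alone, not by an extra $\big(\tfrac{1-p}{p}\big)^{\pm 1}$. With your formula the ratios come out as $1$ or $2^{\pm 1}$, and the phases will not match in the case analysis; with the correct ratio $\sqrt{2}^{\pm 1}$ they do. Apart from this slip, your outline is the standard proof.
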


\begin{figure}
\begin{center}
\includegraphics[width=3cm]{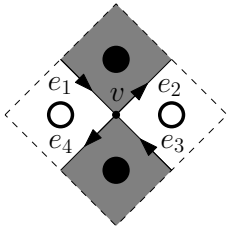}
\caption{\label{fig:cauchy_riemann}The discrete relation at a vertex $v$.}
\end{center}
\end{figure}

We refer to \cite{Sm3} or \cite{BeffaraDuminilCopinSmirnov} for the proof of this result. The key ingredient is a bijection between configurations that contribute to the values of $F$ at the vertices around $v$. Note that for other values of $q$, one can still define the fermionic observable in a way similar to Eq.(\ref{observable_F}): for an appropriate value $\sigma = \sigma(q)$ of the spin, the previous relation Eq.(\ref{eq:cauchy_riemann}) still holds (see \cite{Sm3}, \cite{BeffaraDuminilCopinSmirnov}, or \cite{DuminilExponentialDecay}).

\subsubsection*{Complex argument of the fermionic observable $F$ and definition of $H$}

Due to the specific value of the spin $\sigma=1/2$, corresponding to the value $q=2$, the complex argument modulo $\pi$ of the fermionic observable $F$ follows from its definition Eq.(\ref{observable_F}). For instance, if the edge $e$ points in the same direction as the starting edge $e_a$, then the winding is a multiple of $2\pi$, so that the term ${\rm e}^{-\frac{{\rm i}}{2} W_{\gamma}(e_a,z)}$ is equal to $\pm 1$, and $F(e)$ is purely real. The same reasoning can be applied to any edge to show that it belongs to the line ${\rm e}^{{\rm i}\pi/4}\mathbb{R}$, ${\rm e}^{-{\rm i}\pi/4}\mathbb{R}$ or ${\rm i}\mathbb{R}$ depending on its direction. Contrary to Lemma \ref{Cauchy-Riemann}, this property is very specific to the FK Ising model.

For a vertex $v$ with \emph{four} adjacent edges, keeping the same notations as in the previous subsection, $F(e_1)$ and $F(e_3)$ are always orthogonal (for the scalar product between complex numbers $(a,b)\mapsto \Re e(a\bar{b})$), as well as $F(e_2)$ and $F(e_4)$, so that Eq.(\ref{eq:cauchy_riemann}) can be rewritten as
\begin{equation} \label{relationH1}
\left|F(e_1)\right|^2+\left|F(e_3)\right|^2=\left|F(e_2)\right|^2+\left|F(e_4)\right|^2.
\end{equation}

Consider now a vertex $v$ with \emph{two} adjacent edges of $\mathcal{D}_{\diamond}$, and denote by $e_5$ the ``entering'' edge, and $e_6$ the ``exiting'' edge. Such a vertex must be on the boundary of the domain, and $e_5$ belongs to $\g$ if and only if $e_6$ belongs to $\g$ -- indeed, by construction, the curve entering through $e_5$ must leave through $e_6$. Moreover, the windings of the curve $W_{\g}(e_a,e_5)$ and $W_{\g}(e_a,e_6)$ are constant since $\g$ cannot wind around these edges. From these two facts, we deduce:
\begin{equation} \label{relationH2}
|F(e_5)|^2=\left|{\rm e}^{-\frac{{\rm i}}{2}W_{\g}(e_a,e_5)}\mathbb{P}_{\mathcal{D},a,b}(e_5\in \g)\right|^2=\mathbb{P}_{\mathcal{D},a,b}(e_5\in \g)^2=|F(e_6)|^2.
\end{equation}

From Eqs.(\ref{relationH1}) and (\ref{relationH2}), one can easily prove the following proposition.
\begin{proposition}[\cite{Sm3}]\label{definitionH}There exists a \emph{unique function} $H$ defined on the faces of $\D_{\diamond}$ by the relation
\begin{equation} \label{def_H}
H(B)-H(W)=\left|F(e)\right|^2,
\end{equation}
for any two neighboring faces $B$ and $W$, respectively black and white, separated by the edge $e$, and by fixing the value $1$ on the black face corresponding to $a$. Moreover, $H$ is then automatically equal to $1$ on the black diamonds of the wired arc, and equal to $0$ on the white diamonds of the free arc.
\end{proposition}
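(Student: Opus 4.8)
The plan is to define $H$ directly by its prescribed increments and then verify consistency. Let $\mathcal{G}$ be the graph whose vertices are the faces of $\D_\diamond$ and whose edges join the two faces that share an edge of $\D_\diamond$; every edge of $\mathcal{G}$ thus joins a black face $B$ to a white face $W$ across a well-defined medial edge $e$. Since $Int(\D)$ is simply connected, $\mathcal{G}$ is connected, so prescribing the increment $H(B)-H(W)=|F(e)|^2$ across each edge together with the value $1$ at the black face of $a$ determines $H$ uniquely, \emph{provided} the increments are consistent, i.e.\ the signed sum of increments vanishes around every cycle of $\mathcal{G}$. By planarity this reduces to checking the condition around the bounded faces of $\mathcal{G}$.

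Each bounded face of $\mathcal{G}$ encircles exactly one interior vertex $v$ of $\D_\diamond$, that is, a vertex with four incident edges $e_1,\ldots,e_4$ (clockwise) and four surrounding faces alternating in colour. Running once around the corresponding $4$-cycle of $\mathcal{G}$, the signed sum of increments is $\pm\bigl(|F(e_1)|^2-|F(e_2)|^2+|F(e_3)|^2-|F(e_4)|^2\bigr)$, which vanishes by (\ref{relationH1}) --- itself a consequence of Lemma~\ref{Cauchy-Riemann} together with the orthogonality of $F(e_1),F(e_3)$ and of $F(e_2),F(e_4)$. A boundary vertex has only two incident edges in $\D_\diamond$ and therefore bounds no face of $\mathcal{G}$, imposing no condition. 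Hence $H$ is well defined and unique.

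It remains to read off the boundary values. Two consecutive black diamonds $B,B'$ of the wired arc are the two black faces around a common boundary medial vertex $v$, whose two edges $e_5,e_6$ of $\D_\diamond$ separate $B$ and $B'$ from one common white face $W$; relation (\ref{relationH2}) gives $|F(e_5)|^2=|F(e_6)|^2$, hence $H(B)=H(W)+|F(e_5)|^2=H(W)+|F(e_6)|^2=H(B')$. So $H$ is constant along the wired arc, and since $a$ lies on it with $H(a)=1$, we get $H\equiv 1$ there. The same argument shows $H$ is constant on the white diamonds of the free arc; and the white diamond $W_a$ adjacent to the black face of $a$ across the special edge $e_a$ satisfies $H(W_a)=H(a)-|F(e_a)|^2=1-1=0$, because $e_a\in\g$ always and $W_\g(e_a,e_a)=0$, so $F(e_a)=1$ by (\ref{observable_F}). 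Hence $H\equiv 0$ on the free arc.

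The one genuinely delicate point is the topological bookkeeping --- arguing that local consistency around interior medial vertices forces global consistency, which uses the simple connectedness of $Int(\D)$, and dealing carefully with the faces and edges near the boundary, in particular the special edges $e_a$ and $e_b$. Everything else is immediate from the local identities (\ref{relationH1}) and (\ref{relationH2}).
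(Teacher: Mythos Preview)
Your proof is correct and follows exactly the approach the paper indicates: the paper does not spell out a proof but simply states that the proposition ``can easily be proved'' from Eqs.~(\ref{relationH1}) and (\ref{relationH2}), and your argument does precisely this --- using (\ref{relationH1}) for the consistency of the increments around interior medial vertices, and (\ref{relationH2}) together with $F(e_a)=1$ to obtain constancy along the arcs and the correct boundary values.
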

This function $H$ is a discrete analogue of the antiderivative of $F^2$, as explained in Remark 3.7 of \cite{Sm3}.

\subsubsection*{Approximate Dirichlet problem for $H$}

Let us denote by $H_{\bullet}$ and $H_{\circ}$ the restrictions of $H$ respectively to the black faces and to the white faces. For a black site of $\mathcal{D}$ not on the boundary, we can consider the usual \emph{discrete Laplacian} (on the graph $\mathcal{D}$) at this site: it is the average on the four nearest black neighbors, minus the value at the site itself. A similar definition holds for white sites of the graph $\mathcal{D}^*$.

The result below, proved in \cite{Sm3}, is a key step to prove convergence of the observable as one scales the domain -- but we will not discuss this question here. Its proof relies on an elementary yet quite lengthy computation. 

\begin{proposition}[\cite{Sm3}]\label{subsupharm}
The function $H_{\bullet}$ (resp. $H_{\circ}$) is \emph{subharmonic} (resp. \emph{superharmonic}) inside the domain for the discrete Laplacian.
\end{proposition}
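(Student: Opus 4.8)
The plan is to work locally at a black site $v$ of $\mathcal{D}$ with all four incident edges in $\mathcal{D}$, and to express the discrete Laplacian $\Delta H_{\bullet}(v) = \tfrac14\sum_{i=1}^4 H_\bullet(v_i) - H_\bullet(v)$ purely in terms of the values $|F(e)|^2$ on the eight medial edges surrounding $v$. Concretely, $v$ as a black face of $\mathcal{D}_\diamond$ is adjacent to four white faces $W_1,\dots,W_4$ (the dual sites around $v$) and, one step further out, to the four black faces $v_1,\dots,v_4$; each nearest pair is separated by a single medial edge, and by Proposition \ref{definitionH} the increments $H(B)-H(W)=|F(e)|^2$ telescope along any path in the face-adjacency graph of $\mathcal{D}_\diamond$. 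So $H_\bullet(v_i)-H_\bullet(v)$ is, for each $i$, a signed sum of two terms $|F(e)|^2$ (one going $v\to W$, one going $W\to v_i$); summing over $i$ expresses $4\,\Delta H_\bullet(v)$ as an explicit $\pm$ combination of the $|F(e)|^2$ over the edges incident to $v$ and to its white neighbours.

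The second ingredient is to rewrite each $|F(e)|^2$ in terms of the actual complex values $F(e)$ at the four edges $e_1,e_2,e_3,e_4$ meeting at $v$ (and at the analogous vertices around $W_1,\dots,W_4$), using the two structural facts established before the statement: the Cauchy–Riemann relation $F(e_1)+F(e_3)=F(e_2)+F(e_4)$ of Lemma \ref{Cauchy-Riemann}, and the orthogonality $F(e_1)\perp F(e_3)$, $F(e_2)\perp F(e_4)$ coming from the argument-modulo-$\pi$ rigidity special to $q=2$ (which gave Eq.~\eqref{relationH1}). Writing each $F(e_j)$ in the form $\rho_j e^{\mathrm{i}\theta_j}$ with the $\theta_j$ constrained to the four allowed directions $\{0,\pi/4,\pi/2,-\pi/4\}\pmod\pi$, one can parametrize the local configuration of $F$ by a few real numbers and then carry out the (lengthy but entirely elementary) algebra to show that the resulting combination $\Delta H_\bullet(v)$ is $\geq 0$. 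The white case $H_\circ$ is handled identically after swapping the roles of black and white faces, the direction of the increment in \eqref{def_H} reversing, which is exactly what turns subharmonicity into superharmonicity.

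The key computational lemma one is really proving is a pointwise inequality of the shape: given four complex numbers obeying $F(e_1)+F(e_3)=F(e_2)+F(e_4)$ together with the orthogonality constraints and the direction constraints, a certain explicit quadratic form in the $|F(e_j)|$ is nonnegative; expanding $|F(e_j)|^2$ and collecting the cross terms $\Re e(F(e_i)\overline{F(e_k)})$ (many of which vanish by orthogonality) reduces it to something like a sum of squares. I expect this algebraic identity — keeping careful track of which medial edges point in which of the three directions $e^{\mathrm{i}\pi/4}\mathbb{R}$, $e^{-\mathrm{i}\pi/4}\mathbb{R}$, $\mathrm{i}\mathbb{R}$, and hence which $F$-values are mutually orthogonal — to be the main obstacle: it is the ``elementary yet quite lengthy computation'' alluded to in the statement, and it is where the special value $\sigma=1/2$ (equivalently $q=2$) is genuinely used. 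Since this is precisely the content of the corresponding computation in \cite{Sm3}, I would present the bookkeeping for one representative vertex type in detail and indicate that the remaining cases (vertices adjacent to a boundary white face, corner cases near $a$ and $b$) follow by the same manipulation together with the boundary identity \eqref{relationH2}.
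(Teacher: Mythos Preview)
The paper does not supply its own proof of this proposition: it attributes the result to \cite{Sm3}, describing the argument merely as ``an elementary yet quite lengthy computation'' (the specific pointer to Appendix~C of \cite{Sm3} appears later, inside the proof of Proposition~\ref{uniform comparability}). Your outline accurately reconstructs the structure of Smirnov's computation --- telescope $H_\bullet(v_i)-H_\bullet(v)$ through a neighbouring white face using \eqref{def_H}, then feed in the Cauchy--Riemann relation of Lemma~\ref{Cauchy-Riemann} together with the argument-mod-$\pi$ rigidity special to $q=2$ to reduce $\Delta H_\bullet(v)\geq 0$ to a local algebraic inequality in the real parameters --- so your approach and the (deferred) proof coincide. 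Two small corrections: there are four edge directions and hence four possible lines for $F(e)$, not three (the paper's list omits $\mathbb{R}$ only because that was the reference case for edges parallel to $e_a$); and since the proposition is stated for interior sites, the boundary identity \eqref{relationH2} plays no role here.
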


Since we know that $H$ is equal to $1$ (resp. $0$) on the black diamonds of the wired arc (resp. the white diamonds of the free arc), the previous proposition can be seen as an approximate Dirichlet problem for the function $H$. In the next section, we make this statement rigorous by comparing $H$ to harmonic functions corresponding to the same boundary problems (on the set of black faces, or on the set of white ones).

\section{Comparison to harmonic measures} \label{comp-harm-mes}

In this section, we obtain a comparison result for the boundary values of the fermionic observable $F$ introduced
in the previous section in terms of discrete harmonic measures. It will be used to obtain all the quantitative
estimates on the observable that we need for the proof of Theorem \ref{RSW}.

\subsection{Comparison principle}

As in the previous section, let $(\D, a, b)$ be a discrete Dobrushin domain, with free boundary conditions on the
counterclockwise arc from $a$ to $b$ and wired boundary conditions on the counterclockwise arc from $b$ to $a$. 

For our estimates, we first extend the medial graph of our discrete domain by adding two extra layers of faces: 
one layer of white diamonds adjacent to the black diamonds of the sites of the wired arc, and one layer of 
black diamonds adjacent to the white diamonds of the free arc. We denote by $\bar{\D}_{\diamond}$ this extended domain.

\begin{remark}
Note that one faces a small technicality when adding a new layer of diamonds: some of these additional diamonds can overlap diamonds that were already here. For instance, if the domain has a slit, the free and the wired arc are adjacent along this slit, and the extra layer on the wired arc (resp. free arc) overlaps the free arc (resp. wired arc). As we will see, $H_{\bullet}$ is equal to $1$ on the wired arc, and to $0$ on the additional layer along the free arc. One should thus remember in the following that the added diamonds are considered as different from the original ones -- it will always be clear from the context which diamonds we are considering.
\end{remark}

\smallskip

For any given black face $B$, let us define $\left( X_{\bullet t}^B \right)_{t\geq 0}$ to be the continuous-time random walk 
on the black faces of $\bar{\D}_{\diamond}$ starting at $B$, that jumps with rate $1$ on adjacent black faces, \emph{except} for the black faces on the extra layer of black diamonds
adjacent to the free arc onto which it jumps with rate $\rho := (\sqrt{2} + 1) / 2$. Similarly, we denote by $\left( X_{\circ t}^{W}\right)_{t \geq 0}$ the continuous-time
random walk on the white faces of $\bar{\D}_{\diamond}$ starting at a white face $W$ that jumps with rate $1$ on adjacent white faces, \emph{except} for adjacent white faces on the extra layer of white diamonds adjacent to the wired arc onto which it jumps with the same rate $ \rho = (\sqrt{2} + 1) / 2$ as previously.

\smallskip

For a black face $B$, we denote by $\Hb(B)$ the probability that the random walk $ X_{\bullet t}^B $
hits the wired arc from $b$ to $a$ before hitting the extra layer adjacent to the free arc.
Similarly, for $W$ a white face, we denote by $\Hw(W)$ the probability
that the random walk $X_{\circ t}^W$ hits the additional layer adjacent to the wired arc before hitting the free arc. 
Note that there is no extra difficulty in defining these quantities for infinite discrete domains as well.
We have the following result:

\begin{proposition}[uniform comparability]\label{uniform comparability}
Let $(\D, a, b)$ be a discrete Dobrushin domain.
For any medial edge $e$ adjacent to a boundary edge of the
free arc, if we denote by $B=B(e)$ the black face that it borders and by $W=W(e)$ any closest white face that is \emph{not} 
on the free arc, we have
\begin{equation}
\sqrt{\Hw(W)} \leq |F (e)| \leq \sqrt{\Hb(B)} .
\end{equation}
\end{proposition}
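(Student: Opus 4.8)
The plan is to identify $|F(e)|^{2}$ with a boundary value of the function $H$ and then to squeeze $H_{\bullet}$ and $H_{\circ}$ between the two discrete harmonic measures by the maximum principle, using the subharmonicity of $H_{\bullet}$ and the superharmonicity of $H_{\circ}$ from Proposition~\ref{subsupharm}.

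I would first extract two elementary consequences of the defining relation $H(B)-H(W)=|F(\cdot)|^{2}$ of Proposition~\ref{definitionH}, after extending $H_{\bullet}$ to the black faces of $\bar{\D}_{\diamond}$ by the value $1$ on the wired arc (which agrees with Proposition~\ref{definitionH}) and $0$ on the extra layer of black diamonds along the free arc, and dually extending $H_{\circ}$ by $0$ on the free arc and $1$ on the extra layer of white diamonds along the wired arc -- here one must keep the added diamonds distinct from the original ones along a slit, as warned in the remark preceding the statement. On the one hand, a medial edge $e$ adjacent to a boundary edge of the free arc borders, besides $B(e)$, a white face that belongs to the free arc of $\D_{\diamond}$, where $H_{\circ}$ vanishes; hence
$$|F(e)|^{2} \;=\; H_{\bullet}(B(e)).$$
On the other hand, writing $e'$ for the edge separating $B(e)$ from the closest white face $W=W(e)$ that is not on the free arc -- which in the local picture near $e$ is adjacent to $B(e)$ -- the same relation gives $H_{\bullet}(B(e))-H_{\circ}(W(e))=|F(e')|^{2}\ge 0$. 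Thus it suffices to establish the two one-sided comparisons
$$ \Hw(W(e)) \;\le\; H_{\circ}(W(e)) \qquad\text{and}\qquad H_{\bullet}(B(e)) \;\le\; \Hb(B(e)). $$

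The core of the argument is to check that $H_{\bullet}$ is subharmonic for the generator of the weighted walk $X_{\bullet t}$ at every black face of $\bar{\D}_{\diamond}$ outside the wired arc and the extra free layer, and symmetrically that $H_{\circ}$ is superharmonic for $X_{\circ t}$. Inside the domain this is precisely Proposition~\ref{subsupharm}. At the black faces adjacent to the free arc (and dually at the white faces adjacent to the wired arc) one instead has to spell out the medial geometry near the boundary and combine the local relation~(\ref{eq:cauchy_riemann}) at the boundary-adjacent medial vertices with~(\ref{relationH1})--(\ref{relationH2}) and with the fact that each value of $F$ lies on one of the lines ${\rm e}^{{\rm i}\pi/4}\R$, ${\rm e}^{-{\rm i}\pi/4}\R$, ${\rm i}\R$ according to the direction of the edge; one then checks that exactly the jump rate $\rho=(\sqrt{2}+1)/2$ onto the extra layer turns the resulting identity into the required inequality. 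I expect this boundary computation -- elementary, but the place where the constant $\rho$ is actually forced -- to be the main obstacle.

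Granting this, I would conclude by optional stopping. Fix $B=B(e)$, let $\tau$ be the first time the walk $X^{B}_{\bullet t}$ reaches the wired arc or the extra free layer (finite almost surely on the finite domain $\bar{\D}_{\diamond}$; the case of infinite domains follows by exhaustion), and note that $\big(H_{\bullet}(X^{B}_{\bullet\, t\wedge\tau})\big)_{t\ge 0}$ is a submartingale because $H_{\bullet}$ is subharmonic off the absorbing set. Since $H_{\bullet}$ equals $1$ on the wired arc and $0$ on the extra free layer,
$$H_{\bullet}(B) \;\le\; \E\big[H_{\bullet}(X^{B}_{\bullet\,\tau})\big] \;=\; \Hb(B).$$
The symmetric argument applied to the superharmonic function $H_{\circ}$ and the walk $X_{\circ t}$ yields $H_{\circ}(W(e))\ge \Hw(W(e))$. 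Together with the two observations of the previous paragraph this gives $\Hw(W(e))\le |F(e)|^{2} \le \Hb(B(e))$, and taking square roots completes the proof.
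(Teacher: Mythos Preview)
Your approach is exactly the paper's: reduce to $H_{\bullet}(B)\le \Hb(B)$ and $H_{\circ}(W)\ge \Hw(W)$, extend $H$ by $0$/$1$ on the extra layers, and invoke sub/superharmonicity plus the maximum principle. The boundary step you flag as the obstacle is handled in the paper by extending $F$ to the two medial edges $e_3,e_4$ just outside the free arc so that both the argument constraints and the relation $F(e_1)+F(e_3)=F(e_2)+F(e_4)$ persist; since $F(e_2)={\rm e}^{-{\rm i}\pi/4}F(e_1)$ along the free arc this forces $|F(e_3)|^2=\frac{2-\sqrt{2}}{2+\sqrt{2}}\,H_\bullet(B)$, and applying Proposition~\ref{subsupharm} at $B$ with this fictitious value $\tilde H_\bullet(B_S)$ in place of $H_\bullet(B_S)=0$ yields precisely the subharmonicity inequality for the $\rho$-weighted Laplacian.
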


\begin{proof}

By construction of the function $H$, we have $|F(e)|^2=H(B)$ and $H(W) = |F(e)|^2 - |F(e')|^2 \leq |F(e)|^2$, where $e'$ is the medial edge between $B$ and $W$: it is therefore sufficient to show that $H(B) \leq \Hb(B)$ and $ H(W) \geq \Hw(W)$. We only prove that $H(B)\leq \Hb(B)$, since the other case can be handled in the same way.

For this, we use a variation of a trick introduced in 
\cite{CS2} and extend the function $H$ to the extra layer of black diamonds -- added as explained
above -- by setting $H$ to be equal to $0$ there.
It is then sufficient to show that the restriction $H_{\bullet}$ of $H$ to the black diamonds of 
$\bar{\D}_{\diamond}$ is subharmonic for the Laplacian that is the generator
of the random walk $X_{\bullet}$, since it has the
same boundary values as $\Hb$. Inside the domain, this is given by
Proposition \ref{subsupharm}, since there the Laplacian is the usual discrete Laplacian (associated with it is just a simple random walk). The only thing to check is when a face involved in the computation of the Laplacian
belongs to one of the extra layers. For the sake of simplicity, we study the case when only one face belongs to these extra layers.

\begin{figure}
\begin{center}
\includegraphics[width=5cm]{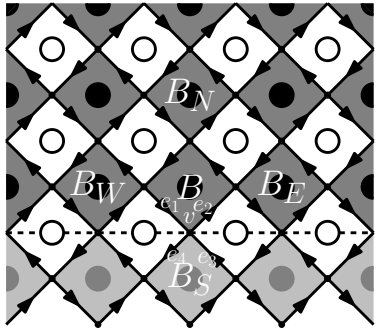}
\caption{\label{fig:comparison}We extend $\D_{\diamond}$ by adding two extra layers of medial faces, and extend the functions $H_{\bullet}$ and $H_{\circ}$ there. Here is represented the extension along the free arc.}
\end{center}
\end{figure}

Denote by $B_W$, $B_N$, $B_E$ and $B_S$ the black faces adjacent to $B$, and assume that $B_S$ is on the extra layer (see Figure \ref{fig:comparison}). The discrete Laplacian of $X_{\bullet}$ at face $B$ is denoted by $\Delta_{\bullet}$. We claim that
\small\begin{equation}\label{modified_laplacian}
\Delta_{\bullet} H_{\bullet}(B)=\frac{2+\sqrt{2}}{6+5\sqrt{2}}[H_{\bullet}(B_W)+H_{\bullet}(B_N)+H_{\bullet}(B_E)]+\frac{2\sqrt{2}}{6+5\sqrt{2}}H_{\bullet}(B_S)-H_{\bullet}(B)\geq 0.
\end{equation}\normalsize
For that, let us denote by $e_1, e_2, e_3, e_4$ the four medial edges at the bottom vertex $v$ between $B$ and $B_S$, in clockwise order, with $e_1$ and $e_2$ along $B$, and $e_3$ and $e_4$ along $B_S$ (see Figure \ref{fig:comparison}) -- note that $e_3$ and $e_4$ are not edges of $\mathcal{D}_{\diamond}$, but of $(\mathbb{Z}^2)_{\diamond}$.

We extend $F$ to $e_3$ and $e_4$ by requiring $F(e_3)$ and $F(e_1)$ to be orthogonal, as well as $F(e_4)$ and $F(e_2)$, and $F(e_1) + F(e_3) =  F(e_2) + F(e_4)$ to hold true. This defines these two values uniquely: indeed, as noted before, we know that $F(e_2) = {\rm e}^{-{\rm i} \pi/4} F(e_1)$  on  the boundary (since $W_{\gamma}(e_a,e_1)$ and $W_{\gamma}(e_a,e_2)$ are fixed, with $W_{\gamma}(e_a,e_2) = W_{\gamma}(e_a,e_1) + \pi/2$, and the curve cannot go through one of these edges without going through the other one), which implies, after a small calculation, that
$$|F(e_3)|^2=\Big|\Big(\tan \frac{\pi}{8}\Big)  {\rm e}^{{\rm i} \pi/4} F(e_2) \Big|^2 =\frac{2-\sqrt{2}}{2+\sqrt{2}} |F(e_2)|^2 = \frac{2-\sqrt{2}}{2+\sqrt{2}} H_{\bullet}(B).$$
If we denote by $\tilde{H}_{\bullet}$ the function defined by $\tilde{H}_{\bullet}=H_{\bullet}$ on $B$, $B_W$, $B_N$ and $B_E$, and by 
\begin{equation}
\tilde{H}_{\bullet}(B_S)=|F(e_3)|^2=\frac{2-\sqrt{2}}{2+\sqrt{2}}H_{\bullet}(B),
\end{equation}
then $\tilde{H}_{\bullet}$ satisfies the same relation Eq.(\ref{def_H}) (definition of $H$) for $e_3$ and $e_4$, as inside the domain. Since the fermionic observable $F$ verifies the same local equations, the computation performed in the Appendix C of \cite{Sm3} is valid, Proposition \ref{subsupharm} applies at $B$, and we deduce
\begin{equation}\Delta \tilde{H}_{\bullet}(B)=\frac{1}{4}[\tilde{H}_{\bullet}(B_W)+\tilde{H}_{\bullet}(B_N)+\tilde{H}_{\bullet}(B_E)+\tilde{H}_{\bullet}(B_S)]-\tilde{H}_{\bullet}(B)\geq 0.
\end{equation}
Using the definition of $\tilde{H}_{\bullet}$, this inequality can be rewritten as
\begin{equation}\label{normal_Laplacian}
\frac{1}{4}[H_{\bullet}(B_W)+H_{\bullet}(B_N)+H_{\bullet}(B_E)]-\frac{6+5\sqrt{2}}{4(2+\sqrt{2})}H_{\bullet}(B)\geq 0.
\end{equation}
Now using that $H_{\bullet}(B_S)=0$, we get the claim, Eq.(\ref{modified_laplacian}).
\end{proof}

\subsection{Estimates on harmonic measures}

In the previous subsection, we gave a comparison principle between the values of $H$ near the boundary, and the harmonic measures associated with two (almost simple) random walks, on the two lattices composed of the black faces and of the white faces respectively. In this subsection, we give estimates for these two harmonic measures in different domains needed for the proof of Theorem \ref{RSW}. We start by giving a lower bound which is useful in the proof of the 1-point estimate.

\begin{lemma} \label{2 points estimate}
For $\beta>0$ and $n\geq 0$, let $R_n^{\beta}$ be the graph
$$R_n^{\beta}=\llbracket -\beta n,\beta n\rrbracket\times \llbracket 0,2n\rrbracket.$$
Then there exists $c_1(\beta)>0$ such that for any $n\geq 1$, 
\begin{equation}\label{lower bound rectangle}
\Hw(W_x)\geq \frac{c_1(\beta)}{n^2}
\end{equation}
in the Dobrushin domain $(R_{n}^{\beta},u,u)$ (see Figure \ref{fig:example}), for all $x=(x_1,0)$ and $u=(u_1,2n)$ such that $|x_1|, |u_1| \leq \beta n/2$ (\emph{i.e.} far enough from the corners), $W_x$ being any of the two white faces that are both adjacent to $x$ and not on the free arc.
\end{lemma}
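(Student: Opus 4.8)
The quantity $\Hw(W_x)$ is the probability that the continuous-time random walk $X_{\circ}^{W_x}$ on the white faces of $\overline{(R_n^\beta)}_\diamond$ (with the modified jump rate $\rho$ onto the extra layer adjacent to the wired arc) reaches that extra layer before hitting the free arc. Since $u=(u_1,2n)$ is a single boundary point, the Dobrushin domain $(R_n^\beta,u,u)$ has its wired arc equal to the \emph{whole} boundary polygon except the two medial edges $e_u$ next to $u$; equivalently, essentially the entire outer boundary of the rectangle is wired, and the ``free arc'' has collapsed to a neighbourhood of $u$. So $\Hw(W_x)$ is, up to the boundary-layer bookkeeping, the probability that a near-simple random walk started at a white face adjacent to the bottom side $\llbracket-\beta n,\beta n\rrbracket\times\{0\}$, at horizontal coordinate $|x_1|\le \beta n/2$, exits the rectangle $R_n^\beta$ through the top side near $u$ rather than through the rest of the boundary. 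The claim is that this is $\gtrsim n^{-2}$.

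The plan is a standard discrete-harmonic-measure lower bound. First I would reduce to the genuine simple random walk on $\Z^2$ (or the white sublattice, which is again a square lattice up to rescaling): the modified rates $\rho$ only occur on the single extra layer adjacent to the wired arc, which is an absorbing layer here, so they change the absorption probabilities only by bounded multiplicative factors and cost only a constant in $c_1(\beta)$; similarly the $1/\sqrt2$ rescaling of the medial lattice is irrelevant for hitting probabilities. Second, I would get the walk started at $W_x$ near the bottom side up to the top side: with probability bounded below by a constant depending only on $\beta$ (Harnack / gambler's-ruin in the strip of height $2n$ and width $2\beta n$), the walk reaches a point of the top side $\llbracket-\beta n,\beta n\rrbracket\times\{2n\}$ before exiting through the left/right sides — indeed crossing a bounded-aspect-ratio rectangle the ``long way'' has probability bounded below, or more simply the vertical displacement needed is $2n$ while the horizontal room is of order $n$. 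Third, and this is the quantitative heart: conditionally on the walk first touching the top side at some vertex $y$, I need the probability that it is absorbed at (a medial edge next to) $u$ rather than somewhere else to be $\gtrsim n^{-2}$. This is the exit distribution of the random walk on the discrete rectangle, and the harmonic measure of a single boundary vertex among $\Theta(n)$ boundary vertices, as seen from a point at distance $\Theta(n)$, is of order $n^{-1}\cdot n^{-1}=n^{-2}$ — one factor $n^{-1}$ for the harmonic measure of a single boundary point on a side of length $\Theta(n)$ seen from distance $\Theta(n)$ (a discrete analogue of the Poisson kernel, or a direct last-exit / reflection computation on the half-plane), and one more because one further has to force the walk, after reaching the top side far from $u$, to actually hit the small neighbourhood of $u$. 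Concretely I would bound below by: (start at $W_x$) $\times$ (reach the top side, constant) $\times$ (from the top side reach the neighbourhood of $u$ without leaving $R_n^\beta$, order $n^{-1}$) $\times$ (from there get absorbed exactly at $e_u$, order $n^{-1}$), giving $c_1(\beta)/n^2$.

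The condition $|x_1|,|u_1|\le\beta n/2$ is used precisely to keep the starting point and the target $u$ at distance $\Theta(n)$ from the corners of the rectangle, so that all the Harnack-type and gambler's-ruin estimates hold with constants depending only on $\beta$ and there is genuine room of order $n$ around $u$; near a corner the harmonic measure of a single point could be much smaller than $n^{-2}$, which is why that hypothesis is there. The main obstacle, as usual with these estimates, is organising the last-exit / Poisson-kernel bound for the simple random walk in the finite rectangle cleanly enough that the two factors of $n^{-1}$ come out with explicit $\beta$-dependent constants and uniformly in $n$; one clean route is to couple the walk with a strip random walk and a half-plane walk and invoke the known $\Theta(1/N)$ lower bound for the harmonic measure of one boundary site of a side of length $N$ in a square, then patch the boundary-layer modification in at the end. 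I expect no conceptual difficulty beyond that — everything is a union of elementary random-walk moves each of probability bounded below by $c(\beta)$ or by $c(\beta)/n$.
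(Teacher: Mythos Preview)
Your overall approach---reducing to a near-simple random walk and invoking gambler's-ruin and local-CLT/Poisson-kernel estimates---is precisely what the paper does; its own proof is a one-line appeal to ``standard results on simple random walks (the local central limit theorem and gambler's ruin type estimates)'' with no further detail. So the strategy is the same. However, your execution contains two errors, one cosmetic and one substantive.

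The cosmetic one: you have the arcs backwards. In $(R_n^\beta,u,u)$ the \emph{wired} arc is the single vertex $u$ and the \emph{free} arc is the entire remaining boundary; the paper says this explicitly in the proof of Proposition~\ref{probability that one point belongs to gamma} (``the wired arc consists of a single vertex''). Luckily your description of the quantity to estimate---the probability that the white walk is absorbed in the extra layer near $u$ rather than anywhere else on the boundary---is nonetheless correct, so this is only terminology.

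The substantive gap is in your decomposition. You claim the walk from $W_x$ reaches the top side with probability bounded below by a constant, worrying only about the left and right sides. But $W_x$ sits at height $\approx 1/2$ in the white lattice, and the \emph{bottom} side is part of the free arc and hence absorbing. By gambler's ruin the probability of reaching height $\sim 2n$ before being absorbed at height $\sim -1/2$ is $\Theta(1/n)$, not $\Theta(1)$. Your next step (``from the top side reach the neighbourhood of $u$, order $n^{-1}$'') is then also wrong as stated: once the walk is at height $2n-\tfrac12$, the whole top side except near $u$ is absorbing one step above, so moving horizontally a distance $\Theta(n)$ along the top without absorption is far more costly than $n^{-1}$. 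A correct two-step decomposition is: (i) from $W_x$ reach a bulk point such as $(u_1,n)$ before touching any boundary---probability $\Theta(1/n)$ by vertical gambler's ruin, with the horizontal displacement costing only a $\beta$-dependent constant since there is $\Theta(n)$ room on both sides; (ii) from that bulk point, exit through the specific boundary face adjacent to $u$---probability $\Theta(1/n)$ by the local CLT (exit distribution from the centre of a $\Theta(n)\times\Theta(n)$ box is comparable to uniform on each side). The product gives $c_1(\beta)/n^2$, and the hypothesis $|x_1|,|u_1|\le\beta n/2$ is used exactly where you say.
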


\begin{proof}
This proposition follows from standard results on simple random walks (the local central limit theorem and gambler's ruin type estimates). For the sake of conciseness, we do not provide a detailed proof.
\end{proof}

\begin{figure}
\begin{center}
\includegraphics[width=6cm]{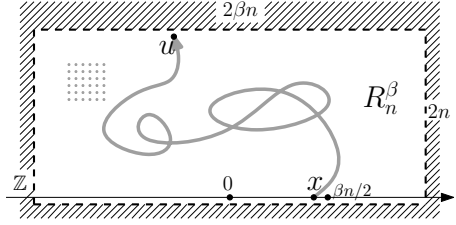}
\caption{\label{fig:example}Estimate of Lemma \ref{2 points estimate}: the dashed line corresponds to the free arc.}
\end{center}
\end{figure}

In the remaining of the section, we consider \emph{only} Dobrushin domains $(\mathcal{D},a,b)$ that \emph{contain the origin} on the free arc, and are \emph{subsets} of the medial lattice $\mathbb{H}_{\diamond}$, where $\mathbb{H}=\{(x_1,x_2)\in \Z^2,x_2\geq 0\}$ denotes the upper half plane -- in this case, we say that \emph{$\mathcal{D}$ is a Dobrushin $\mathbb{H}$-domain}. For the following estimates on harmonic measures, the Dobrushin domains that we consider can also be infinite. We are interested in the harmonic measure of the wired arc seen from a given point: without loss of generality, we can assume that this point is just the origin. Let $B_0$ be the corresponding black diamond of the medial lattice, and $W_0$ be an adjacent white diamond which is not on the free arc.

We first prove a lower bound on the harmonic measure. For that, we introduce, for $k \in \Z$ and $n \geq 0$, the segments
$$l_n(k) = \{ k \} \times \llbracket 0,n \rrbracket \quad (= \{(k,j):0\leq j\leq n\}).$$

\begin{lemma} \label{lower_bound}
There exists a constant $c_2>0$ such that for any Dobrushin $\mathbb{H}$-domain $(\mathcal{D},a,b)$, we have
\begin{equation}
\Hw(W_0) \geq \frac{c_2}{k},
\end{equation}
provided that, in $\mathcal{D}$, the segment $l_k(-k)$ disconnects from the origin the intersection of the free arc with the upper half-plane (see Figure \ref{fig:comparison_lower_bound}).
\end{lemma}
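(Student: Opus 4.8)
The plan is to read $\Hw(W_0)$ as a hitting probability for the planar random walk $X_\circ$ and to bound it from below by a gambler's-ruin estimate. For hitting probabilities the precise nature of $X_\circ$ is immaterial: the only modification with respect to simple random walk is that the jump rates across one boundary layer (the one adjacent to the wired arc) are multiplied by the fixed factor $\rho=(\sqrt2+1)/2$, so the embedded discrete chain is a simple random walk with uniformly elliptic transition probabilities, and the change of clock plays no role. Hence all the estimates below are standard facts about planar simple random walk — gambler's ruin, the invariance principle for box crossings, and related local estimates — used with universal constants.

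First I would extract the geometric content of the hypothesis. Let $\mathcal C$ be the connected component of the origin in $\mathcal D\setminus l_k(-k)$. The hypothesis says precisely that, inside $\mathcal C$, the free arc is contained in the line $\{x_2=0\}$: a free-arc vertex of $\mathcal C$ of positive height would be joined to the origin inside $\mathcal D$ without meeting $l_k(-k)$, contradicting the disconnection assumption. Consequently every boundary vertex of $\mathcal C$ of height $\ge 1$ lies either on the wired arc or on $l_k(-k)$; in particular, a trajectory of $X_\circ$ confined to $\mathcal C$ that reaches the wired-arc layer at positive height is a success, while the only way to reach the free arc from inside $\mathcal C$ is to come down to $\{x_2=0\}$.

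The estimate itself I would carry out in two steps, starting $X_\circ$ at $W_0$, which lies within bounded distance of the origin at positive height. Step one is the main gambler's-ruin bound: the relevant (vertical) coordinate of $X_\circ$ is, up to the harmless boundary modification — which only pushes the walk toward the wired arc — a martingale, so with probability at least $c/k$ the walk reaches height $k$ before height $0$; sharpening this with the invariance principle, with probability at least $c/k$ it does so while staying inside the box $Q=\llbracket-k,k\rrbracket\times\llbracket0,k\rrbracket$. Such a trajectory never reaches $x_1=-k$, hence never crosses $l_k(-k)$, and never touches $\{x_2=0\}$, hence never hits the free arc; if it hits the wired-arc layer first we are already done, otherwise the walk now occupies a vertex $y$ with $x_2(y)=k$ and $|x_1(y)|<k$. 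Step two is to show that from such a $y$ the walk reaches the wired arc before the free arc with probability bounded below by a universal constant: since the boundary of $\mathcal C$ above height $1$ is entirely wired apart from $l_k(-k)$, and $l_k(-k)$ spans $\mathcal D$ near $x_1=-k$ so that the domain is pinched there, a further constant-probability box-crossing estimate shows that $X_\circ$ started at $y$ hits the wired-arc layer before returning to $\{x_2=0\}$ and before reaching $l_k(-k)$. Multiplying the two probabilities yields $\Hw(W_0)\ge c_2/k$.

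The random-walk inputs are routine; the delicate part — where I expect most of the work — is the geometric bookkeeping underlying the two ``in particular'' clauses, namely checking once and for all that the disconnection hypothesis forces the free arc to stay on $\{x_2=0\}$ throughout $\mathcal C$ near the origin, and that from height $k$ the wired arc is reached with probability bounded away from zero uniformly over all admissible Dobrushin $\mathbb H$-domains, the pinching along $l_k(-k)$ being what makes this uniform. An alternative to step two, when $\mathcal C$ happens to contain a box of bounded aspect ratio with the wired arc along its far side, is to invoke a rectangle estimate of the type of Lemma~\ref{2 points estimate} together with the monotonicity of the harmonic measure in the domain.
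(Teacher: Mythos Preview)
Your approach and the paper's are genuinely different in structure. The paper does not build a trajectory at all: it uses two monotonicity comparisons. First it passes to the connected component $\mathcal D_0$ of the origin in $\mathcal D\setminus l_k(-k)$, declaring the cut $l_k(-k)$ to be free; this can only decrease $\Hw(W_0)$. Then it compares $\mathcal D_0$ to the single explicit Dobrushin domain $(\mathbb H\setminus l_k(-k),(-k,k),\infty)$ --- wired to the left of the slit tip, free to the right --- where the harmonic measure from $W_0$ is a routine $c/k$ computation. All the geometry of $\mathcal D$ is thus absorbed into one comparison with a canonical slit domain.

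Your step~1 (gambler's ruin inside $Q$, producing the $c/k$) is fine. Step~2 is where the content lies, as you correctly anticipate, and your justification is too loose as written. A ``box-crossing estimate'' by itself does not locate the wired arc; what you actually need is that the pinching forces a \emph{wired} piece of $\partial\mathcal D$ on the $\mathcal C$-side near the top of the crosscut, and then a scale-invariant hitting estimate (e.g.\ in the quarter-type region $\{x_1>-k,\,x_2>0\}$) showing that from $y$ the walk exits through that wired piece --- or through any earlier $\partial\mathcal D$-point at positive height, which is automatically wired since the walk has not crossed $l_k(-k)$ --- with probability bounded below uniformly. This can be made rigorous, but the cleanest way to carry it out is precisely a monotone comparison with a reference domain, which is what the paper does in one stroke. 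So the two routes ultimately converge; the paper's formulation has the advantage that after the comparison only a single explicit estimate remains, with no residual uniformity to check.
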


\begin{proof}
We know that $l_k(-k)$ disconnects the origin from the part of the free arc that lies in the upper half-plane, let us thus consider the connected component of $\mathcal{D}\setminus l_k(-k)$ that contains the origin. In this new domain $\mathcal{D}_0$, if we put free boundary conditions along $l_k(-k)$, the harmonic measure of the wired arc is smaller than the harmonic measure of the wired arc in the original domain $\mathcal{D}$. On the other hand, the harmonic measure of the wired arc in $\mathcal{D}_0$ is larger than the harmonic measure of the wired arc in the slit domain $(\mathbb{H}\setminus l_k(-k),(-k,k),\infty)$, which has respectively wired and free boundary conditions to the left and to the right of $(-k,k)$ (see Figure \ref{fig:comparison_lower_bound}). Estimating this harmonic measure is straightforward, using the same arguments as before.
\end{proof}

\begin{figure}
\begin{center}
\includegraphics[width=12cm]{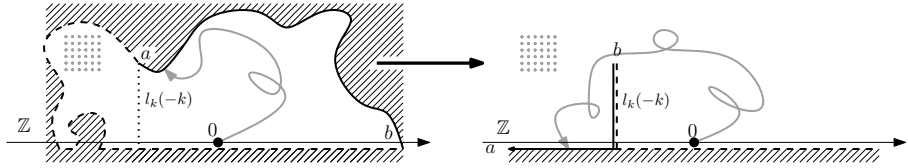}
\caption{\label{fig:comparison_lower_bound}The two domains involved in the proof of Lemma \ref{lower_bound}.}
\end{center}
\end{figure}

We now derive upper bounds on the harmonic functions. We will need two estimates of different types. The first one takes into account the distance between the origin and the wired arc, while the second one requires the existence of a segment $l_n(k)$ disconnecting the wired arc from the origin (still inside the domain).
 
\begin{lemma} \label{upper_bound}
There exist constants $c_3,c_4>0$ such that for any Dobrushin $\mathbb{H}$-domain $(\mathcal{D},a,b)$,

\begin{enumerate}[(i)]
\item \label{upperbounddistance} if $d_1(0)$ denotes the distance between the origin and the wired arc,
\begin{equation}
\Hb(B_0)\leq c_3 \frac{1}{d_1(0)},
\end{equation}

\item \label{upperboundarc} and if the segment $l_n(k)$ disconnects the wired arc from the origin inside $\mathcal{D}$,
\begin{equation}
\Hb(B_0) \leq c_4 \frac{n}{|k|^2}.
\end{equation}
\end{enumerate}
\end{lemma}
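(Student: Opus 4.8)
Both bounds will be obtained by reducing $\Hb(B_0)$ to a hitting probability for \emph{honest} simple random walk in a standard half-plane domain, via monotonicity of harmonic measure, and then quoting classical random-walk estimates. Recall that $\Hb(B_0)$ is the probability that the walk $X_\bullet$, started at $B_0$, reaches the wired arc before the extra layer of black diamonds along the free arc, and that $X_\bullet$ differs from simple random walk on the black diamonds of $\mathbb H_{\diamond}$ only through the jump rate $\rho=(\sqrt2+1)/2$ on that bottom layer, which is a fixed constant; hence up to a universal multiplicative factor one may argue with simple random walk. The two monotonicity inputs are: (a) enlarging the value-$1$ target set, or deleting part of the value-$0$ set, can only increase the hitting probability; (b) since the extra layer along the free arc carries value $0$ and contains the faces neighbouring the origin, one may harmlessly extend this layer, so that $\Hb(B_0)$ is bounded above by the probability of reaching the relevant far-away target before the walk returns to a bounded neighbourhood of the origin along the real axis.

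For part \ref{upperbounddistance}, set $d=d_1(0)$. Every vertex of the wired arc lies at Euclidean distance at least $c_0 d$ from $B_0$ for a lattice constant $c_0>0$, so on the event defining $\Hb(B_0)$ the walk must exit the half-disc $\Lambda=\{z:|z|<c_0 d\}\cap\mathbb H$ through its semicircular part before reaching the diameter $\llbracket-c_0 d,c_0 d\rrbracket\times\{0\}$, which near the origin is free (value-$0$) boundary. By the monotonicity above, $\Hb(B_0)$ is at most a constant times the harmonic measure, seen from a point at bounded distance from the absorbing axis of the half-plane, of the semicircle $\partial\Lambda\cap\mathbb H$. This harmonic measure is $O(1/d)$ --- a classical fact (obtained e.g.\ from the explicit conformal map $z\mapsto\frac12(z/R+R/z)$ of $\mathbb H\cap B(0,R)$ onto a half-plane, or from a Beurling-type estimate, or from optional stopping applied to $z\mapsto\mathrm{Im}(z)$ when the target is phrased as reaching vertical height $R$). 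This gives $\Hb(B_0)\le c_3/d_1(0)$.

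For part \ref{upperboundarc}, assume $l_n(k)$ disconnects the wired arc from the origin inside $\mathcal D$. If $n\ge|k|$, then any path in $\mathcal D$ from the origin to the wired arc crosses $l_n(k)$, hence travels horizontal distance at least $|k|$, so $d_1(0)\ge c_0|k|$ and part \ref{upperbounddistance} already yields $\Hb(B_0)\le c_3/(c_0|k|)\le(c_3/c_0)\,n/|k|^2$; enlarging $c_4$ if necessary also covers the finitely many small values of $|k|$. Assume then $n<|k|$. Any trajectory realising the event of $\Hb(B_0)$ must pass through one of the $n+1$ vertices of $l_n(k)=\{k\}\times\llbracket0,n\rrbracket$ before being absorbed near the origin, so, by the same monotonicity reduction, $\Hb(B_0)$ is at most a constant times the harmonic measure, seen from near the origin, of the segment $l_n(k)$ inside the half-plane with absorbing real axis. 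This is $O(n/|k|^2)$: either directly from the half-plane Poisson-kernel / Green's-function estimate for a short far segment, or by telescoping the estimate of part \ref{upperbounddistance} over dyadic annuli up to scale $|k|$ to reach horizontal distance $\asymp|k|$ (cost $\asymp 1/|k|$) and then paying one last gambler's-ruin factor $\asymp n/|k|$ for the walk's height to be at most $n$ at that moment. This gives $\Hb(B_0)\le c_4\,n/|k|^2$.

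\textbf{Main obstacle.} The only genuinely delicate point is the reduction to these standard half-plane problems. One must verify that the constant jump rate $\rho$ on the bottom layer costs only a bounded factor when passing to simple random walk, and --- since the free arc of a Dobrushin $\mathbb H$-domain need not lie on the real axis but may protrude into $\mathbb H$, and $\mathcal D$ itself need not fill the half-disc $\Lambda$ --- that such protrusions are additional value-$0$ boundary and the missing part of $\Lambda$ is unreachable, both of which only \emph{decrease} $\Hb$ and hence leave the upper bounds intact. Once the problem is honestly placed in the full upper half-plane with absorbing axis, everything reduces to the escape estimate $\asymp 1/R$ and its refinement $\asymp n/|k|^2$ for a short far segment, which are textbook.
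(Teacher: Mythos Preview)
Your approach is essentially the paper's: reduce by monotonicity to a clean half-plane hitting problem (half-ball for (i), half-plane with the segment $l_n(k)$ as target for (ii)) and invoke classical random-walk estimates. Your ``main obstacle'' paragraph correctly identifies why the reduction is legitimate for upper bounds --- extra free-arc protrusions are additional value-$0$ boundary and missing pieces of the half-disc are unreachable, both of which only lower $\Hb$; and since $\rho>1$ the modified rate only accelerates absorption at the $0$-layer, so passing to simple random walk already gives an upper bound.

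There is one concrete slip. In your case split for (ii), when $n\ge|k|$ you write ``any path in $\mathcal D$ from the origin to the wired arc crosses $l_n(k)$, hence travels horizontal distance at least $|k|$, so $d_1(0)\ge c_0|k|$.'' The last implication is false: $d_1(0)$ is the ambient $\mathbb Z^2$-distance (this is what the half-ball comparison in (i) uses), not the intrinsic distance in $\mathcal D$, and in a winding domain the wired arc can sit at Euclidean distance much smaller than $|k|$ while still being separated from the origin by $l_n(k)$. Fortunately the case split is unnecessary: your direct comparison --- $\Hb(B_0)$ is at most the harmonic measure of $l_n(k)$ seen from near the origin in $\mathbb H$ with absorbing axis --- gives $O(n/|k|^2)$ uniformly in $n$ and $k$ (when $n\ge|k|$ it is bounded by the harmonic measure of the full half-line $\{k\}\times[0,\infty)$, which is $O(1/|k|)\le O(n/|k|^2)$). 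This is exactly what the paper does, phrased as the slit domain $\mathbb H\setminus l_n(k)$ with wired conditions on one side of the slit.
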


\begin{proof}
Let us first consider item (\ref{upperbounddistance}). For $d=d_1(0)$, define the Dobrushin domain $(\tilde{\mathcal{B}}_d,(-d,0),(d,0))$ where $\tilde{\mathcal{B}}_d$ is the set of sites in $\mathbb{H}$ at a graph distance at most $d$ from the origin (see Figure \ref{fig:upper_bound}). The harmonic measure of the wired arc in $(\mathcal{D},a,b)$ is smaller than the harmonic measure of the wired arc in this new domain $\tilde{\mathcal{B}}_d$, and, as before, this harmonic measure is easy to estimate.

Let us now turn to item (\ref{upperboundarc}). Since $l_n(k)$ disconnects the wired arc from the origin, the harmonic measure of the wired arc is smaller than the harmonic measure of $l_n(k)$ inside $\mathcal{D}$, and this harmonic measure is smaller than it is in the domain $\H\setminus l_n(k)$ with wired boundary conditions on the left side of $l_n(k)$ -- right side if $k<0$ (see Figure \ref{fig:upper_bound}). Once again, the estimates are easy to perform in this domain.
\end{proof}

\begin{figure}
\begin{center}
\includegraphics[width=12cm]{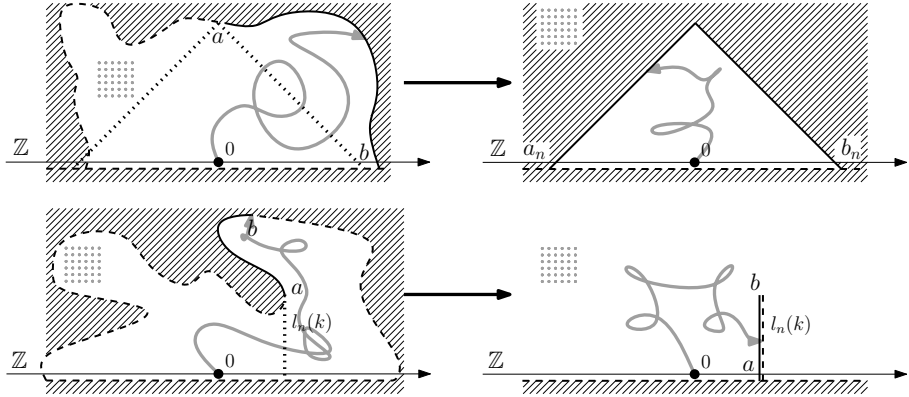}
\caption{\label{fig:upper_bound}The two different upper bounds (\ref{upperbounddistance}) and (\ref{upperboundarc}) of Lemma \ref{upper_bound}.}
\end{center}
\end{figure}

\section{Proof of Theorem \ref{RSW}} \label{section_proof}

We now prove our main result, Theorem \ref{RSW}. The main step is to prove the uniform lower bound for rectangles of bounded aspect ratio with free boundary conditions. We then use monotonicity to compare boundary conditions and obtain the desired result. In the case of free boundary conditions, the proof relies on a second moment estimate on the number $N$ of pairs of vertices $(x,u)$, on the top and bottom sides of the rectangle respectively, that are connected by an open path. 

The organization of this section follows the second-moment estimate strategy. In Proposition \ref{probability that one point belongs to gamma}, we first prove a lower bound on the probability for one site on the bottom side of a rectangle to be connected to a site on the top side. This estimate gives a lower bound on the expectation of $N$. Then, Proposition \ref{probability that two points belong to gamma} provides an upper bound on the probability that two points on the bottom side of a rectangle are connected to the top side. This proposition is the core of the proof, and it provides the right bound for the second moment of $N$. It allows us to conclude the section by using the second moment estimate method, thus proving Theorem \ref{RSW}.

In this section, we use two main tools: the Domain Markov property, and probability estimates for connections between the wired arc and sites on the free arc. We first explain how the previous estimates on harmonic measures can be used to derive estimates on connection probabilities: the following lemma is instrumental in our proof.

\begin{lemma}\label{interpretation H}
Let $(\mathcal{D},a,b)$ be a Dobrushin domain. For any site $x$ on the free arc of $\mathcal{D}$, we have
\begin{equation}
\sqrt{\Hw(W_x)}\leq \mathbb{P}_{\mathcal{D},a,b}(x\leadsto \text{wired arc}) \leq \sqrt{\Hb(B_x)},
\end{equation}
where $B_x$ is the black face corresponding to $x$, and $W_x$ is any closest white face that is not on the free arc. 
\end{lemma}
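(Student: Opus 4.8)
The plan is to relate the connection event $\{x\leadsto \text{wired arc}\}$ to the fermionic observable $F$ evaluated at a medial edge $e$ adjacent to $x$, and then invoke the uniform comparability result of Proposition~\ref{uniform comparability}. The key observation is that if $x$ lies on the free arc, then the exploration path $\gamma$ visits a medial edge $e$ bordering the black face $B_x$ if and only if $x$ is connected to the wired arc. Indeed, $\gamma$ is the interface separating the primal cluster of the wired arc from the dual cluster of the free arc; a site $x$ on the free arc is connected to the wired arc precisely when it fails to be in the dual-open cluster attached to the free arc, which is exactly the condition that the exploration interface comes around and touches $x$. So I would first argue carefully that $\{e\in\gamma\} = \{x\leadsto \text{wired arc}\}$ up to a choice of the appropriate boundary medial edge $e$, using the loop representation and the Domain Markov remark about slit domains already recorded in the excerpt.

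Next I would control the winding phase. Since $x$ is on the (free) boundary and $e$ is a boundary medial edge, the winding $W_\gamma(e_a,e)$ is \emph{deterministic}: $\gamma$ cannot wind around a boundary edge, so the phase $\mathrm{e}^{-\frac{\mathrm i}{2}W_\gamma(e_a,e)}$ is a fixed unit complex number on the event $\{e\in\gamma\}$. Therefore, directly from the defining formula Eq.(\ref{observable_F}),
$$|F(e)| = \Big| \mathbb{E}_{\mathcal D,a,b}\big[\mathrm{e}^{-\frac{\mathrm i}{2}W_\gamma(e_a,e)}\mathbb{I}_{e\in\gamma}\big]\Big| = \mathbb{P}_{\mathcal D,a,b}(e\in\gamma) = \mathbb{P}_{\mathcal D,a,b}(x\leadsto \text{wired arc}),$$
exactly the same computation as in Eq.(\ref{relationH2}). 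Combining this identity with Proposition~\ref{uniform comparability}, which gives $\sqrt{\Hw(W)} \le |F(e)| \le \sqrt{\Hb(B)}$ for a boundary medial edge $e$ on the free arc with $B=B(e)=B_x$ and $W=W(e)=W_x$ a closest white face not on the free arc, yields the claimed double inequality.

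The one point requiring care — and the main (mild) obstacle — is the matching of geometric objects: verifying that the black face $B(e)$ produced by Proposition~\ref{uniform comparability} is indeed the black diamond $B_x$ associated with the site $x$, that the white face $W(e)$ can be taken to be an arbitrary closest white face not on the free arc (so that the statement holds for \emph{any} such $W_x$, as asserted), and that there exists a suitable medial edge $e$ adjacent to the boundary edge of the free arc at $x$ for which the event $\{e\in\gamma\}$ is exactly $\{x\leadsto\text{wired arc}\}$. This is a direct inspection of the medial lattice near a free-arc vertex, using the orientation convention and the definitions of the arcs of $\mathcal{D}_\diamond$; one should also note the boundary cases where $x$ is near $a$ or $b$, but since $x$ is assumed on the free arc this is handled by the convention that $e_a$, $e_b$ are edges of $\mathcal{D}_\diamond$. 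Once these identifications are in place, the lemma follows immediately, with no further estimates needed.
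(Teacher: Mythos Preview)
Your proposal is correct and follows essentially the same route as the paper: identify a medial edge $e$ between $B_x$ and a white diamond on the free arc, use the deterministic winding at a boundary edge to get $|F(e)|=\mathbb{P}_{\mathcal{D},a,b}(x\leadsto\text{wired arc})$, and then apply Proposition~\ref{uniform comparability}. The paper's proof is slightly terser about the geometric identification you flag as the ``one point requiring care,'' but the substance is identical.
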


\begin{proof}
Since $x$ is on the free boundary of $\mathcal{D}$, there exists a white diamond on the free arc of $\mathcal{D}_{\diamond}$ which is adjacent to $B_x$: we denote by $e$ the edge between these diamonds. As noted before, since the edge $e$ is along the free arc, the winding $W_{\gamma}(e_a,e)$ of the exploration path $\gamma$ at $e$ is constant, and depends only on the direction of $e$. This implies that
$$\mathbb{P}_{\mathcal{D},a,b}(e\in \gamma)=|F(e)|.$$
In addition, $e$ belongs to $\gamma$ if and only if $x$ is connected to the wired arc, which implies that $|F(e)|$ is exactly equal to $\mathbb{P}_{\mathcal{D},a,b}(x\leadsto \text{wired arc})$. Proposition \ref{uniform comparability} thus implies the claim.
\end{proof}

With this lemma at our disposal, we can prove the different estimates. Throughout the proof, we use the notation $c_i(\beta)$ for constants that depend neither on $n$ nor on sites $x$, $y$ or on boundary conditions. When they do not depend on $\beta$, we denote them by $c_i$ (it is the case for the upper bounds). Recall the definition of $R_n^{\beta}$:
\begin{equation}
R_n^{\beta} = \llbracket -\beta n,\beta n \rrbracket \times \llbracket 0,2n\rrbracket.
\end{equation}
Let $\partial_+R_n^{\beta}$ (resp. $\partial_-R_n^{\beta}$) be the \emph{top side} $\llbracket -\beta n,\beta n \rrbracket \times\{2n\}$ (resp. \emph{bottom side} $\llbracket -\beta n,\beta n \rrbracket \times\{0\}$) of the rectangle $R_n^{\beta}$. We begin with a lower bound on connection probabilities.

\begin{proposition}[connection probability for one point on the bottom side]\label{probability that one point belongs to gamma}
Let $\beta>0$, there exists a constant $c(\beta)>0$ such that for any $n\geq 1$,
\begin{equation}
\P_{R_n^{\beta}}^0(x\leadsto u)\geq \frac{c(\beta)}{n}
\end{equation}
for all $x=(x_1,0)\in \partial_-R_n^{\beta}$, $u=(u_1,2n)\in \partial_+R_n^{\beta}$, satisfying $|x_1|, |u_1| \leq\beta n/2$.
\end{proposition}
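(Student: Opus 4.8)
The plan is to reduce the connection probability $\P_{R_n^\beta}^0(x \leadsto u)$ to a statement about a fermionic observable by introducing Dobrushin boundary conditions, and then to apply the harmonic measure estimates of Section 3 twice: once from the bottom point and once from the top point, linked by a half-plane symmetry argument. Concretely, I would first observe that by the FKG inequality it suffices to bound below the probability that $x$ is connected to $u$ \emph{along a path that also reaches, say, the left side}, or — more in the spirit of the paper — to set up a Dobrushin domain in which ``$x \leadsto u$'' becomes ``$x \leadsto$ wired arc'' for an appropriate wired arc containing $u$. So: take $\mathcal D = R_n^\beta$, let the wired arc be a short arc of $\partial_e R_n^\beta$ around $u$ (say the single top edge or a bounded neighbourhood of $u$ on the top side) and the free arc be the complement. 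Then Lemma \ref{interpretation H} gives
\begin{equation}
\P_{\mathcal D, a, b}(x \leadsto \text{wired arc}) \geq \sqrt{\Hw(W_x)},
\end{equation}
and by monotonicity in boundary conditions (free arc $\leq$ free b.c. on all of $R_n^\beta$ off the wired arc, and the wired arc only helps) one checks that $\P_{\mathcal D,a,b}(x \leadsto \text{wired arc}) \leq \P^0_{R_n^\beta}(x \leadsto u) + (\text{error})$ — more precisely, that $x$ connected to a bounded top-arc around $u$ is comparable, up to a constant factor coming from crossing a bounded region near $u$, to $x \leadsto u$ itself, using that $u$ is at distance $\geq \beta n/2$ from the corners so there is room for a local FKG gluing.

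The heart of the matter is then the lower bound $\Hw(W_x) \geq c(\beta)^2 / n^2$, which is exactly the content of Lemma \ref{2 points estimate}: the domain $R_n^\beta$ with $x$ and $u$ both at least $\beta n/2$ from the corners is precisely the setting ``$(R_n^\beta, u, u)$'' in that lemma (up to identifying the wired arc around $u$ with the degenerate Dobrushin marked point). So $\sqrt{\Hw(W_x)} \geq \sqrt{c_1(\beta)}/n$, which gives the $\Theta(1/n)$ bound — note the square root is what turns the $n^{-2}$ harmonic-measure estimate into the claimed $n^{-1}$ connection probability, consistent with the boundary one-arm exponent $1/2$ advertised in the introduction. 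I would present the argument so that the only genuinely new work beyond Lemma \ref{2 points estimate} and Lemma \ref{interpretation H} is the bookkeeping that relates ``connected to a bounded arc near $u$'' with Dobrushin b.c. to ``connected to $u$'' with free b.c.

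The step I expect to be the main obstacle is precisely this last comparison: Lemma \ref{interpretation H} is phrased for connection to the \emph{entire} wired arc of a Dobrushin domain, whereas the proposition wants connection to a single vertex $u$ under \emph{free} boundary conditions on the whole rectangle. Bridging these requires (a) choosing the wired arc small enough that ``$x \leadsto$ wired arc'' is not much more likely than ``$x \leadsto u$'' — here one uses that a path reaching any point of a bounded arc around $u$ can be extended to $u$ with probability bounded below, by FKG and a crossing estimate in a bounded box near $u$ (away from corners, so this box fits inside $R_n^\beta$); and (b) checking that passing from the Dobrushin measure $\P_{\mathcal D,a,b}$ to $\P^0_{R_n^\beta}$ only decreases the relevant probability, which follows from \eqref{comparison between boundary conditions} since the event is increasing and the wired arc adds connections. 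One must also verify that shrinking the wired arc does not spoil the hypotheses of Lemma \ref{2 points estimate} (the constant $c_1(\beta)$ there is stated for the point-marked domain $(R_n^\beta,u,u)$, which is the limiting case as the wired arc shrinks to $\{u\}$); alternatively one keeps the wired arc of bounded size throughout and absorbs the harmless dependence into $c(\beta)$. Once these comparisons are in place the proof is a one-line chain: $\P^0_{R_n^\beta}(x\leadsto u) \gtrsim \P_{\mathcal D,a,b}(x \leadsto \text{wired arc}) \geq \sqrt{\Hw(W_x)} \geq \sqrt{c_1(\beta)}/n =: c(\beta)/n$.
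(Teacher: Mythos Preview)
Your core strategy---combine Lemma \ref{interpretation H} with the harmonic-measure lower bound of Lemma \ref{2 points estimate}---is exactly what the paper does, and the square-root conversion from $n^{-2}$ to $n^{-1}$ is right. But you have manufactured a difficulty that the paper simply doesn't face, and in doing so you have introduced a genuine direction-of-inequality error.

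The paper takes the wired arc to be the \emph{single vertex} $u$, i.e.\ works in the degenerate Dobrushin domain $(R_n^\beta,u,u)$. Wiring a single vertex imposes no extra connections, so the measure $\P_{R_n^\beta,u,u}$ is \emph{identically} the free-boundary measure $\P^0_{R_n^\beta}$, and ``$x\leadsto$ wired arc'' is \emph{identically} ``$x\leadsto u$''. There is no bookkeeping, no FKG gluing, no comparison of measures: the proof is literally
\[
\P^0_{R_n^\beta}(x\leadsto u)=\P_{R_n^\beta,u,u}(x\leadsto\text{wired arc})\ \geq\ \sqrt{\Hw(W_x)}\ \geq\ \sqrt{c_1(\beta)}\,/\,n,
\]
where the two inequalities are Lemma \ref{interpretation H} and Lemma \ref{2 points estimate} (the latter is already stated for the domain $(R_n^\beta,u,u)$, as you noticed).

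Your detour through a non-degenerate arc around $u$ is not only unnecessary, it doesn't close as written. In step (b) you correctly observe that the Dobrushin measure with a wired arc stochastically dominates the free measure; but that gives $\P_{\mathcal D,a,b}(\cdot)\geq \P^0_{R_n^\beta}(\cdot)$ for increasing events, which is the \emph{wrong direction} for your final chain $\P^0_{R_n^\beta}(x\leadsto u)\gtrsim \P_{\mathcal D,a,b}(x\leadsto\text{wired arc})$. The FKG gluing in (a) lets you pass from ``$x\leadsto u$'' to ``$x\leadsto$ arc'' under $\P^0$, but you then need a lower bound on $\P^0(x\leadsto\text{arc})$, and Lemma \ref{interpretation H} only gives you a lower bound under the Dobrushin measure---which is larger, not smaller. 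The resolution is precisely to shrink the arc to the point $\{u\}$, so that the two measures coincide; once you do that, steps (a) and (b) evaporate and you are left with the paper's two-line argument.
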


\begin{proof}
The probability that $x$ and $u$ are connected in the rectangle with free boundary conditions can be written as the probability that $x$ is connected to the wired arc in $(R_n^{\beta},u,u)$ (where the wired arc consists of a single vertex). The previous lemma, together with the estimate of Lemma \ref{2 points estimate}, concludes the proof.
\end{proof}

We now study the probability that two boundary points on the bottom edge of $R_n^{\beta}$ are connected to the top edge.

\begin{proposition}[connection probability for two points on the bottom side]\label{probability that two points belong to gamma}
There exists a constant $c>0$ (uniform in $\beta,n$) such that for any rectangle $R_n^{\beta}$ and any two points $x,y$ on the bottom side $\partial_-R_n^{\beta}$,
\begin{equation}
\P_{R_n^{\beta},a_n,b_n}(x,y\leadsto \text{wired arc})\leq \frac{c}{\sqrt{|x-y|n}},
\end{equation}
where $a_n$ and $b_n$ denote respectively the top-left and top-right corners of the rectangle $R_n^{\beta}$. 
\end{proposition}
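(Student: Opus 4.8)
## Proof proposal for Proposition~\ref{probability that two points belong to gamma}

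The plan is to reduce the two-point connection event to a single boundary-connection event in a \emph{slit} Dobrushin domain, and then apply Lemma~\ref{interpretation H} together with the harmonic measure upper bounds from Lemma~\ref{upper_bound}. Without loss of generality, order the two points so that $x$ is closer to the left corner $a_n$ than $y$ (or symmetrically), and assume $|x-y| \leq$ (const)$\cdot n$, the complementary regime being trivial since the probability of even a single connection is bounded by a constant. The key observation is the domain Markov property combined with the exploration-path picture: on the event $\{x \leadsto \text{wired arc}\}$, one can explore the interface $\gamma$ (equivalently, the lowest open crossing from the wired arc down to $x$) started from the left corner, and condition on it. By the remark on slit Dobrushin domains in Section~2, conditionally on the explored portion of $\gamma$ up to the first time it reaches (a medial edge adjacent to) $x$, the law of the model in the remaining domain is again an FK Ising measure with Dobrushin boundary conditions, where now $x$ sits on the new wired arc.

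Concretely, I would write
$$\P_{R_n^{\beta},a_n,b_n}(x,y\leadsto \text{wired arc}) = \E_{R_n^{\beta},a_n,b_n}\Big[\mathbb{I}_{x\leadsto \text{wired arc}}\, \P_{\mathcal{D}',a',b'}(y \leadsto \text{wired arc})\Big],$$
where $(\mathcal{D}',a',b')$ is the (random) slit Dobrushin domain obtained after revealing the interface realizing the connection of $x$ to the original wired arc. In $\mathcal{D}'$, the point $y$ lies on the free arc, and crucially the explored interface passes near $x$; so in $\mathcal{D}'$ there is a segment of length $O(1)$ — essentially a single lattice edge at $x$ — disconnecting the wired arc of $\mathcal{D}'$ from $y$ along the bottom side, \emph{and} the wired arc of $\mathcal{D}'$ is at graph distance at least $\mathrm{const}\cdot|x-y|$ from $y$ in the relevant direction, while also being at distance at least $\mathrm{const}\cdot n$ from $y$ through the bulk of the rectangle. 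Applying Lemma~\ref{interpretation H} gives $\P_{\mathcal{D}',a',b'}(y \leadsto \text{wired arc}) \leq \sqrt{\Hb(B_y)}$, and then the disconnecting-segment estimate (Lemma~\ref{upper_bound}(\ref{upperboundarc})) with $n \asymp 1$ and $|k| \asymp |x-y|$ yields $\Hb(B_y) \leq c/|x-y|$; separately, or by an interpolation/iteration between the two scales $|x-y|$ and $n$, one obtains the sharpened bound $\Hb(B_y) \leq c/\sqrt{|x-y|\,n}$ reflecting that $y$ must connect \emph{both} over distance $|x-y|$ past the slit \emph{and} over distance $n$ across the rectangle. Taking expectations and using $\P(x\leadsto \text{wired arc}) \leq 1$ finishes the bound — though to get the exact exponent $1/2$ one more likely argues directly in $\mathcal{D}'$ that the harmonic measure of the wired arc seen from $B_y$, in a half-plane-type domain with the wired arc ``hidden'' behind a length-$|x-y|$ slit at distance $n$, decays like $\sqrt{|x-y|}/n \asymp 1/\sqrt{|x-y|\,n}$ when $|x-y|\asymp n$, and like the minimum of the two one-scale bounds in general, which is always $\leq c/\sqrt{|x-y|\,n}$.

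The main obstacle I anticipate is controlling the geometry of the random slit domain $\mathcal{D}'$ uniformly: one must ensure that \emph{regardless} of how the interface from $x$ meanders, it still leaves $y$ separated from the remaining wired arc by a controllably short segment near $x$ while keeping $y$ far (distance $\asymp n$) from that wired arc in the bulk — so that Lemma~\ref{upper_bound} applies with the right parameters. This requires a careful case analysis depending on whether $y$ is ``inside'' or ``outside'' the explored interface and on the position of $x,y$ relative to the corners, and it is exactly here that uniformity in the boundary conditions and in $n$ must be extracted. A secondary technical point is the passage from the crude one-scale bound $c/|x-y|$ to the sharp $c/\sqrt{|x-y|n}$: this is where the two characteristic scales of the problem genuinely interact, and I expect it to need either an explicit harmonic-measure computation in a model slit half-plane domain (conformal map / reflection argument, done discretely via the random-walk representation of $\Hb$) or a multiplicative iteration of the comparison principle over dyadic scales between $|x-y|$ and $n$.
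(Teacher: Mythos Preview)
Your reduction to the slit domain $\mathcal{D}'$ is the right starting point and matches the paper, but the argument breaks at the step where you claim that a segment of length $O(1)$ at $x$ disconnects $y$ from the new wired arc. This is false in general: the exploration path $\gamma[0,T_x]$ may wander arbitrarily close to $y$ (in the bulk, without touching $e_y$) before reaching $e_x$, and its primal side then becomes part of the wired arc of $\mathcal{D}'$. On that event the distance from $y$ to the wired arc of $\mathcal{D}'$ can be of order $1$, so neither Lemma~\ref{upper_bound}(\ref{upperbounddistance}) nor (\ref{upperboundarc}) yields a uniform bound on $\P_{\mathcal{D}'}(y\leadsto\text{wired arc})$, and no short segment near $x$ separates $y$ from that nearby portion of the wired arc. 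This is precisely the obstacle you flag at the end, and it cannot be removed by a case analysis on the static geometry: one must \emph{pay} probabilistically for the event that $\gamma$ comes close to $y$.

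The paper's fix is a dyadic decomposition on the scale at which $\gamma$ first approaches $y$ before hitting $e_x$: with $T_k$ the hitting time of $\mathcal{B}_{2^k}(y)$, one splits over $\{T_{k+1}<T_k=\infty\}$ for $k\le\log_2|x-y|$. On each piece three estimates are multiplied: (a) conditioning on $\gamma[0,T_x]$, the point $y$ is at distance $\ge 2^k$ from the wired arc, so Lemma~\ref{upper_bound}(\ref{upperbounddistance}) gives a factor $2^{-k/2}$; (b) conditioning on $\gamma[0,T_{k+1}]$, a vertical segment of height $\le 2^{k+1}$ at horizontal distance $\asymp|x-y|$ disconnects $x$ from the wired arc, so Lemma~\ref{upper_bound}(\ref{upperboundarc}) gives a factor $2^{k/2}/|x-y|$; and (c) the event $\{T_{k+1}<\infty\}$ itself costs $\sqrt{2^k/n}$ by the one-arm estimate of Lemma~\ref{upper bound for Tk}. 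Summing the resulting $2^{k/2}/(|x-y|\sqrt{n})$ over $k$ produces $1/\sqrt{|x-y|\,n}$. Your proposal is missing both this decomposition and the essential third input Lemma~\ref{upper bound for Tk}; the ``interpolation between scales'' you hope for does not exist as a single harmonic-measure estimate in a fixed slit domain, because the two scales $|x-y|$ and $n$ enter through genuinely different stopping times of the exploration.
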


The proof is based on the following lemma, which is a strong form of the so-called half-plane one-arm probability estimate (see Section \ref{one_arm_exponent} for a further discussion of this result). For $x$ on the bottom side of $R_n^{\beta}$ and $k\geq 1$, we denote by $\mathcal{B}_k(x)$ the box centered at $x$ with diameter $k$ for the \emph{graph distance}. We can now state the lemma needed:

\begin{lemma}\label{upper bound for Tk}
There exists a constant $c_5>0$ (uniform in $n$, $\beta$ and the choice of $x$) such that for all $k \geq0$,
\begin{equation} \label{strong_one_arm}
\P_{R_n^{\beta},a_n,b_n}(\mathcal{B}_k(x) \leadsto\text{wired arc})\leq c_5\sqrt{\frac{k}{n}}.
\end{equation}
\end{lemma}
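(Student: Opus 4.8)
The plan is to reduce the statement about the box $\mathcal{B}_k(x)$ to the one-point connection estimate already available through Lemma~\ref{interpretation H} and the harmonic-measure bounds of Lemma~\ref{upper_bound}. First I would observe that the event $\{\mathcal{B}_k(x)\leadsto\text{wired arc}\}$ is increasing, and that $\mathcal{B}_k(x)$ touches the free arc (the bottom side near $x$). The natural move is to explore the interface, or rather to use the Domain Markov property: if $\mathcal{B}_k(x)$ is connected to the wired arc, then there is an open path from the wired arc to some site of $\partial\mathcal{B}_k(x)$, and we may condition on the ``first'' such path (the outermost open path surrounding the portion of the free arc near $x$, discovered from the wired-arc side). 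Conditionally on this cluster, the region near $x$ inside $\mathcal{B}_k(x)$ becomes a new Dobrushin $\mathbb{H}$-domain whose wired arc is (a subset of) $\partial\mathcal{B}_k(x)$, and in that domain the conditional probability that $x$ itself is connected to the (new) wired arc is, by Lemma~\ref{interpretation H} together with Lemma~\ref{upper_bound}(\ref{upperbounddistance}), at least of order $1/\sqrt{k}$ (since the distance from $x$ to $\partial\mathcal{B}_k(x)$ is of order $k$, so $\Hb(B_x)\leq c_3/k$, but we need the \emph{lower} bound $\Hw(W_x)\geq c/k$ from Lemma~\ref{lower_bound} applied in the new domain).

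More precisely, the argument is a conditional expectation / ``spatial'' second-moment-flavored bound: writing $T_k$ for the event in \eqref{strong_one_arm} and $\mathcal{A}_x$ for the event $\{x\leadsto\text{wired arc}\}$, one has $\P(\mathcal{A}_x)\geq \P(\mathcal{A}_x\mid T_k)\,\P(T_k)$, so it suffices to show (a) $\P_{R_n^\beta,a_n,b_n}(\mathcal{A}_x)\leq c/\sqrt{n}$, which follows directly from Lemma~\ref{interpretation H} and Lemma~\ref{upper_bound}(\ref{upperbounddistance}) since the wired arc (the top side) is at distance $\sim n$ from $x$; and (b) $\P_{R_n^\beta,a_n,b_n}(\mathcal{A}_x\mid T_k)\geq c/\sqrt{k}$. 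For (b) I would, on the event $T_k$, reveal the outermost open path joining $\partial\mathcal{B}_k(x)$ to the wired arc and everything ``outside'' it; what remains is a domain containing $x$ on its free arc, in which $\partial\mathcal{B}_k(x)$ (or part of it) plays the role of the wired arc and is at graph distance $\geq k/2$ from $x$ in all directions. Crucially, this residual domain is still a Dobrushin $\mathbb{H}$-domain (or can be reduced to one by monotonicity and reflection), and the segment $l_{k/2}(-k/2)$ — or an analogous vertical segment — disconnects the rest of the free arc from $x$, so Lemma~\ref{lower_bound} gives $\Hw(W_x)\geq c_2/(k/2)$, hence by Lemma~\ref{interpretation H} the conditional probability that $x$ reaches the new wired arc is $\geq \sqrt{2c_2/k}$. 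Combining, $c/\sqrt n \geq \P(\mathcal{A}_x)\geq (c'/\sqrt k)\,\P(T_k)$, which rearranges to $\P(T_k)\leq c_5\sqrt{k/n}$.

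The main obstacle is making the conditioning step (b) rigorous: one must identify an explorable ``innermost/outermost'' open path whose revelation leaves behind a bona fide Dobrushin $\mathbb{H}$-domain in which $x$ sits on the free arc and the revealed path serves as wired arc, and one must check that the disconnecting-segment hypothesis of Lemma~\ref{lower_bound} is genuinely met in that residual domain regardless of the (random) shape of the revealed path. The geometry here is slightly delicate because the revealed cluster can be wild; the clean way is to condition on the connected component of the free arc (union the revealed open path) that separates $x$ from the wired arc, use the Domain Markov property to see that inside the remaining domain the law is again an FK~Ising measure with Dobrushin-type boundary conditions, and then invoke monotonicity in boundary conditions \eqref{comparison between boundary conditions} to replace whatever boundary conditions arise by the worst case, which is still covered by Lemma~\ref{lower_bound}. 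A second, more cosmetic point is handling the case $k$ comparable to or larger than $n$, where the bound $c_5\sqrt{k/n}$ is only informative for $k\lesssim n$ and is trivial (being $\geq 1$) otherwise; and the case of small $k$, where one should simply absorb everything into the constant. Once these are dispatched, the estimate follows by the two-line chain of inequalities above.
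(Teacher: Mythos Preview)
Your approach is essentially the paper's: the same conditional inequality $\P(z\leadsto\text{wired arc})\geq \P(z\leadsto\text{wired arc}\mid T_k)\,\P(T_k)$, fed by the $1/\sqrt{n}$ upper bound from Lemma~\ref{upper_bound}(\ref{upperbounddistance}) and a $1/\sqrt{k}$ conditional lower bound from Lemma~\ref{lower_bound}. The paper disposes of your ``main obstacle'' in step (b) by three clean choices: it first enlarges to $(R_n^{\beta+1},c_n,d_n)$ so the free arc is exactly the bottom side and $\mathcal{B}_k(x)\subset R_n^{\beta+1}$; it conditions on the exploration \emph{interface} $\gamma[0,T]$ (with $T$ the hitting time of the medial edges adjacent to $\mathcal{B}_k(x)$), which automatically yields a slit Dobrushin domain; and it applies the one-point estimate at the \emph{rightmost} site $z$ of $\mathcal{B}_k(x)$ rather than at $x$, so that $\gamma(T)$ always lies to the left of $z$ and the vertical segment $z+l_r(-r)$ through $\gamma(T)$ disconnects the free arc from $z$, making Lemma~\ref{lower_bound} apply without any further case analysis.
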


\begin{figure}
\begin{center}
\includegraphics[width=6cm]{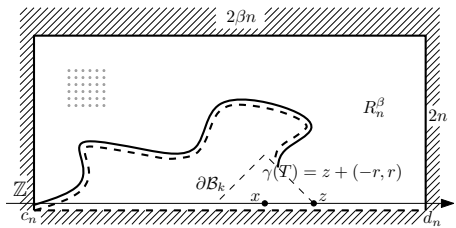}
\caption{\label{fig:image}The Dobrushin domain $(R_n^{\beta},c_n,d_n)$, together with the exploration path up to time $T$.}
\end{center}
\end{figure}

\begin{proof}
Consider $n,k,\beta>0$, and the box $R_n^{\beta}$ with one point $x\in \partial_-R_n^{\beta}$. Eq.(\ref{strong_one_arm}) becomes trivial if $k\geq n$, so we can assume that $k\leq n$. For any choice of $\beta'\geq \beta$, the monotonicity between boundary conditions Eq.(\ref{extremality}) implies that the probability that $\mathcal{B}_k(x)$ is connected to the wired arc $\partial_+R_n^{\beta}$ in $(R_n^{\beta},a_n,b_n)$ is smaller than the probability that $\mathcal{B}_k(x)$ is connected to the wired arc in the Dobrushin domain $(R_n^{\beta'},c_n,d_n)$, where $c_n$ and $d_n$ are the bottom-left and bottom-right corners of $R_n^{\beta'}$. From now on, we replace $\beta$ by $\beta+1$, and we work in the new domain $(R_n^{\beta},c_n,d_n)$. Notice that $\mathcal{B}_k$ is then included in $R_n^{\beta}$ and that the right-most site of $\mathcal{B}_k$ is at a distance at least $n$ from the wired arc.

We denote by $T$ the hitting time -- for the exploration path naturally para-metrized by the number of steps -- of the subset of the medial lattice composed of the edges adjacent to $\mathcal{B}_k(x)$; we set $T=\infty$ if the exploration path never reaches this set, so that $x$ is connected to $\mathcal{B}_k$ if and only if $T<\infty$.

Let $z$ be the right-most site of the box $\mathcal{B}_k(x)$. Consider now the event $\{z\leadsto \text{wired arc}\}$. By conditioning on the curve up to time $T$ (and on the event $\{\mathcal{B}_k(x)\leadsto \text{wired arc}\}$), we obtain
\small\begin{align*}
\P_{R_n^{\beta},c_n,d_n}(z \leadsto \text{wired arc})& =\E_{R_n^{\beta},c_n,d_n}\big[\mathbb{I}_{T<\infty}\P_{R_n^{\beta},c_n,d_n}(z\leadsto \text{wired arc} \:|\: \g[0,T])\big]\\
& = \E_{R_n^{\beta},c_n,d_n}\big[\mathbb{I}_{T<\infty}\P_{R_n^{\beta}\setminus \g[0,T],\g(T),d_n}(z\leadsto \text{wired arc})\big],
\end{align*}\normalsize
where in the second inequality, we have used the Domain Markov property, and also the fact that it is sufficient for $z$ to be connected to the wired arc in the new domain (since it is then automatically connected to the wired arc of the original domain). 

On the one hand, since $z$ is at a distance at least $n$ from the wired arc (thanks to the new choice of $\beta$), we can combine Proposition \ref{uniform comparability} and Lemma \ref{interpretation H} with item (\ref{upperbounddistance}) of Lemma \ref{upper_bound} to obtain
\begin{equation}\label{zz}
\mathbb{P}_{R_n^{\beta},c_n,d_n}(z\leadsto \text{wired arc})\leq \frac{c_5}{\sqrt{n}}.
\end{equation}
On the other hand, if $\gamma(T)$ can be written as $\gamma(T)=z+(-r,r)$, with $0 \leq r \leq k$, then the arc $z+l_r(-r)$ disconnects the free arc from $z$ in the domain $R_n^{\beta} \setminus \gamma[0,T]$, while if $\gamma(T)=z+(-r,2k-r)$, with $k+1 \leq r \leq 2k$, then the arc $z+l_r(-r)$ still disconnects the free arc from $z$. Using once again Proposition \ref{uniform comparability} and Lemma \ref{interpretation H}, this time with Lemma \ref{lower_bound}, we obtain that a.s.
\begin{equation}\label{zzz}
\P_{R_n^{\beta}\setminus \g(-\infty,T),\g(T),d_n}(z \leadsto \text{wired arc}) \geq \frac{c_4}{\sqrt{r}}\geq \frac{c_4}{\sqrt{2k}}.
\end{equation}
This estimate being uniform in the realization of $\gamma[0,T]$, we obtain
\begin{equation}
\frac{c_4}{\sqrt{2k}}\P_{R_n^{\beta},c_n,d_n}(T<\infty)\leq \P_{R_n^{\beta},c_n,d_n}(z\leadsto \text{wired arc})\leq \frac{c_5}{\sqrt{n}},
\end{equation}
which implies the desired claim Eq.(\ref{strong_one_arm}).
\end{proof}

\begin{figure}
\begin{center}
\includegraphics[width=12cm]{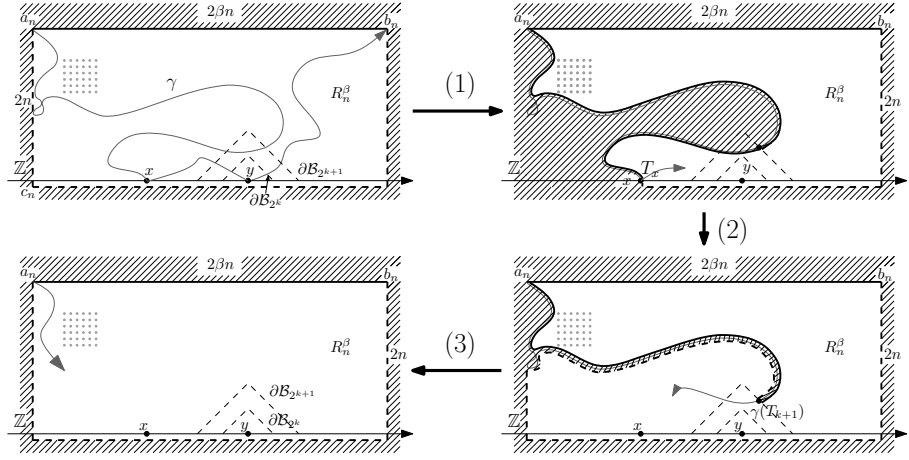}
\caption{\label{fig:image2}This picture presents the different steps in the proof of Proposition \ref{probability that two points belong to gamma}: we first (1) condition on $\gamma[0,T_x]$ and use the uniform estimate (\ref{upperbounddistance}) of Lemma \ref{upper_bound}, then (2) condition on $\gamma[0,T_{k+1}]$ and use the estimate (\ref{upperboundarc}) of Lemma \ref{upper_bound}, in order to (3) conclude with Lemma \ref{upper bound for Tk}.}
\end{center}
\end{figure}

\begin{proof}[Proof of Proposition \ref{probability that two points belong to gamma}]

Let us take two sites $x$ and $y$ on $\partial_-R_n^{\beta}$. As in the previous proof, the larger the $\beta$, the larger the corresponding probability, we can thus assume that $\beta$ has been chosen in such a way that there are no boundary effects. In order to prove the estimate, we express the event considered in terms of the exploration path $\g$. If $x$ and $y$ are connected to the wired arc, $\g$ must go through two boundary edges which are adjacent to $x$ and $y$, that we denote by $e_x$ and $e_y$. Notice that $e_x$ has to be discovered by $\g$ before $e_y$ is. 

We now define $T_x$ to be the hitting time of $e_x$, and $T_k$ to be the hitting time of the set of edges adjacent to the ball $\mathcal{B}_{2^k}(y)$, for $k \leq k_0=\lfloor \log_2|x-y| \rfloor$. If the exploration path does not cross this ball before hitting $e_x$, we set $T_k = \infty$. With these definitions, the probability that $e_x$ and $e_y$ are both on $\g$ can be expressed as
\begin{align}
&\P_{R_n^{\beta},a_n,b_n}(x,y\leadsto \text{wired arc})=\P_{R_n^{\beta},a_n,b_n}(e_x,e_y\in\g)\\
&=\sum_{k=0}^{k_0}\P_{R_n^{\beta},a_n,b_n}(e_y\in \g,T_x<\infty,T_{k+1}<T_{k}=\infty)\\
&=\sum_{k=0}^{k_0}\E_{R_n^{\beta},a_n,b_n}\big[\mathbb{I}_{T_{k+1}<T_k=\infty}\mathbb{I}_{T_x<\infty}\P_{R_n^{\beta},a_n,b_n}(e_y\in \g \left|\g[0,T_x]\right.)\big],
\label{estimation part 1}
\end{align}
where the second equality is obtained by conditioning on the exploration path up to time $T_x$. Recall that $e_y$ belongs to $\gamma$ if and only if $y$ is connected to the wired arc. Moreover, $y$ is at a distance at least $2^k$ from the wired arc in $R_n^{\beta}\setminus\gamma[0,T_x]$ (since $T_k=\infty$). Hence, the Domain Markov property and item (\ref{upperbounddistance}) of Lemma \ref{upper_bound} give that a.s.
\begin{equation}
\P_{R_n^{\beta},a_n,b_n}(e_y\in \g\left|\g[0,T_x]\right.)=\P_{R_n^{\beta}\setminus \g[0,T_x],x,b_n}(y\leadsto\text{wired arc})\leq \frac{c_3}{\sqrt{2^k}}.
\end{equation}
By plugging this uniform estimate into (\ref{estimation part 1}), and removing the condition on $T_k=\infty$, we obtain 
\small\begin{align*}
\P_{R_n^{\beta},a_n,b_n}(e_x,e_y\in\g)&\leq \sum_{k=0}^{k_0}\frac{c_3}{\sqrt{2^k}} \E_{R_n^{\beta},a_n,b_n}\big[\mathbb{I}_{T_{k+1}<\infty} \P_{R_n^{\beta},a_n,b_n}(T_x<\infty\left | \g[0,T_{k+1}]\right.)\big],
\end{align*}\normalsize
where we conditioned on the path up to time $T_{k+1}$. Now, $e_x$ belongs to $\gamma$ if and only if $x$ is connected to the wired arc, and in addition, the vertical segment connecting $\gamma(T_{k+1})$ to $\Z$, of length at most $2^{k+1}$, disconnects the wired arc from $x$ in the domain $R_n^{\beta}\setminus \gamma[0,T_{k+1}]$. Applying the Domain Markov property and item (\ref{upperboundarc}) of Lemma \ref{upper_bound}, we deduce that a.s.
\small\begin{align*}
\P_{R_n^{\beta},a_n,b_n}(e_x\in \g\left|\g[0,T_{k+1}]\right.)&=\P_{R_n^{\beta}\setminus \g[0,T_{k+1}],\g(T_{k+1}),b_n}(x\leadsto\text{wired arc})\leq c_4\frac{\sqrt{2^{k+1}}}{|x-y|}.
\end{align*}\normalsize
Making use of this uniform bound, we obtain
\begin{align*}
\P_{R_n^{\beta},a_n,b_n}(x,y\leadsto \text{wired arc})& \leq c_3 c_4 \sum_{k=0}^{k_0} \frac{\sqrt{2^{k+1}}}{\sqrt{2^k}|x-y|} \P_{R_n^{\beta},a_n,b_n}(T_{k+1}<\infty)\\
& \leq \frac{2c_3c_4c_5}{|x-y|\sqrt{n}}\sum_{k=0}^{k_0}\sqrt{2^{k}}\\
& \leq \frac{c}{\sqrt{n|x-y|}},
\end{align*}
using also Lemma \ref{upper bound for Tk} for the second inequality.

\end{proof}

We are now in a position to prove our main result.

\bigskip

\begin{proof}[Proof of Theorem \ref{RSW}]

Let $\beta>0$ and $n>0$, and also $R_n^{\beta}$ defined as previously. 

\bigskip

\noindent \textbf{Step 1: lower bound for free boundary conditions.} Let $N_n$ be the number of connected pairs $(x,u)$, with $x\in \partial_-R_n^{\beta}$, and $u\in \partial_+R_n^{\beta}$. The expected value of this quantity is equal to
\begin{equation}
\E_{R_n^{\beta}}^0[N_n]=\sum_{\substack{u\in \partial_+R_n^{\beta}\\ x\in \partial_-R_n^{\beta}}}\P_{R_n^{\beta}}^0(x \leadsto u).
\end{equation}
Proposition \ref{probability that one point belongs to gamma} directly provides the following lower bound on the expectation by summing on the $(\beta n)^2$ pairs of points $(x,u)$ far enough from the corners, \emph{i.e.} satisfying the condition of the proposition:
\begin{equation}
\E_{R_n^{\beta}}^0[N_n]\geq c_6(\beta)n
\end{equation}
for some $c_6(\beta) >0$.

On the other hand, if $x$ and $u$ (resp. $y$ and $v$) are pair-wise connected, then they are also connected to the horizontal line $\Z\times\{n\}$ which is (vertically) at the middle of $R_n^{\beta}$. Moreover, the Domain Markov property implies that the probability -- in $R_n^{\beta}$ with free boundary conditions -- that $x$ and $y$ are connected to this line is smaller than the probability of this event in the rectangle of half height with wired boundary conditions on the top side. In the following, we assume without loss of generality that $n$ is even and we set $m=n/2$, so that the previous rectangle is $R^{2\beta}_m$, and we define $a_m$ and $b_m$ as before. Using the FKG inequality, and also the symmetry of the lattice, we get
\small\begin{align*}
\P_{R_n^{\beta}}^0(x\leadsto u & ,y\leadsto v)& \leq \P_{R_{m}^{2\beta},a_m,b_m}(x,y\leadsto \text{wired arc})\:\P_{R_m^{2\beta},a_m,b_m}(\bar{u},\bar{v}\leadsto \text{wired arc}),
\end{align*}\normalsize
where $\bar{u}$ and $\bar{v}$ are the projections on the real axis of $u$ and $v$. Summing the bound provided by Proposition \ref{probability that two points belong to gamma} on all sites $x,y\in \partial_-R_n^{\beta}$ and $u,v\in\partial_+R_n^{\beta}$, we obtain
\begin{equation}
\E_{R_n^{\beta}}^0[N_n^2]\leq c_7 m^2\leq c_7n^2
\end{equation}
for some constant $c_7>0$. Now, by the Cauchy-Schwarz inequality,
\begin{equation}
\P_{R_n^{\beta}}^0(\C_v(R_n^{\beta}))=\P_{R_n^{\beta}}^0(N_n>0) = \E_{R_n^{\beta}}^0[(\mathbb{I}_{N_n>0})^2] \geq \frac{\E_{R_n^{\beta}}^0[N_n]^2}{\E_{R_n^{\beta}}^0[N_n^2]}\geq c_6(\beta)^2/c_7,
\end{equation}
since $\E_{R_n^{\beta}}^0[N_n] = \E_{R_n^{\beta}}^0[N_n \mathbb{I}_{N_n>0}]$. We have thus reached the claim.

\bigskip

\noindent \textbf{Step 2: lower and upper bounds for general boundary conditions.} Using the ordering between boundary conditions Eq.(\ref{extremality}), the lower bound that we have just proved for free boundary conditions actually implies the lower bound for any boundary conditions $\xi$. 

For the upper bound, consider a rectangle $R$ with dimensions $n\times m$ with $m \in \llbracket \beta_1n,\beta_2n\rrbracket$ and with boundary conditions $\xi$. Using once again Eq.(\ref{extremality}), it is sufficient to address the case of wired boundary conditions, and in this case, the probability that there exists a dual crossing from the left side to the right side is at least $c_-=c_-(1/\be_2,1/\beta_1)$, since the dual model has free boundary conditions. We deduce, using the self-duality property, that
\begin{equation}
\P_{R}^{\xi}(\C_v(R))\leq 1-\P_{R}^1(\C_h^*(R))=1-\P_{R^*}^0(\C_h(R^*))\leq 1-c_-,
\end{equation}
where we use the notation $\C_h^*$ for the existence of a horizontal dual crossing, and $R^*$ is as usual the dual graph of $R$ (note that we have implicitly used the invariance by $\pi/2$-rotations). This concludes the proof of Theorem \ref{RSW}.

\end{proof}

\section{Consequences for the FK Ising and the (spin) Ising models} \label{csq}

\subsection{RSW-type crossing bound for the Ising model}

Theorem \ref{RSW} can also be applied to the Ising model, using the Edwards-Sokal coupling. However, we have to be a little careful since not all boundary conditions can ``go through this coupling''.

\begin{corollary}
Consider the Ising model with $(+)$ or free boundary conditions in a rectangle $R$ with dimensions $n$ and $m < \beta n$. There exists a constant $c_{\beta}>0$ such that
$$\mathbb{P}_R^{\textrm{free} / +}(\mathcal{C}_v^+(R)) \geq c_{\beta},$$
where $\mathcal{C}_v^+$ denotes the existence of a vertical $(+)$ crossing.
\end{corollary}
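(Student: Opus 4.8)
The plan is to deduce this from Theorem~\ref{RSW} via the Edwards--Sokal coupling, handling the two types of boundary conditions separately. First I would recall that the Edwards--Sokal coupling relates an FK configuration $\omega$ to an Ising configuration by assigning i.i.d.\ uniform signs to the FK clusters; crucially, Ising $(+)$ boundary conditions correspond (after the coupling) to \emph{wired} FK boundary conditions on the relevant arc with the cluster touching the boundary forced to have spin $+1$, while Ising free boundary conditions correspond to FK free boundary conditions. So the strategy is: produce an FK vertical crossing with the right boundary conditions using Theorem~\ref{RSW}, and then argue that this FK crossing carries a $(+)$ spin with controlled probability.

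For the \textbf{free} case, I would proceed as follows. By Theorem~\ref{RSW} applied with free boundary conditions (and appropriate $\be_1,\be_2$ determined by $\beta$), there is a vertical open FK crossing in $R$ with probability at least $c_-(\beta)>0$. Conditionally on the lowest (or any measurable choice of) such open crossing, the cluster containing it receives a uniformly random sign under the Edwards--Sokal coupling, hence is labelled $+1$ with probability $1/2$, independently of the crossing's location. This gives a vertical $(+)$ crossing with probability at least $c_-(\beta)/2$, so one may take $c_\beta = c_-(\beta)/2$. I should be slightly careful to pick the crossing cluster by a deterministic rule (e.g.\ exploring from the bottom side) so that the sign assignment is genuinely independent; this is a standard point.

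For the \textbf{$(+)$} case, the FK counterpart has wired boundary conditions with the boundary cluster forced to spin $+1$. Here I would first use Theorem~\ref{RSW} with wired boundary conditions to get a vertical open FK crossing with probability at least $c_-(\beta)>0$. Then I would argue that any open vertical crossing must, by planarity in the rectangle, be connected to the wired boundary arc (or one can enlarge slightly / use that the crossing together with the wired sides forms a connected set touching the top and bottom), so the crossing cluster is the boundary cluster, which has spin $+1$ deterministically under the $(+)$-coupling. Hence the $(+)$ vertical crossing probability is at least $c_-(\beta)$ with no factor of $1/2$ loss. Combining the two cases, $c_\beta := c_-(\beta)/2$ works for both.

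The main obstacle I anticipate is the bookkeeping of the Edwards--Sokal coupling with boundary conditions: one must verify carefully that Ising $(+)$ b.c.\ translate to FK-wired b.c.\ with a \emph{fixed} sign on the boundary cluster (not a uniformly random one), and that Ising free b.c.\ translate to FK-free b.c.; and in the free case one must pin down the measurable rule selecting the crossing cluster so that the conditional sign is exactly uniform and independent. None of this is deep, but it is the place where a sloppy argument could go wrong, so I would state the coupling precisely (citing \cite{edwards-sokal}) before invoking Theorem~\ref{RSW}. The rest is immediate.
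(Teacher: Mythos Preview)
Your proposal is correct and follows the natural route the paper has in mind; the paper does not spell out a proof of this corollary, treating it as an immediate consequence of Theorem~\ref{RSW} via the Edwards--Sokal coupling, which is exactly what you do. One minor remark: in the $(+)$ case you do not need any planarity argument---a vertical crossing by definition has its endpoints on the top and bottom sides, which are part of the wired boundary, so the crossing cluster \emph{is} the boundary cluster and inherits spin $+1$ automatically.
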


We could state this result for more general boundary conditions, for instance $(+)$ on one arc and free on the other arc. The corresponding result for $(-)$ boundary conditions is actually not expected to hold: one can notice for example that in any given smooth domain, a CLE(3) process -- the object describing the scaling limit of cluster interfaces -- a.s. does not touch the boundary.

\subsection{Power-law decay of the magnetization at criticality}

We start by stating an easy consequence of Theorem \ref{RSW}. We consider the box $S_n=\llbracket-n,n\rrbracket^2$, its boundary being denoted as usual by $\partial S_n$. We also introduce $S_{m,n}$ the annulus $S_n \setminus \mathring{S}_m$ of radius $m <n$ centered on the origin, and we denote by $\mathcal{C}(S_{m,n})$ the event that there exists an open circuit surrounding $S_m$ in this annulus.

\begin{corollary}[circuits in annuli] \label{circuits}
For every $\beta < 1$, there exists a constant $c_{\beta}>0$ such that for all $n$ and $m$, with $m \leq \beta n$,
$$\mathbb{P}_{S_{m,n}}^0(\mathcal{C}(S_{m,n})) \geq c_{\beta}.$$
\end{corollary}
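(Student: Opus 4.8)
The plan is to deduce Corollary \ref{circuits} from Theorem \ref{RSW} by the standard construction of a circuit in an annulus out of four (or a bounded number of) rectangle crossings, using the FKG inequality to combine them and the comparison between boundary conditions Eq.(\ref{extremality}) to handle the free boundary conditions on $S_{m,n}$. First I would reduce to the case where $n$ and $m$ are comparable up to a fixed factor: if $m \le \beta n$ with $\beta < 1$ fixed, it suffices to find a circuit surrounding $S_m$ inside the sub-annulus $S_{m,m'}$ with $m' = \lceil m/\sqrt{\beta'} \rceil$ for a suitable $\beta' \in (\beta,1)$, since an open circuit in $S_{m,m'}$ is automatically an open circuit in $S_{m,n}$; and by Eq.(\ref{extremality}), $\P_{S_{m,n}}^0 \leqst \P_{S_{m,m'}}^0$ is the wrong direction, so instead one observes directly that the event $\mathcal{C}(S_{m,m'})$ depends only on edges inside $S_{m,m'}$ and that, with free boundary conditions on the larger annulus $S_{m,n}$, conditioning on the configuration outside $S_{m,m'}$ produces boundary conditions dominated by the wired ones; hence by monotonicity it is enough to prove the bound for $\P_{S_{m,m'}}^{1}(\mathcal{C}(S_{m,m'})) \ge c$, \emph{uniformly in $m$}. (Alternatively, and more cleanly: one works in the annulus $S_{m,2m}$, say, with arbitrary boundary conditions, and proves a uniform lower bound there; the general $m \le \beta n$ case follows by inclusion of events.)

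Next I would cover the annulus $S_{m, 2m}$ by four overlapping rectangles of bounded aspect ratio, positioned so that crossings of them, in the appropriate directions, automatically link up to form a circuit surrounding $S_m$. Concretely, take the four rectangles to be the translates of $\llbracket -2m, 2m \rrbracket \times \llbracket m, 2m \rrbracket$ (top), $\llbracket -2m,2m\rrbracket \times \llbracket -2m,-m\rrbracket$ (bottom), $\llbracket m,2m\rrbracket \times \llbracket -2m,2m\rrbracket$ (right), $\llbracket -2m,-m\rrbracket \times \llbracket -2m,2m\rrbracket$ (left); a long-way crossing of each of the two horizontal rectangles together with a long-way crossing of each of the two vertical ones forms an open path that winds around $S_m$ inside $S_{m,2m}$. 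Each of these is a rectangle of aspect ratio in a fixed compact subinterval of $(0,\infty)$, so Theorem \ref{RSW} gives each crossing probability at least $c_-$, uniformly in $m$ and in the boundary conditions of the annulus (this is exactly where the uniformity in $\xi$ of Theorem \ref{RSW} is essential, since the restriction of $\P^0_{S_{m,2m}}$ to one of these rectangles has some unknown boundary conditions). The four crossing events are increasing, so the FKG inequality Eq.(\ref{comparison between boundary conditions}) — more precisely Eq.\,(2.2) — gives $\P^{\xi}_{S_{m,2m}}(\mathcal{C}(S_{m,2m})) \ge c_-^4 =: c$, independent of $\xi$, hence in particular for free boundary conditions.

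Finally I would patch the two reductions together: for $m \le \beta n$, the event $\{\mathcal{C}(S_{m,2m})\}$ is contained in $\{\mathcal{C}(S_{m,n})\}$ (provided $2m \le n$, which holds for $\beta \le 1/2$; for $1/2 < \beta < 1$ one either uses a slightly thinner annulus $S_{m, m''}$ with $m'' = \lceil (1+\beta)m/(2\beta)\rceil \le n$ and aspect ratios still bounded, or simply remarks the statement for $\beta < 1$ follows from the statement for $\beta = 1/2$ applied with $n$ replaced by $2m \le 2\beta n \le n$ when $\beta \le 1/2$ and needs the thinner-annulus variant otherwise), and by the Domain Markov property together with Eq.(\ref{extremality}), $\P^0_{S_{m,n}}(\mathcal{C}(S_{m,2m})) \ge \P^{\xi}_{S_{m,2m}}(\mathcal{C}(S_{m,2m}))$ for the relevant $\xi$, or one argues directly that $\P^0_{S_{m,n}}$ restricted to $S_{m,2m}$ dominates some fixed-boundary-condition measure — in any case the uniform bound $c = c_\beta$ survives. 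The only genuine subtlety, and the "main obstacle", is bookkeeping the boundary conditions correctly: one must make sure that whatever measure governs the chosen annulus, Theorem \ref{RSW} applies to each sub-rectangle with boundary conditions one does not control, which is precisely why the boundary-condition-uniform form of Theorem \ref{RSW} is indispensable and why a crossing-probability estimate valid only "in the bulk" would not suffice here. Everything else is the routine combinatorial geometry of arranging four rectangles into a loop plus an application of FKG.
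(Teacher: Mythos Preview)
Your core idea --- four overlapping rectangles whose long-way crossings glue into a circuit, combined via FKG --- is exactly the paper's approach. However, you have wrapped it in an unnecessary reduction to a sub-annulus of fixed aspect ratio, and that reduction is where the exposition becomes shaky. Your first attempt (``it is enough to prove $\P_{S_{m,m'}}^{1}(\mathcal{C})\ge c$'') goes the wrong way: boundary conditions induced by the outside configuration are \emph{dominated by} wired, so a lower bound under wired b.c.\ does not transfer down. You correctly abandon this for the ``alternative'' of proving a bound uniform in $\xi$ on $S_{m,2m}$, which works, but then you have to deal separately with $\beta>1/2$, and the final patching paragraph is muddled.

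The paper avoids all of this by choosing the four rectangles to fill the annulus $S_{m,n}$ itself: $R_T=\llbracket -n,n\rrbracket\times\llbracket m,n\rrbracket$, $R_B=\llbracket -n,n\rrbracket\times\llbracket -n,-m\rrbracket$, and the two analogous vertical strips. These have aspect ratio $2n/(n-m)\le 2/(1-\beta)$, bounded once $\beta<1$ is fixed, so Theorem~\ref{RSW} applies directly for every $m\le\beta n$ with no case distinction and no sub-annulus. Moreover, since the ambient measure is $\P^0_{S_{m,n}}$, monotonicity gives $\P^0_{S_{m,n}}(\mathcal{C}_h(R_B))\ge \P^0_{R_B}(\mathcal{C}_h(R_B))\ge c_-$, so one only needs the free-boundary case of Theorem~\ref{RSW}; your claim that the boundary-condition uniformity is ``indispensable'' here is overstated for this particular corollary (though it is genuinely needed elsewhere in the paper, e.g.\ Proposition~\ref{zero-magnetization}). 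FKG in $\P^0_{S_{m,n}}$ then gives $c_\beta=c_-^4$ in one line.
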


\begin{proof}
This follows from Theorem \ref{RSW} applied in the four rectangles $R_B= \llbracket -n,n \rrbracket \times \llbracket -n,-m \rrbracket$, $R_L= \llbracket -n,-m \rrbracket \times \llbracket -n,n \rrbracket$, $R_T= \llbracket -n,n \rrbracket \times \llbracket m,n \rrbracket$ and $R_R= \llbracket m,n \rrbracket \times \llbracket -n,n \rrbracket$. Indeed, if there exists a crossing in each of these rectangles in the ``hard'' direction, one can construct from them a circuit in $S_{m,n}$.

Now, consider any of these rectangles, $R_B$ for instance. Its aspect ratio is bounded by $2/(1-\beta)$, so that Theorem \ref{RSW} implies that there is a horizontal crossing with probability at least
$$\mathbb{P}_{R_B}^0(\mathcal{C}_H(R_B)) \geq c > 0.$$
Combined with the FKG inequality, this allows us to conclude: the desired probability is at least $c_{\beta} = c^4 > 0$.
\end{proof}

\begin{proposition}[power-law decay for the magnetization]\label{zero-magnetization}
For $p= p_{\mathrm{sd}}$, there exists a unique infinite-volume measure $\P$. For this measure, there is almost surely no infinite open cluster. Moreover, there exist constants $\alpha,c>0$ such that for all $n \geq 0$,
\begin{equation}\label{up}
\P(0 \leadsto \partial S_n)\leq \frac{c}{n^{\alpha}}.
\end{equation}
\end{proposition}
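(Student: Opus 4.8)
The plan is to establish the three assertions in turn: uniqueness of the infinite-volume measure, absence of an infinite cluster, and the power-law upper bound on one-arm connection probabilities.

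First I would argue that there is no infinite cluster in the wired infinite-volume measure $\P^1$ (the largest one). By self-duality, the dual of $\P^1_{S_n}$ on $S_n$ is $\P^0_{S_n^*}$, so a dual circuit in an annulus $S_{m,n}$ (for the dual lattice) corresponds to a primal event; Corollary \ref{circuits} applied to the dual model gives that with probability at least $c_\beta>0$ there is a dual circuit surrounding $S_m$ inside $S_{m,n}$, uniformly in boundary conditions. Taking a sequence of disjoint dyadic annuli $S_{2^k,2^{k+1}}$, the events ``there is a dual circuit in the $k$-th annulus'' each have probability bounded below, and by the Domain Markov property one can reveal them from outside in, so that conditionally on the exterior each still has probability $\geq c_\beta$; hence infinitely many occur almost surely. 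Each such dual circuit surrounds the origin and prevents $0$ from being connected to infinity, so $\P^1(0\leadsto\infty)=0$, and by stochastic domination and translation invariance, no infinite-volume measure percolates. Then uniqueness of the infinite-volume measure follows from the standard fact (see \cite{G_book_FK}) that $\P^0=\P^1$ if and only if there is no infinite cluster in $\P^1$ — or, more directly, from the circuit argument: a dual circuit around any finite box decouples it from the boundary, forcing all limiting measures to agree on local events.

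For the quantitative bound \eqref{up}, I would use the same nested annuli. For the event $\{0\leadsto\partial S_n\}$ to occur, no dual circuit can surround the origin in any annulus $S_{2^k,2^{k+1}}$ with $2^{k+1}\leq n$; there are of order $\log_2 n$ such disjoint annuli. By Corollary \ref{circuits} (for the dual model) each annulus contains a surrounding dual circuit with probability at least $c_\beta$, and by the Domain Markov property these events, revealed successively from the innermost outward (or outermost inward), are stochastically dominated below by independent ones. Therefore
\begin{equation}
\P(0\leadsto\partial S_n)\leq (1-c_\beta)^{\lfloor \log_2 n\rfloor - O(1)} = \frac{c}{n^{\alpha}},
\end{equation}
with $\alpha = -\log_2(1-c_\beta)>0$. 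One small point: Corollary \ref{circuits} is stated with free boundary conditions, which is the relevant (smallest) case when the complementary event is increasing for the dual; for the primal one-arm event one wants the circuit event under the largest boundary conditions, but the dual of primal-wired is dual-free, so the bound transfers correctly after passing to the dual lattice.

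The main obstacle, and the step requiring the most care, is the decorrelation of the successive annulus events via the Domain Markov property: one must set up the conditioning so that, given everything discovered outside (resp. inside) the current annulus, the conditional law inside that annulus is an FK measure with some boundary conditions, and then invoke Corollary \ref{circuits}'s uniformity in boundary conditions — this is precisely why the RSW bound was proved uniformly in $\xi$. The rest is bookkeeping with dyadic scales and the self-duality relation $\P^1_{R}(\C^*_h(R)) = \P^0_{R^*}(\C_h(R^*))$ already used in the proof of Theorem \ref{RSW}. I would also remark that \eqref{up} immediately rules out an infinite cluster, so the three claims are not independent; I would present the power-law estimate first and deduce the absence of percolation and uniqueness as consequences.
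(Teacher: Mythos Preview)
Your proposal is correct and follows essentially the same route as the paper: dyadic annuli $S_{2^k,2^{k+1}}$, dual circuits via Corollary~\ref{circuits} (transferred by self-duality from primal-free to primal-wired), successive conditionings through the Domain Markov property, and the resulting product bound $(1-c)^{\lfloor\log_2 n\rfloor}$. The paper's version is terser --- it observes right away that the power-law estimate implies absence of percolation, which in turn implies uniqueness, and then proves only Eq.~\eqref{up} --- but this is exactly the reorganization you suggest in your final sentence.
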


This result implies in particular that $\P(0 \leadsto \infty) = 0$, in other words that there is no magnetization at $p= p_{\mathrm{sd}}$. This result also applies to the Ising model: the magnetization at the origin decays at least as a power law.

\begin{remark}
We would like to mention that an alternative proof of the fact that there is no spontaneous magnetization at criticality can be found in \cite{W_percolation,Gr2}. Also, we actually know from Onsager's work \cite{KaufmanOnsager} that the connection probability follows a power law as $n \to \infty$, described by the one-arm plane exponent $\alpha_1 = 1/8$. It should be possible to prove the existence and the value of this exponent using conformal invariance, as well as the arm exponents for a larger number of arms. More precisely, one would need to consider the probability of crossing an annulus a certain (fixed) number of times in the scaling limit, and analyze the asymptotic behavior of this probability as the modulus tends to $\infty$. Theorem \ref{RSW} then implies the so-called quasi-multiplicativity property, which allows one to deduce, using concentric annuli, the existence and the value of the arm exponents for the discrete model.
\end{remark}

\begin{proof}
We first note that it is classical that the non-existence of infinite clusters implies uniqueness of the infinite-volume measure: it is thus sufficient to prove Equation Eq.(\ref{up}). We consider the annuli $A_n=S_{2^n,2^{n+1}}$ for $n \geq 1$, and $\mathcal{C}^*(A_n)$ the event that there is a dual circuit in $A_n^*$. We know from Corollary \ref{circuits} that there exists a constant $c > 0$ such that
\begin{equation}
\P_{A_n}^1(\mathcal{C}^*(A_n)) \geq c
\end{equation}
for all $n \geq 1$. By successive conditionings, we then obtain
\begin{equation}
\P(0 \leadsto \partial S_{2^{N}}) \leq \prod_{n=0}^{N-1}\P_{A_n}^1((\mathcal{C}^*(A_n))^c) \leq (1-c)^N,
\end{equation}
and the desired result follows.
\end{proof}

\begin{remark}
Note that together with sharp threshold arguments developed by Graham and Grimmett \cite{Grimmett2}, these crossing estimates also provide a geometric proof that the critical point is $p_c= p_{\mathrm{sd}}(2)=\sqrt{2}/(1+\sqrt{2})$ (which then gives the critical temperature of the Ising model, thanks to the Edwards-Sokal coupling).
\end{remark}

\subsection{Regularity of interfaces and tightness}

Theorem \ref{RSW} can be used to apply the technology developed by Aizenman and Burchard \cite{AB}, to prove regularity of the collection of interfaces, which implies tightness using a variant of the Arzel\`a-Ascoli theorem. 

This compactness property for the set of interfaces is important to construct the scaling limits of discrete interfaces, once we have a way to identify their limit uniquely (using for instance the so-called martingale technique, detailed in \cite{Sm2}). Here, the fermionic observable provides a conformally invariant martingale, and its convergence to a holomorphic function has been proved in \cite{Sm3}, leading to the following important theorem:
\begin{theorem}[Smirnov \cite{Sm4}] \label{convergence to SLE(16/3)}
For any Dobrushin domain $(\D,a,b)$, with discrete approximations $(\D_\e,a_{\e},b_{\e})$, the $\P_{\D_\e,a_{\e},b_{\e}}$-law of the exploration path $\g_{\e}$ from $a_{\e}$ to $b_{\e}$ converges weakly to the law of a chordal \emph{SLE(16/3)} path in $\D$, from $a$ to $b$. 
\end{theorem}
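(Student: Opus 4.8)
The plan is to follow the standard two-step route to a statement of convergence to SLE: first show that the laws of the discrete exploration paths $\g_\e$ form a precompact (tight) family in a suitable topology on curves, and then show that \emph{every} subsequential limit is chordal SLE(16/3) by exhibiting a martingale observable along the limiting curve. Since the candidate limit SLE(16/3) does not depend on the chosen subsequence, precompactness together with uniqueness of the subsequential limit upgrades automatically to weak convergence of the whole family.

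For tightness, one works on the space of continuous curves up to increasing reparametrization, equipped with the uniform metric, and applies the Aizenman--Burchard criterion \cite{AB}: it is enough to bound, uniformly in $\e$ and in the location of the annulus, the probability that $\g_\e$ crosses a fixed topological annulus $k$ separate times by a quantity tending to $0$ fast enough as $k \to \infty$. Such a bound is exactly what Theorem \ref{RSW} provides, after iteration. Indeed, a primal (resp. dual) circuit in a sub-annulus of bounded modulus forces the interface to forgo one additional traversal, and Theorem \ref{RSW} produces such a circuit with probability bounded below \emph{uniformly in the boundary conditions}. This uniformity is essential: the $k$ traversals are controlled by conditioning successively on initial segments of $\g_\e$, and by the Domain Markov property each such conditioning amounts to imposing some (arbitrary) boundary conditions on the remaining domain, under which the RSW estimate still applies. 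Iterating over a number of order $k$ of disjoint concentric sub-annuli yields a bound of the form $\lambda^k$ with $\lambda < 1$, which meets the Aizenman--Burchard hypotheses; hence $(\g_\e)$ is tight and all subsequential limits are supported on continuous curves.

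To identify the limit, fix a subsequential limit $\g$ in $(\D,a,b)$, map $(\D,a,b)$ conformally to $(\H,0,\infty)$, parametrize $\g$ by half-plane capacity, and let $(W_t)$ be its Loewner driving function; the goal is to prove $W_t = \sqrt{16/3}\, B_t$ for a standard Brownian motion $B$. The fermionic observable supplies the martingale needed. On the discrete side, its defining formula Eq.(\ref{observable_F}), as an expectation over the exploration path, makes the value of $F_\e$ on any fixed medial edge a martingale with respect to the filtration generated by the growing path, while the Domain Markov property identifies the conditional law after $T$ steps as the FK Ising model in the slit Dobrushin domain $\D_\e \setminus \g_\e[0,T]$. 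Smirnov's convergence theorem \cite{Sm3} --- whose discrete inputs, namely the Cauchy--Riemann relation of Lemma \ref{Cauchy-Riemann}, the sub/superharmonicity of Proposition \ref{subsupharm}, and the boundary control via harmonic measures of Section \ref{comp-harm-mes}, we take as given --- shows that the suitably normalized $F_\e$ in $\D_\e \setminus \g_\e[0,t]$ converges to the conformally covariant holomorphic function $\psi_t$ singled out by an explicit Riemann--Hilbert boundary value problem in $\D \setminus \g[0,t]$ (concretely $\psi_t = \sqrt{\phi_t'}$ up to the standard change of coordinates, the exponent $1/2$ reflecting the spin $\sigma = 1/2$). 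Passing to the limit in the discrete martingale property, the relevant coefficients of $\psi_t$ in a local expansion at the tip of $\g[0,t]$ are continuous local martingales in $t$; inserting the Loewner evolution of $\phi_t$ and applying It\^o's formula forces the drift term to vanish and the quadratic-variation term to obey $d\langle W \rangle_t = \tfrac{16}{3}\, dt$. L\'evy's characterization then yields $W_t = \sqrt{16/3}\, B_t$, i.e. $\g$ is chordal SLE(16/3).

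Combining the two steps, every subsequential limit of $\g_\e$ is chordal SLE(16/3) in $(\D,a,b)$, hence $\g_\e$ converges weakly to it, as claimed. The main obstacle lies entirely in the identification step, specifically the convergence of the discrete observable to $\psi_t$ \emph{uniformly over the random slit domains} $\D_\e \setminus \g_\e[0,t]$: one needs the harmonic-measure comparison estimates of Section \ref{comp-harm-mes} with constants uniform over all domains produced by the exploration, precompactness of $\{F_\e\}$ via the equicontinuity obtained from sandwiching $H$ between its sub- and super-harmonic modifications, and the identification of any limit through its boundary data, a continuous Riemann--Hilbert problem. This is the analytic heart of \cite{Sm3, Sm4} and we do not reproduce it; the contribution of the present paper to this circle of ideas is the tightness input, supplied by Theorem \ref{RSW}.
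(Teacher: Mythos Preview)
Your proposal is correct and matches the paper's treatment: the theorem is attributed to Smirnov \cite{Sm4}, and the paper does not give a self-contained proof but only explains how its Theorem \ref{RSW} supplies the tightness input via the Aizenman--Burchard criterion (Theorem \ref{Aizenman-Burchard}), exactly as you describe in your first step. Your sketch of the identification step via the martingale observable is more detailed than anything the paper provides, but you correctly defer the analytic core to \cite{Sm3, Sm4} and correctly identify the present paper's contribution as the RSW-based precompactness.
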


We briefly explain how one can use the crossing bounds to obtain the compactness of the interfaces. Note that this result has also been proved, in a different way, in \cite{K} and in the forthcoming article \cite{KS1}.

As usual, curves are defined as continuous functions from $[0,1]$ into a bounded domain $\mathcal{D}$ -- more precisely, as equivalence classes up to strictly increasing reparametrization. The \emph{curve distance} is then just the distance induced by the norm of uniform convergence.

Let $A(x;r,R)$ be the annulus $\B(x,R)\setminus \B(x,r)$. We denote by $\A_k(x;r,R)$ the event that there are $2k$ crossings of the curve from $\B(x,r)$ to $\B(x,R)^c$. 

\begin{theorem}[Aizenman-Burchard \cite{AB}]\label{Aizenman-Burchard}
Let $\D$ be a compact domain and denote by $\P_{\e}$ the law of a random curve $\tilde{\g}_{\e}$ with short-distance cut-off $\e>0$. If for any $k>0$, there exists $C_k<\infty$ and $\lambda_k>0$ such that for all $\e<r<R$ and $x\in \D$,
\begin{equation}
\P_{\e}(\A_k(x;r,R))\leq C_k \Big(\frac{r}{R}\Big)^{\lambda_k},
\end{equation}
and $\lambda_k\rightarrow \infty$, then the curves $(\tilde{\g}_{\e})$ are precompact for the weak convergence associated with the curve distance.
\end{theorem}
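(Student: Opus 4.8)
The plan is to run the Aizenman--Burchard argument: convert the multi-arm crossing bounds into a modulus-of-continuity estimate for the curves $\tilde{\g}_{\e}$ that is uniform in the cut-off $\e$, and then conclude by a Prokhorov/Arzel\`a--Ascoli criterion on the space of curves equipped with the curve distance. Recall that, since $\D$ is bounded, the set of curves admitting a parametrization with a fixed modulus of continuity $\psi$ (any genuine modulus, $\psi(0^+)=0$) is relatively compact for the curve distance; so it suffices to produce, for every $\eta>0$, such a $\psi=\psi_{\eta}$ for which, with $\P_{\e}$-probability at least $1-\eta$ and uniformly in $\e$, the curve $\tilde{\g}_{\e}$ admits a parametrization of modulus $\psi_{\eta}$.

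The first, deterministic, step --- which is the technical heart of \cite{AB} --- is a purely geometric statement: if a curve $\g\subset\D$ is such that for every $x$ and every pair of scales $\e\le r<R\le 1$ the trace of $\g$ makes at most $N(R/r)$ disjoint crossings of the annulus $A(x;r,R)$, with $N(\cdot)$ a fixed, slowly growing function, then $\g$ admits a reparametrization $\hat{\g}:[0,1]\to\D$ satisfying $|\hat{\g}(s)-\hat{\g}(t)|\le\psi(|s-t|)$ for a modulus $\psi$ that depends only on $N(\cdot)$ and the dimension. The construction is recursive over dyadic scales: the crossing bound limits how many boxes of side $2^{-(m+1)}$ the trace can visit inside a box of side $2^{-m}$, one allots comparable amounts of ``time'' to the visited sub-boxes, and one must check that this genuinely controls the time parametrization (not only the trace as a set) --- this last point is where the continuity of $\g$ and a simultaneous-over-all-scales accounting of excursions are used. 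I would import this step essentially verbatim from \cite{AB}.

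The second step is probabilistic: one shows that, uniformly in $\e$, the curve $\tilde{\g}_{\e}$ has the ``tame crossings'' property above with probability at least $1-\eta$. This is a union bound over a net of points and a range of dyadic scales: for a fixed ratio of scales one controls $\A_k(x;r,R)$ for every $x$ in an $r$-net of $\D$ of cardinality $O(r^{-2})$, using the hypothesis $\P_{\e}(\A_k(x;r,R))\le C_k(r/R)^{\lambda_k}$. The key is that, because $\lambda_k\to\infty$, one may choose $k$ (depending on $\eta$, and on the chosen scale ratio, e.g. $R=\sqrt{r}$) large enough that $C_k(r/R)^{\lambda_k}$ beats the entropy factor $r^{-2}$, so that the resulting sum over scales down to $\e$ is finite and small, uniformly in $\e$. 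Chaining these bounds across consecutive dyadic scales yields the hypothesis of the deterministic step with a function $N(\cdot)$ growing only sub-polynomially, hence a bona fide modulus $\psi_{\eta}$; feeding this into Arzel\`a--Ascoli gives tightness, and a Borel--Cantelli refinement of the same union bounds shows in addition that subsequential limits are a.s. H\"older continuous in a suitable parametrization. The main obstacle is the deterministic step: passing from a combinatorial bound on annulus crossings to an honest parametrization with a uniform modulus requires controlling the time the curve spends in small regions, and doing so at all scales simultaneously is the delicate part; the probabilistic step, by contrast, is a routine union bound once the role of $\lambda_k\to\infty$ is recognized.
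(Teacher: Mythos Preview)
The paper does not prove this theorem at all: it is quoted as a black box from \cite{AB}, and the surrounding text only explains how to verify the hypothesis $\P_{\e}(\A_k(x;r,R))\le C_k(r/R)^{\lambda_k}$ for the FK Ising exploration paths using Corollary~\ref{circuits}. There is therefore nothing in the paper to compare your argument against.

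That said, your sketch is a faithful outline of the original Aizenman--Burchard proof: the two-step structure (a deterministic ``tame crossings imply a uniform modulus of continuity'' lemma, followed by a union bound over a net of centers and dyadic scales exploiting $\lambda_k\to\infty$ to beat the entropy) is exactly how \cite{AB} proceeds, and your identification of the deterministic reparametrization step as the delicate part is accurate. If you intend this as a self-contained proof rather than a citation, you would need to actually carry out that deterministic step; as written it is a correct roadmap but not a proof.
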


This theorem can be applied to the family $(\gamma_{\epsilon})$ of exploration paths defined in Theorem \ref{convergence to SLE(16/3)}, using the following argument. If $\A_k(x;r,R)$ holds, then there are $k$ open paths, alternating with $k$ dual paths, connecting the inner boundary of the annulus to its outer boundary. Moreover, one can decompose the annulus $A(x;r,R)$ into roughly $\log_2(R/r)$ annuli of the form $A(x;r,2r)$, so that it is actually sufficient to prove that
\begin{equation}
\P(\A_k(x;r,2r))\leq c^k
\end{equation}
for some constant $c<1$.  Since the paths are alternating, one can deduce that there are $k$ open paths, each one being surrounded by two dual paths. Hence, using successive conditionings and the Domain Markov property, the probability for each of them is smaller than the probability that there is a crossing in the annulus, which is less than some constant $c<1$ by Corollary \ref{circuits} (note that this reasoning also holds on the boundary).

Hence, Theorem \ref{Aizenman-Burchard} implies that the family $(\g_{\e})$ is precompact for the weak convergence.

\subsection{Half-plane one-arm exponent for the FK Ising model and boundary magnetization for the Ising model} \label{one_arm_exponent}

As a by-product of our proofs, in particular of the estimates of Section \ref{comp-harm-mes}, one can also obtain the value of the
critical exponent for the boundary magnetization in the Ising model, near a free boundary arc (assuming it is smooth), and the corresponding one-arm half-plane exponent for the FK Ising model.

Let us first consider the one-point magnetization $\E_{\D,a,b} [\sigma_x]$ 
for the Ising model at criticality in a discrete domain $(\D, a, b)$ with free boundary conditions on the
counterclockwise arc $(ab)$ and $(+)$ boundary conditions on the other arc $(ba)$.

\begin{proposition}
There exist positive constants $c_1$ and $c_2$ such that for any discrete 
domain $(\D, a, b)$ with $a = (- n ,0)$ and $b = (n ,0)$ ($n \geq 0$), containing
the rectangle $R_n = \llbracket -n,n \rrbracket \times \llbracket 0,n \rrbracket$ and such that its boundary contains the lower arc $\llbracket -n, n \rrbracket \times \{0\}$, 
we have
\begin{equation}
c_1 n^{-1/2} \leq \E_{\D, a, b} [ \sigma_0 ] \leq c_2 n^{-1/2},
\end{equation}
uniformly in $n$.
\end{proposition}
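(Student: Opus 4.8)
The plan is to translate the magnetization $\E_{\D,a,b}[\sigma_0]$ into an FK connection probability and then apply exactly the machinery already developed. By the Edwards-Sokal coupling, with $(+)$ boundary conditions on the wired arc $(ba)$ and free boundary conditions on $(ab)$, one has the classical identity $\E_{\D,a,b}[\sigma_0] = \P_{\D,a,b}(0 \leadsto \text{wired arc})$: the spin at the origin is $+1$ precisely when $0$ lies in the cluster of the wired (hence $(+)$) arc, and otherwise it is an unbiased $\pm1$. Since $0$ is on the free arc here, Lemma \ref{interpretation H} applies directly and gives
\begin{equation}
\sqrt{\Hw(W_0)} \leq \E_{\D,a,b}[\sigma_0] \leq \sqrt{\Hb(B_0)}.
\end{equation}
So it suffices to bound the two harmonic measures above and below by a constant times $1/n$.

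For the upper bound, I would use item (\ref{upperbounddistance}) of Lemma \ref{upper_bound}: since the domain contains $R_n = \llbracket -n,n\rrbracket \times \llbracket 0,n\rrbracket$ and its boundary contains the segment $\llbracket -n,n\rrbracket \times \{0\}$ with $a=(-n,0)$, $b=(n,0)$, the wired arc must lie outside $R_n$, so $d_1(0) \geq n$ (the distance from the origin to the wired arc is at least $n$), giving $\Hb(B_0) \leq c_3/n$ and hence $\E_{\D,a,b}[\sigma_0] \leq \sqrt{c_3}\, n^{-1/2}$. For the lower bound, I would use Lemma \ref{lower_bound}: I need a vertical segment $l_k(-k) = \{-k\}\times\llbracket 0,k\rrbracket$ with $k$ of order $n$ that disconnects the part of the free arc lying in the upper half-plane from the origin, inside $\D$. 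The point is that the free arc $(ab)$ goes from $(-n,0)$ to $(n,0)$; the portion of it in the upper half-plane is cut off from the origin by the segment at abscissa $-n$ (more precisely one takes $k$ a fixed fraction of $n$, say $k = n$, noting $R_n \subset \D$ and arguing that any path in $\D$ from the origin to the upper part of the free arc must cross the vertical line $x_1 = -n$ at height between $0$ and $n$, or else exit through the lower arc which is not free there). This yields $\Hw(W_0) \geq c_2/n$, hence $\E_{\D,a,b}[\sigma_0] \geq \sqrt{c_2}\, n^{-1/2}$.

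The main obstacle is a geometric bookkeeping one rather than an analytic one: one must check carefully that in an \emph{arbitrary} domain $(\D,a,b)$ satisfying the stated hypotheses — not just in $R_n$ itself — there is genuinely a disconnecting segment of length $\gtrsim n$ separating the origin from the upper-half-plane part of the free arc, and that the origin is at distance $\gtrsim n$ from the wired arc. The hypotheses ($a=(-n,0)$, $b=(n,0)$, $\D \supset R_n$, and $\partial\D \supset \llbracket -n,n\rrbracket\times\{0\}$) are tailored precisely so that the free arc enters the lower line at $\pm(n,0)$ and the wired arc stays away from $R_n$; making this rigorous is essentially an application of planarity (Jordan curve arguments on $\partial_e\D$), and once it is in place the harmonic-measure estimates of Section \ref{comp-harm-mes} finish the job immediately. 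A minor additional point is that one should handle the case where the free arc dips below or the wired arc touches the lower line outside $\llbracket -n,n\rrbracket$, but since $\partial\D$ contains this whole segment, the arcs can only meet the lower line at $\pm(n,0)$, so this does not arise.
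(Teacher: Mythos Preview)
Your proposal is correct and follows essentially the same route as the paper's own proof. The paper converts $\E_{\D,a,b}[\sigma_0]$ to the FK connection probability $\P_{\D,a,b}(0\leadsto\text{wired arc})$ via Edwards--Sokal, identifies this with $|F(e)|$ since $0$ lies on the free arc, invokes Proposition~\ref{uniform comparability}, and then appeals to ``estimates similar to the estimates in Lemmas~\ref{lower_bound} and~\ref{upper_bound}''; your plan does exactly this (packaging the middle two steps as Lemma~\ref{interpretation H}), and your identification of the geometric bookkeeping as the only remaining work matches the level of detail the paper itself provides.

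One small remark on your lower-bound step: as you implicitly notice, the hypotheses force the free arc $(ab)$ to be exactly the segment $\llbracket -n,n\rrbracket\times\{0\}$, so its intersection with the \emph{strict} upper half-plane is empty and Lemma~\ref{lower_bound} applies with $k=n$ almost vacuously (the only care needed is that $\D$ might extend below the $x$-axis outside $\llbracket -n,n\rrbracket$, so one compares first to an $\H$-domain by a monotonicity step). This is precisely the kind of ``similar estimate'' the paper gestures at without writing out.
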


\begin{proof}
The magnetization at the origin can be expressed, in the corresponding FK representation, as the probability that the origin is connected to the wired counterclockwise arc $(ba)$. This probability can now be estimated exactly as in Section \ref{section_proof}: it is equal to the probability that the FK interface passes through $0$ (since the origin is on the free boundary of $\D$), which is itself the modulus of the fermionic observable on $(\D, a, b)$ evaluated at $0$. Now, it suffices to use Proposition \ref{uniform comparability} to compare this to the two harmonic measures, and then estimates similar to the estimates in Lemmas \ref{lower_bound} and \ref{upper_bound}.
\end{proof}

This result can be equivalently stated for the one-arm half-plane probability for FK percolation:

\begin{proposition}
Consider the rectangle $R_n = \llbracket -n,n \rrbracket \times \llbracket 0,n \rrbracket$. There exist
positive constants $c_1$ and $c_2$ such that for any boundary conditions $\xi$ such that
the lower arc $\partial^- B_n$ is free, one has
\begin{equation}
	c_1 n^{-1/2} \leq \P_{R_{n}}^{\xi} ( 0 \leadsto \partial^+ R_n)
	\leq c_2 n^{-1/2},
\end{equation}
uniformly over all $n$.
\end{proposition}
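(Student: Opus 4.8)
The plan is to read off both inequalities from the analysis of the fermionic observable carried out in Sections~\ref{comp-harm-mes} and~\ref{section_proof}, using the monotonicity (\ref{comparison between boundary conditions})--(\ref{extremality}) in the boundary conditions to reduce each bound to a computation in a Dobrushin domain. Since $\{0\leadsto\partial^+R_n\}$ is an increasing event and the lower side of $R_n$ is free, any admissible $\xi$ is dominated by the Dobrushin domain $(R_n,a,b)$ whose two marked points are the bottom corners of $R_n$, so that the free arc is exactly the lower side and the wired arc is the union of the two vertical sides with $\partial^+R_n$. For the \emph{upper bound} one then has $\P_{R_n}^{\xi}(0\leadsto\partial^+R_n)\le\P_{R_n,a,b}(0\leadsto\text{wired arc})=|F(e_0)|\le\sqrt{\Hb(B_0)}$ by Lemma~\ref{interpretation H} and Proposition~\ref{uniform comparability}; as the origin sits on the bottom side at graph distance exactly $n$ from the wired arc, item~(\ref{upperbounddistance}) of Lemma~\ref{upper_bound} gives $\Hb(B_0)\le c_3/n$, hence the claimed bound with $c_2=\sqrt{c_3}$.

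For the \emph{lower bound}, monotonicity reduces the problem to free boundary conditions, where there is no wired arc and the observable is not directly available; instead I would run a second-moment argument on $M:=\#\{u\in\partial^+R_n:\ 0\leadsto u\}$, in the spirit of Step~1 of the proof of Theorem~\ref{RSW}. For the first moment, $\P_{R_n}^{0}(0\leadsto u)$ is the probability that $0$ is connected to the single-vertex wired arc in the (degenerate) Dobrushin domain $(R_n,u,u)$, which by Lemma~\ref{interpretation H}, Proposition~\ref{uniform comparability} and Lemma~\ref{2 points estimate} is at least $c/n$ for the $\gtrsim n$ vertices $u=(u_1,n)$ with $|u_1|\le n/2$; summing gives $\E_{R_n}^{0}[M]\ge c'>0$. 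For the second moment one writes $\E_{R_n}^{0}[M^2]=\sum_{u,v}\P_{R_n}^{0}(0\leadsto u,\,0\leadsto v)$, and the key input is the two-point estimate
\begin{equation}\label{twopt_target}
\P_{R_n}^{0}(0\leadsto u,\,0\leadsto v)\ \le\ \frac{c}{n\sqrt{|u-v|}},
\end{equation}
which after summation over $u,v\in\partial^+R_n$ yields $\E_{R_n}^{0}[M^2]\le c''\sqrt n$. Cauchy--Schwarz then gives
$$\P_{R_n}^{0}(0\leadsto\partial^+R_n)=\P_{R_n}^{0}(M>0)\ \ge\ \frac{\E_{R_n}^{0}[M]^2}{\E_{R_n}^{0}[M^2]}\ \ge\ \frac{c_1}{\sqrt n},$$
and by monotonicity the same lower bound holds for every admissible $\xi$.

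Estimate (\ref{twopt_target}) is the crux and I expect it to be the main obstacle. The mechanism should be that of Proposition~\ref{probability that two points belong to gamma}: since $0\leadsto u$ and $0\leadsto v$ exactly when $0$ and $v$ are both connected to $u$, one works in $(R_n,u,u)$, runs the exploration path $\gamma$, and conditions successively at the hitting time of the medial edge adjacent to $v$ and at the hitting times of the dyadic balls around the ``far'' point $0$. At each stage the Domain Markov property together with items~(\ref{upperbounddistance}) and~(\ref{upperboundarc}) of Lemma~\ref{upper_bound} and the strong one-arm bound of Lemma~\ref{upper bound for Tk} control the relevant conditional connection probabilities, the geometric sum over dyadic scales producing the factor $n^{-1}$ (from the distance $\asymp n$ between $0$ and $u$) and the factor $|u-v|^{-1/2}$ (from the vertical segments of length $\asymp|u-v|$ disconnecting $v$ from $u$). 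The new feature to handle carefully is that the two marked points sit at two very different scales relative to the wired vertex $u$, so the book-keeping of which point the interface reaches first, and of the dyadic decomposition, must be adapted accordingly; but no idea beyond Sections~\ref{comp-harm-mes}--\ref{section_proof} is needed, and the cases $|u-v|\gtrsim n$ are trivial.
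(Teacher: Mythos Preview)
Your upper bound is fine and matches the paper's argument. The lower bound, however, takes a genuinely different route from the paper's and, as you have written it, contains a real gap.

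The paper does \emph{not} run a second-moment argument here. It simply invokes Theorem~\ref{RSW} (already proved) together with FKG to force, at the cost of a multiplicative constant independent of $\xi$, an open crossing $\Gamma$ in the half-annulus $R_n\setminus R_{n/2}$ that disconnects $0$ from $\partial R_n\setminus\partial^-R_n$. Conditionally on the innermost such $\Gamma$, the region enclosed is a Dobrushin domain with wired arc $\Gamma$ and free arc on the bottom, so the \emph{previous} proposition (the magnetization estimate) gives $\P(0\leadsto\Gamma)\ge c\,n^{-1/2}$ directly. One then glues $\Gamma$ to $\partial^+R_n$ by another RSW crossing. This is a two-line reduction; nothing like \eqref{twopt_target} is needed.

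Your second-moment scheme, by contrast, hinges entirely on \eqref{twopt_target}, and that bound does \emph{not} fall out of a straight adaptation of Proposition~\ref{probability that two points belong to gamma}. In that proposition the two target points $x,y$ lie on the free arc opposite a \emph{macroscopic} wired arc, and the three conditioning steps combine to give $c/\sqrt{|x-y|\,n}$. In your setting the wired arc of $(R_n,u,u)$ is a single vertex, the two target points $0$ and $v$ sit on \emph{different} sides of the rectangle, and running the same dyadic decomposition with Lemmas~\ref{upper_bound} and~\ref{upper bound for Tk} as stated yields at best
\[
\P^0_{R_n}(0\leadsto u,\,0\leadsto v)\ \le\ \frac{c}{\sqrt{n\,|u-v|}},
\]
not $c/(n\sqrt{|u-v|})$. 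Plugging this into your second moment gives $\E[M^2]\le c\,n$ and hence only $\P(M>0)\ge c/n$, short of the target by $\sqrt n$. The missing factor encodes precisely that the wired arc is a point rather than a side: one needs $\Hb(B_z)\le c/n^2$ (equivalently $\P^0(z\leadsto u)\le c/n$) for $z$ at distance $\asymp n$ from $u$, which is a standard random-walk fact but is \emph{not} among the harmonic-measure lemmas of Section~\ref{comp-harm-mes} (Lemma~\ref{upper_bound}(i) gives only $c/n$ for $\Hb$, hence $c/\sqrt n$ for the probability). Your remark that ``the cases $|u-v|\gtrsim n$ are trivial'' already fails without this sharper input. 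Moreover, the disconnection step via Lemma~\ref{upper_bound}(\ref{upperboundarc}) does not transfer cleanly: with $0$ on the bottom and $v$ on the top, the short vertical segments you invoke near one target point do not separate the other from the wired arc in the slit domain in the way they do when both points lie on the same side.

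So the approach is salvageable in principle, but it requires genuinely new harmonic-measure estimates and a more delicate case analysis of the exploration; it is not, as you write, ``no idea beyond Sections~\ref{comp-harm-mes}--\ref{section_proof}''. The paper's route via RSW is both shorter and avoids the issue entirely.
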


\begin{proof}
We get the upper bound using monotonicity and the previous proposition, since $(+)$ boundary conditions in
the Ising model correspond to wired boundary conditions in the corresponding FK representation. 
For the lower bound, 
by Theorem \ref{RSW} and the FKG inequality, we
can enforce the existence of a crossing in the half-annulus $R_n\setminus R_{n/2}$ 
that disconnects 0 from $\partial R_n\setminus \partial^-R_n$ to the price of a constant independent of $\xi$.
Using monotonicity and FKG, the probability that $0$ is connected by an open path to this crossing 
(conditionally on its existence) is larger than the probability that $0$ is connected to the boundary with wired
boundary conditions, without conditioning. Hence, the lower bound of the previous proposition gives the desired result.
\end{proof}

\begin{remark}
Note that contrary to the power laws established using \emph{SLE}, there are no potential logarithmic corrections here -- as is the case with the ``universal'' arm exponents for percolation (corresponding to 2 and 3 arms in the half-plane, and 5 arms in the plane). Furthermore, one can follow the same standard reasoning as for percolation, based on the RSW lower bound, to prove that the two- and three-arm half-plane exponents, with alternating ``types'' (primal or dual), have values $1$ and $2$ respectively.
\end{remark}

\subsection{$n$-point functions for the FK Ising and the Ising models}

Since the work of Onsager \cite{Onsager}, it is well-known that for the Ising model at criticality, the magnetization at the middle of a square of side length $2m$ with $(+)$ boundary conditions decays like $m^{-1/8}$. 
It is then tempting to say that the correlation of two spins at distance $m$ in the plane
(in the infinite-volume limit, say) decays like $m^{-1/4}$, and this is indeed what happens. To the knowledge of
the authors, there is no straightforward generalization of Onsager's work that allows to derive this without difficult computations. However, this result can be made rigorous very easily with the help of Theorem \ref{RSW}. We give here only a result for two-point correlation functions, but exponents for $n$-spin correlations, for instance, can be obtained using exactly the same method.

Let us first interpret Onsager's result in terms of the FK representation. 
\begin{lemma}
Let $B_m$ be the square $\llbracket -m,m \rrbracket^2$ with arbitrary boundary conditions $\xi$. Then there
exist two constants $c_1$ and $c_2$ (independent of $m$ and $\xi$) such that we have 
\begin{equation}
c_1 m^{-1/8} \leq \P^{\xi} (0 \leadsto \partial B_m) \leq c_2 m^{-1/8}.
\end{equation}
\end{lemma}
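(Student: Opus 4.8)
The plan is to read off both bounds from Onsager's explicit computation via the Edwards--Sokal coupling, using Theorem \ref{RSW} (and its Corollary \ref{circuits}) together with FKG and the Domain Markov property to make everything uniform in $\xi$. The input from Onsager \cite{KaufmanOnsager} is that, with $(+)$ boundary conditions on the square $B_m$ of side $2m$, $\E^{+}_{B_m}[\sigma_0]$ decays like $m^{-1/8}$; via Edwards--Sokal this is exactly $\P^1_{B_m}(0\leadsto\partial B_m)$, so we may treat $\P^1_{B_m}(0\leadsto\partial B_m)$ (and likewise the infinite-volume $\P(0\leadsto\partial B_m)$) as known to be of order $m^{-1/8}$. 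The whole point is then to remove the boundary conditions.

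For the upper bound there is nothing to do: the event $\{0\leadsto\partial B_m\}$ is increasing and is realized by open paths lying inside $B_m$, so the monotonicity in boundary conditions Eq.(\ref{extremality}) gives $\P^{\xi}(0\leadsto\partial B_m)\le\P^1_{B_m}(0\leadsto\partial B_m)\le c_2 m^{-1/8}$ for every $\xi$.

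The lower bound is the real content. By Eq.(\ref{extremality}) it suffices to handle $\xi=$ free, i.e. to prove $\P^0_{B_m}(0\leadsto\partial B_m)\ge c_1 m^{-1/8}$; note that monotonicity is of no help here, since it only compares upward to the wired estimate. Instead I would use the quasi-multiplicativity of the one-arm event, which is the standard consequence of Theorem \ref{RSW} (cf. the remark following Proposition \ref{zero-magnetization}). Concretely: by the Domain Markov property and monotonicity, the restriction of $\P^0_{B_m}$ to any sub-annulus stochastically dominates the free-boundary measure there, so Corollary \ref{circuits} and Theorem \ref{RSW} produce, each with probability bounded below by a universal constant, an open circuit around $B_{m/2}$ in a dyadic sub-annulus and an open crossing of $B_m\setminus B_{3m/4}$ joining $\partial B_{3m/4}$ to $\partial B_m$. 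One then builds the connection $0\leadsto\partial B_m$ as a concatenation of boundedly many such RSW building blocks, glued with FKG: an open circuit forced (by planarity) to lie in the cluster of $0$ lets the connection be continued outward from one dyadic scale to the next at bounded multiplicative cost, and the innermost scale is handled by Onsager's estimate, realized in a wired domain obtained by conditioning on a surrounding open circuit. Since only a bounded number of scales is involved, the product of the resulting constants is a positive constant, and $\P^0_{B_m}(0\leadsto\partial B_m)\ge c_1 m^{-1/8}$ follows.

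The main obstacle is exactly this gluing: one must make sure the cluster of $0$ actually hooks onto the successive circuits (and ultimately onto $\partial B_m$) rather than being cut off by a dual circuit, and that neither the free boundary of $B_m$ nor the non-square geometry of the conditioned domains costs more than a constant factor. This is the familiar arm-separation/quasi-multiplicativity mechanism, and it is handled precisely as above by inserting open circuits in intermediate annuli; all the tools needed (RSW crossings, circuits in annuli, FKG, the Domain Markov property, monotonicity in boundary conditions) are available from the previous sections, and no summation over scales — hence no control on the decay of RSW constants — is required.
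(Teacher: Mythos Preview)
Your proposal is correct and follows essentially the same approach as the paper: the upper bound is monotonicity to wired boundary conditions plus Onsager, and the lower bound is obtained by forcing an open circuit in the annulus $B_m\setminus B_{m/2}$ via Corollary~\ref{circuits}, conditioning on it to obtain wired boundary conditions in its interior, and invoking Onsager's $m^{-1/8}$ bound there. Your explicit mention of an additional RSW crossing to hook the circuit onto $\partial B_m$ is a detail the paper leaves implicit, but it is indeed what makes the argument complete.

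One remark: your write-up dresses this up as a quasi-multiplicativity/dyadic-scale argument, but no iteration is actually needed. A \emph{single} circuit in $B_m\setminus B_{m/2}$, plus a \emph{single} radial crossing of that annulus, already does the job: conditionally on the outermost circuit $\Gamma$, the interior carries wired boundary conditions, and since $\mathrm{Int}(\Gamma)\subseteq B_m$, monotonicity gives $\P^1_{\mathrm{Int}(\Gamma)}(0\leadsto\Gamma)\ge\P^1_{B_m}(0\leadsto\partial B_m)$ directly. There is no need to pass ``from one dyadic scale to the next'' or to invoke the general quasi-multiplicativity machinery; the paper's two-line proof reflects this simplicity.
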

\begin{proof}
Onsager's result gives the result with wired boundary conditions (since it is derived in terms
of the Ising model with $(+)$ boundary conditions), so by monotonicity it gives the upper bound. 
Using Theorem \ref{RSW}, we can obtain a lower bound
independent of the boundary conditions by enforcing the existence of a circuit in the annulus
$B_m \setminus B_{m/2}$, and using the FKG inequality. For that, we just need to lower the constant, using monotonicity:
the connection probability conditionally on the fact that there is a wired annulus around the origin is larger than the connection probability with $(+)$ boundary conditions on $\partial B_m$.
\end{proof}

We can now give the result for two-point correlation functions in the infinite-volume Ising model.
\begin{proposition}
Consider the Ising model on $\Z^2$ at critical temperature. There exist two positive constants $C_1$ and $C_2$ such that we have
\begin{equation}
	C_1 |x - y|^{-1/4} \leq \E [ \sigma_x \sigma_y ] \leq C_2 |x - y|^{-1/4},
\end{equation}
where for any $x, y \in \Z^2$, we denote by $\sigma_x$ and $\sigma_y$ the spins at $x$ and $y$.
\end{proposition}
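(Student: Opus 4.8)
The plan is to pass to the FK representation and then paste together two boundary one-arm estimates. By the Edwards--Sokal coupling, at criticality $\E[\sigma_x\sigma_y]=\P(x\leadsto y)$, where $\P$ is the infinite-volume FK Ising measure at $p=p_{\mathrm{sd}}$, unique by Proposition \ref{zero-magnetization}; recall moreover that $\P$ stochastically dominates the free-boundary measure on every finite subgraph and, by the Domain Markov property, conditionally on the configuration outside any finite box is dominated by the wired measure in that box. By translation invariance I may take $x=0$ and set $m=|x-y|$; for bounded $m$ the two-sided bound is immediate (finitely many cases, each a fixed value in $(0,1]$), so assume $m$ large, and write $B_r(z)=z+\llbracket-\lfloor r\rfloor,\lfloor r\rfloor\rrbracket^2$.

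For the \textbf{upper bound}, observe that if $0\leadsto y$ then the connecting open path must exit both $B_{m/3}(0)$ and $B_{m/3}(y)$, which are disjoint; hence $\{0\leadsto y\}\subseteq A\cap A'$ with $A=\{0\leadsto\partial B_{m/3}(0)\}$ and $A'=\{y\leadsto\partial B_{m/3}(y)\}$, two increasing events depending on disjoint sets of edges. Let $\mathcal F_{\mathrm{ext}}$ be the $\sigma$-algebra generated by the edges outside $B_{m/3}(0)$; then $A'\in\mathcal F_{\mathrm{ext}}$, while by the Domain Markov property and monotonicity in boundary conditions $\P(A\mid\mathcal F_{\mathrm{ext}})\le\P^1_{B_{m/3}(0)}(0\leadsto\partial B_{m/3}(0))\le c_2(m/3)^{-1/8}$ by the preceding lemma (applied to a box centred at the origin). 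Since also $\P(A')\le c_2(m/3)^{-1/8}$ by the same lemma, integrating $\mathbb I_{A'}\P(A\mid\mathcal F_{\mathrm{ext}})$ gives $\P(0\leadsto y)\le c_2^2(m/3)^{-1/4}\le C_2 m^{-1/4}$.

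For the \textbf{lower bound}, every event in sight is increasing, so I would work directly under $\P$. By the preceding lemma, $\P(0\leadsto\partial B_{m/3}(0))\ge c_1(m/3)^{-1/8}$ and likewise around $y$. By Theorem \ref{RSW} together with the FKG inequality, exactly as in Corollary \ref{circuits}, there is an open circuit in the annulus $B_{m/4}(0)\setminus B_{m/6}(0)$ with probability bounded below by a universal constant, and similarly around $y$; and a bounded number of applications of Theorem \ref{RSW} to bounded-aspect-ratio rectangles strung along a monotone lattice path from $0$ to $y$, glued with FKG, produces with probability bounded below an open path joining the two annular regions. On the intersection of all these events, the arm from $0$ crosses, hence connects to, any circuit in $B_{m/4}(0)\setminus B_{m/6}(0)$, this circuit is joined through the corridor to the circuit around $y$, and that one is in turn met by the arm from $y$, so $0\leadsto y$. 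By FKG, $\P(0\leadsto y)$ is at least the product of the above probabilities, which is $\ge C_1 m^{-1/4}$.

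The main obstacle is the gluing step in the lower bound: one must lay out the rectangles and their overlap squares so that the chosen crossings are forced to intersect (the classical RSW gluing construction), check that the two extreme rectangles actually hook onto the respective circuits, and ensure that the number of rectangles --- hence the power of $c_-$ that is paid --- stays bounded uniformly in $m$ and in the direction of $y-x$ (this is why one works along a monotone lattice path from $x$ to $y$). All of this is standard once Theorem \ref{RSW} and FKG are available; the final exponent is $\tfrac14=2\cdot\tfrac18$, reflecting that one pastes two one-arm boundary-type events each of order $m^{-1/8}$.
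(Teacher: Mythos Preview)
Your proof is correct and follows essentially the same approach as the paper: pass to FK via Edwards--Sokal, obtain the upper bound by conditioning on the exterior of one box and applying the one-arm lemma twice, and for the lower bound use FKG to glue two one-arm events to an RSW-built connected structure linking the two points. The only cosmetic difference is that the paper packages your two circuits and corridor into a single connected ``figure-8'' in the region $[(x+B_{2m+2})\cup(y+B_{2m+2})]\setminus[(x+B_m)\cup(y+B_m)]$, which sidesteps the hooking issue you flag in your last paragraph but is otherwise the same construction.
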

\begin{proof}

The 2-spin correlation $\E [ \sigma_x \sigma_y ]$ can be expressed, in the corresponding FK representation, as the probability
of the event $\{ x \leadsto y \}$. Let now $m$ be the integer part of $|x-y|/4$.
The upper bound is easy and does not rely on Theorem \ref{RSW}: the event that
$x$ is connected to $y$ implies that $x$ is connected to $x + \partial B_m$ and that $y$ is connected
to $y + \partial B_m$. Using the Domain Markov property, these two events are independent conditionally
on the states on the boundaries of the boxes. Using the previous lemma, we get the upper bound.

Let us turn now to the lower bound. We can enforce the existence of a connected ``$8$'' in the discrete domain 
$$\left[ (x + B_{2m + 2})\cup (y + B_{2m + 2}) \right] \setminus \left[ (x + B_{m})\cup (y + B_{m}) \right]$$
that surrounds both $x$ and $y$ and separates them: this costs only a positive constant $\alpha$, independent of $m$, using Theorem \ref{RSW} in well-chosen rectangles and the FKG inequality. Using once again the FKG inequality, we get that
\begin{equation}
\P ( x \leadsto y ) \geq \alpha \P (x \leadsto x + \partial B_{2m+2}) \cdot \P ( y \leadsto y + \partial B_{2m+2} ),
\end{equation}
and combined with the previous lemma, this yields the desired result.
\end{proof}

\section*{Acknowledgments}

This research was initiated during a semester spent by P.N. at Universit\'e de Gen\`eve, and P.N. would like to thank the mathematics department there for its hospitality, in particular Stanislav Smirnov. P.N.'s visit was made possible by the NSF grant OISE-07-30136. H.D.-C.'s research was supported  by the Marie-Curie grant S16472. This research was also supported in part by the Swiss FNS. H.D.-C. and C.H. are also particularly thankful to Stanislav Smirnov for his constant support during their PhD. Finally, the three authors enjoyed fruitful and stimulating discussions with many people, and they are particularly grateful to Vincent Beffara, Dimitri Chelkak, Geoffrey Grimmett, Charles Newman, Stanislav Smirnov, Yvan Velenik and Wendelin Werner.

\footnotesize

\begin{flushright}
\textsc{D\'epartement de Math\'ematiques}\\
\textsc{Universit\'e de Gen\`eve}\\
\textsc{Gen\`eve, Suisse}\\
\textsc{E-mail: }\texttt{clement.hongler@unige.ch}, \texttt{ hugo.duminil@unige.ch}

\end{flushright}

\medbreak

\begin{flushright}

\textsc{Courant Institute of Mathematical Sciences}\\
\textsc{New York University}\\
\textsc{New York, U.S.A.}\\
\textsc{E-mail: }\texttt{nolin@cims.nyu.edu} 

\end{flushright}

\end{document}